\newcommand{\N}{\mathbb{N}}
\newcommand{\Z}{\mathbb{Z}}
\newcommand{\Q}{\mathbb{Q}}
\newcommand{\R}{\mathbb{R}}
\newcommand{\C}{\mathbb{C}}
\newcommand{\id}{\mathrm{id}}
\newcommand{\A}{\mathbf{A}}
\newcommand{\x}{\mathbf{x}}
\newcommand{\y}{\mathbf{y}}
\newcommand{\Crop}{\mathrm{Crop}}
\newcommand{\FF}{\mathcal{F}}
\newcommand{\ZZ}{\mathcal{Z}}
\newcommand{\Bin}{\mathrm{Bin}}
\newcommand{\YD}{\mathrm{YD}}
\newcommand{\len}{\mathrm{len}}
\newcommand{\hgt}{\mathrm{ht}}
\newcommand{\LL}{\mathbf{L}}
\newcommand{\II}{\mathbf{I}}
\newcommand{\is}{\mathrm{lis}}
\newcommand{\BB}{\mathcal{B}}
\newcommand{\Hom}{\mathrm{Hom}}
\newcommand{\M}{\mathcal{M}}
\newcommand{\ii}{\mathbf{i}}
\newcommand{\jj}{\mathbf{j}}
\newcommand{\calM}{\mathcal{M}}
\newcommand{\calN}{\mathcal{N}}
\def\old@comma{,}
    \old@comma\discretionary{}{}{}%
\newcommand{\scb}{\scalebox}
\newcommand\xof[3][1ex]{\ensurestackMath{%
  \setbox0=\hbox{$#2$}%
  \setbox2=\hbox{$#3$}%
  \kern\wd0\kern-\dimexpr#1\relax%
  \stackinset{r}{#1}{b}{\dimexpr#1-1ex}{\mathrlap{#3}}{%
    \stackinset{l}{#1}{t}{\dimexpr#1-1ex}{\mathllap{#2}}{\bigg/}%
  }%
  \kern-\dimexpr#1\relax\kern\wd2%
}}
\theoremstyle{plain}
\newtheorem{theorem}{Theorem}[section]
\newtheorem{lemma}[theorem]{Lemma}
\newtheorem{proposition}[theorem]{Proposition}
\newtheorem{corollary}[theorem]{Corollary}
\theoremstyle{definition}
\newtheorem{definition}[theorem]{Definition}
\newtheorem{remark}[theorem]{Remark}
\newtheorem{example}[theorem]{Example}
\title{Binomial Cayley Graphs and Applications to Dynamics on Finite Spaces}
\author[Bernat Bassols-Cornudella]{Bernat Bassols-Cornudella$^*$}
\address{Department of Mathematics, Imperial College London, London SW7 2AZ, UK}
\thanks{$^*$Department of Mathematics, Imperial College London, London SW7 2AZ, UK}
\email{\href{mailto:bernat.bassols-cornudella20@imperial.ac.uk}{bernat.bassols-cornudella20@imperial.ac.uk}}
\author[Francesco Viganò]{Francesco Viganò$^{*}$}
\address{Department of Mathematics, Imperial College London, London SW7 2AZ, UK}
\email{\href{mailto:f.vigano21@imperial.ac.uk}{f.vigano21@imperial.ac.uk}}
\thanks{To appear in Algebraic Combinatorics (ISSN: 2589-5486), \url{http://algebraic-combinatorics.org/}}
\thanks{Manuscript submitted 18th May 2023 and accepted 4th February 2024.}
\begin{document}
\begin{abstract}
    \emph{Binomial Cayley graphs} are obtained by considering the binomial coefficient of the weight function of a given Cayley graph and a natural number. We introduce these objects and study two families: one associated with symmetric groups and the other with powers of cyclic groups. We determine various combinatorial properties of these graphs through the spectral analysis of their adjacency matrices. In the case of symmetric groups, we establish a relation between the multiplicity of the null eigenvalue and longest increasing sub-sequences of permutations by means of the RSK correspondence. Finally, we consider dynamical arrangements of finitely many elements in finite spaces, which we refer to as \emph{particle-box systems}. We apply the results obtained on binomial Cayley graphs in order to describe their degeneracy.
\end{abstract}

\maketitle

\section{Introduction}\label{section.introduction}
Let us consider a set of $m$ different boxes and a collection of $n$ labelled particles. On this setup, we play a game against a bot that repeatedly arranges all particles across the different boxes. In general, these particles can be laid out in $m^n$ different configurations, allowing several of them in the same box, or leaving some boxes empty. Before the game begins, the bot fixes once and for all a probability distribution $p$ on the set of all possible $m^n$ configurations and an integer $k\le n$. \\

We play as an external observer interested in the disposition of the $n$ particles. However, on every round the bot only allows us to observe where $k$ particles of our choice have been allocated. That is, the game unfolds over infinitely many independent rounds consisting of:
\begin{enumerate}
    \item We select $k$ of the $n$ particles.
    \item The bot samples an arrangement from $p$ and lays out the $n$ particles as such. 
    \item We are given the location of the $k$ selected particles.
\end{enumerate}

\begin{figure}[htb]
    \centering
    \includegraphics[scale=.45]{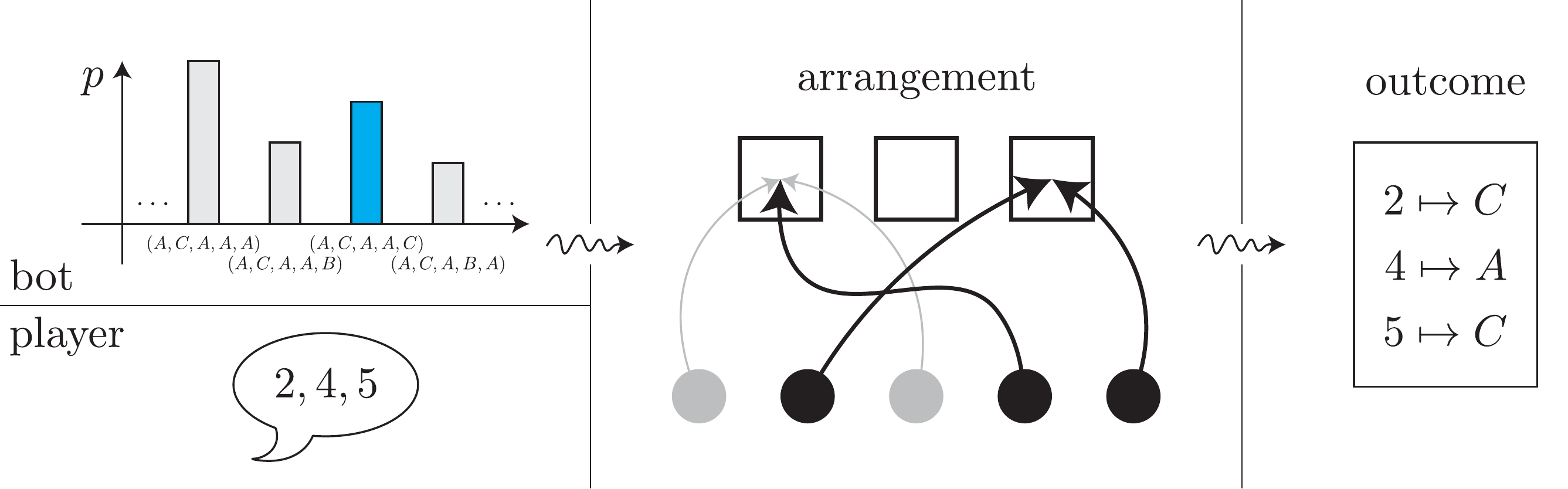}
    \caption{A round of the game: after sampling an arrangement (selected in blue) from $p$, the bot places the particles and reveals the location of those chosen by the player. In this case, $k=3, n=5, m=3$. The player chooses to see particles $2, 4,$ and $5$, that are placed in boxes $C, A$, and $C$, respectively.}
    \label{fig:game_all}
\end{figure}

The round then finishes and a new one begins. We refer to this game as a \emph{particle-box system}. After infinitely many rounds and all different choices of observable particles, by the law of large numbers we know the probability distributions of the bot's moves when arranging any $k$-subset of the $n$ particles, i.e. the $k$-marginal distributions of $p$. We refer to this family of $\binom{n}{k}$ probability distributions as the \emph{$k$-restriction} of $p$ and denote it by $p_{|k}$.\\

We win the game if we are able to recover the original distribution $p$ from its $k$-restriction $p_{|k}$. But is this possible? And if it isn't, how far are we from determining $p$?\\

The work presented herein explores these questions, providing a negative answer to the first: it turns out that we generally cannot win the game. For the second, we note that reducing the number of observed particles from $n$ to $k$ gives rise, in general, to a \emph{degeneracy}: a positive-dimension space of probability distributions on the set of all $m^n$ arrangements resemble $p$ when $k$-restricted to $p_{|k}$. Studying this degeneracy amounts to determining the kernel of the $\left( \binom{n}{k} \cdot m^k \right) \times m^n$ restriction matrix $M$, defined as
\begin{equation}\label{equation.restriction_matrix_Zm^n}
    M_{(\ii, \jj), f} =
    \begin{cases}
        1 &\text{if }  f(i_1, \dots, i_k) = (j_1, \dots, j_k),\\    
        0 &\text{otherwise,}
    \end{cases}
\end{equation}
where we identify the set of boxes with the group $\Z_m = \{0, \dots, m-1\}$ and the possible arrangements of $n$ particles with the maps $f \colon \calN = \{1, \dots, n\} \to \Z_m$.\\

The kernel of $M$ conveniently coincides with the kernel of the $m^n \times m^n$ matrix $A = M^T M$, with entries
\begin{equation*}
    A_{fg} = \binom{\ZZ(g - f)}{k},
\end{equation*}
where $\ZZ(g-f)$ denotes the number of zeros of the map $g-f \colon \calN \to \Z_m$. To our advantage, this matrix can be interpreted as the adjacency matrix of a weighted Cayley graph on the group of maps $\calN \to \Z_m$, isomorphic to $(\Z_m)^n$. Letting $k$ vary, we obtain a family of weighted Cayley graphs.\\

Spectra of (the adjacency matrices of) weighted normal Cayley graphs are completely described in terms of the irreducible characters of their underlying groups (see Theorem \ref{theorem.spectra_weighted_normal_Cayley_graphs}). Using this technique, in Theorem \ref{theorem.eigenvalues_binomial_Cayley_graph_Zm^n} we provide an explicit description of the spectra of weighted Cayley graphs associated with this particle-box system. In particular, the dimension of the kernel of $A$, and equivalently of $M$, is (Corollary \ref{corollary.rank_binomial_Cayley_graphs_Zm^n})
\begin{equation*}
    \sum_{t < n-k} \binom{n}{t} \cdot (m-1)^{n-t}.
\end{equation*}
This provides a closed expression to the recursive formula given in \cite[Theroem 27]{daCosta2022}.\\

We pay particular attention to the case $n = m$, when the system takes the form of the so-called $k$-point motion on finite spaces introduced in \cite{daCosta2022}. In fact, the degeneracy of a particle-box system is linked to the notion of stochastic $n$-point D-bifurcations, whose complexity decreases as more particles $k$ are allowed to be observed. Originally, the study of the $k$-point motion on finite spaces arose from previous results on stochastic flows and particularly on Brownian flows of diffeomorphisms on $\R^d$ given by Baxendale in \cite{Baxendale1984}. In this continuous setting, the analogous of the restriction $p_{|2}$ is sufficient to fully characterise the analogous of $p$ (see e.g. \cite[Chapter 4]{Kunita1990}, and in particular Theorems 4.2.4 and 4.2.5 therein).\\

The jump from $\R^d$ into a discrete state space $\{1, \dots, m\}$ conceptually brings Brownian flows of diffeomorphisms into bijective maps from the space of particles $\calN$ to the space of boxes $\calM$. In fact, since $n = m$, $\calN$ and $\calM$ coincide, and the set of admissible transformations corresponds to the symmetric group $S_m$. Specifically, $p$ is now a probability distribution on $S_m$. In this setting, the 2-point motion no longer uniquely characterises higher order point motions, suggesting a thorough study of the $k$-restriction from $p$ to $p_{|k}$ in the bijective framework.

\begin{figure}[hbt]
    \centering
    \includegraphics[scale=.45]{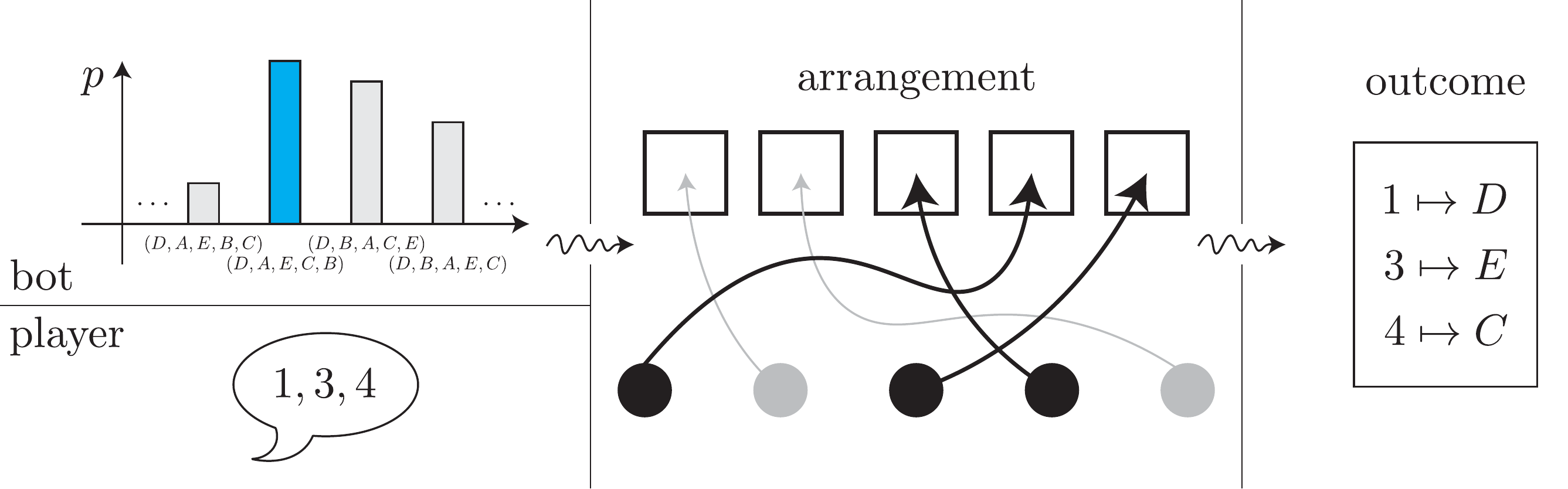}
    \caption{A round of the game in the bijective case: after sampling an arrangement (selected in blue) from $p$, the bot places the particles and reveals reveals the location of those chosen by the player. In this case, $k=3, n=m=5$. The player chooses to see particles $1, 3,$ and $4$, that are placed in boxes $D, E$, and $C$, respectively.}
    \label{fig:game_bij}
\end{figure}

In the bijective case, the restriction matrix $M$ takes the same form as in Equation \eqref{equation.restriction_matrix_Zm^n}. Its size can be reduced to $\left(\binom{m}{k} \cdot \frac{m!}{(m-k)!} \right) \times \binom{m}{k}$, and is indexed as $M_{(\ii, \jj), \sigma}$, where $\sigma \in S_m$. Again, the kernel of $M$ coincides with the kernel of the $m! \times m!$ matrix $A = M^T M$, with entries
\begin{equation*}
    A_{\sigma\tau} = \binom{\FF(\tau\sigma^{-1})}{k},
\end{equation*}
where $\FF(\tau\sigma^{-1})$ denotes the number of fixed points of the permutation $\tau\sigma^{-1} \in S_m$. Analogously, this matrix is the adjacency matrix of a weighted Cayley graph on $S_m$. In Theorem \ref{theorem.eigenvalues_binomial_Cayley_graph_Sm} we determine the spectrum of this graph. As a consequence, the dimension of the kernel of $A$, and equivalently of $M$, is (Corollary \ref{corollary.rank_binomial_Cayley_graphs_Sm})
\begin{equation*}
    \sum_{\substack{\mu \vdash m \\ \mu_1 < m-k}} \chi^\mu(\id)^2,
\end{equation*}
where $\chi^\mu(\id)$ is the degree of the irreducible character associated with $\mu$. The value $\chi^\mu(\id)$ coincides with the number of standard Young tableaux of shape $\mu$. By means of the RSK correspondence, we can rephrase this result as: the dimension of the kernel of $A$ is the number of permutations in $S_m$ with no increasing sub-sequences of length $m-k$ (Corollary \ref{corollary.increasing_sequences}). This also proves the formula for the degeneracy of the $k$-point motion conjectured in \cite[Section 3.3.2]{daCosta2022}.\\

The two families of Cayley graphs presented above are instances of graphs arising from a more general construction. In this work we refer to them as \emph{binomial Cayley graphs}, for which the weight function is obtained by taking the binomial coefficient of an original weight function and a natural number $k$.\\

{\bf Outline.} In Section \ref{section.weighted_normal_Cayley_graphs_and_binomial_Cayley_graphs}, we recall the definition of weighted normal Cayley graphs, state a result on their eigenvalues (Theorem \ref{theorem.spectra_weighted_normal_Cayley_graphs}), and introduce binomial Cayley graphs. In Section \ref{section.binomial_Cayley_graphs_on_Sm}, we study a family of binomial Cayley graphs on symmetric groups and describe their spectrum (Theorem \ref{theorem.eigenvalues_binomial_Cayley_graph_Sm}). We also obtain a link between the dimension of the null eigenvalue to increasing sub-sequences through the RSK correspondence (Corollary \ref{corollary.increasing_sequences}). Section \ref{section.binomial_Cayley_graphs_on_Zm^n} is structured analogously to Section \ref{section.binomial_Cayley_graphs_on_Sm}. We analyse a family of binomial Cayley graphs on powers of cyclic groups and determine their spectrum (Theorem \ref{theorem.eigenvalues_binomial_Cayley_graph_Zm^n}). In Section \ref{section.degeneracies}, we consider particle-box systems and describe their degeneracy (Theorem \ref{thm.degeneracy-k-hom}). As an application of our results on binomial Cayley graphs,  we determine the degeneracy of particle-box systems for specific choices of admissible functions, related to powers of cyclic groups (Corollary \ref{cor.degeneracy_random_maps}) and symmetric groups (Corollary \ref{cor.degeneracy_random_bijections}). We conclude the paper with some final remarks in Section \ref{sec.final}.

\section{Weighted normal Cayley graphs and binomial Cayley graphs}\label{section.weighted_normal_Cayley_graphs_and_binomial_Cayley_graphs}

In this section we introduce weighted normal Cayley graphs and binomial Cayley graphs. We refer to \cite[Section 2]{Babai1979} and \cite[Section 3.7]{Godsil2001} for a general discussion on Cayley graphs.\\

Let $G$ be a finite group and $\omega \colon G \to \C$. The (directed) \emph{weighted Cayley graph} $\Gamma(G, \omega)$ has as set of vertices the elements of $G$ and, for $g, h \in G$, the edge joining $g$ to $h$ has complex weight $\omega(hg^{-1})$ (see Figure \ref{fig:weighted_cayley_graph_1}). Its adjacency matrix $\A(G, \omega)$ is a $|G| \times |G|$ matrix whose $(g, h)$ entry is $\omega(hg^{-1})$. Self-loops are admitted.

\begin{figure}[htb]
    \centering
    \includegraphics[scale=0.38]{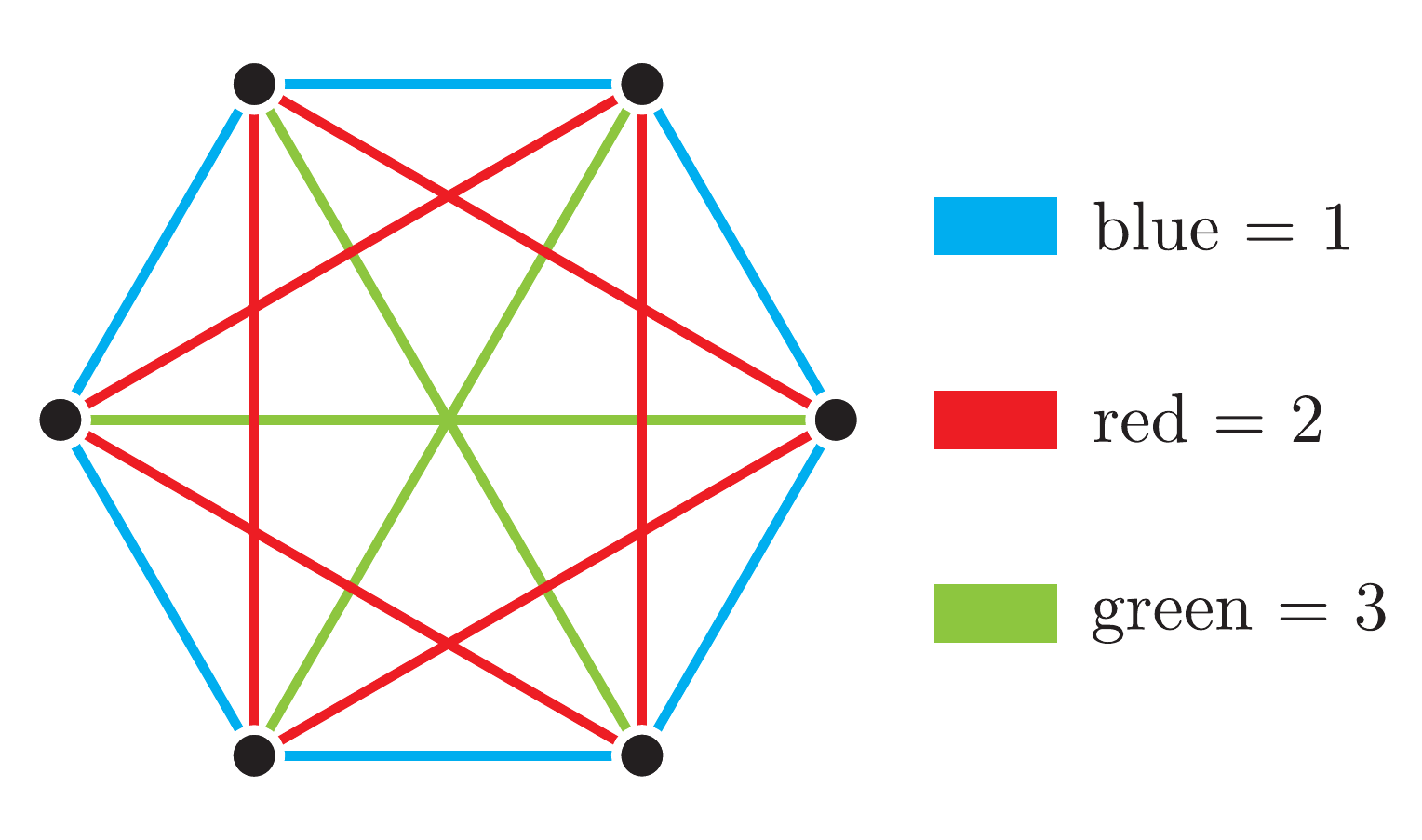}
    \caption{Example of weighted Cayley graph on the cyclic group $\Z_6$. The weight function takes values $\omega(\pm g) = g$ for $g = 0, 1, 2, 3$. The symmetry of $\omega$ makes the graph undirected.}
    \label{fig:weighted_cayley_graph_1}
\end{figure}

The group $G$ acts transitively on the set of nodes of $\Gamma(G, \omega)$ by right multiplication. Each of these node permutations is in fact a graph automorphism since the weight of the edge joining $g$ to $h$ only depends on $hg^{-1}$. It follows that every weighted Cayley graph is regular (that is, each node has the same weighted degree). In particular, the weighted degree $d(G, \omega)$ of each node is
\begin{equation*}
d(G, \omega) = \sum_{g \in G} \omega(g).
\end{equation*}

This paper makes essential use of weight functions. However, we remark that \emph{non-weighted} Cayley graphs can be recovered as particular weighted Cayley graphs. Non-weighted Cayley graphs are defined through a subset $S \subseteq G$. Usually one also assumes that $S$ is invariant under inversion ($S^{-1}=S)$ and that $\id \notin S$ (no self-loops). An edge connects $g$ to $sg$ for every $g \in G$, $s \in S$ (see Figure \ref{fig:simple_cayley_graphs}). This is equivalent to saying that $g$ is connected to $h$ if and only if $hg^{-1} \in S$. A non-weighted Cayley graph can therefore be described by the weight (characteristic) function
\begin{equation*}
    \omega(g) =
    \begin{cases}
        1 & \text{if } g \in S, \\
        0 & \text{if } g \notin S.
    \end{cases}
\end{equation*}

\begin{figure}[htb]
    \centering
    \includegraphics[scale=0.38]{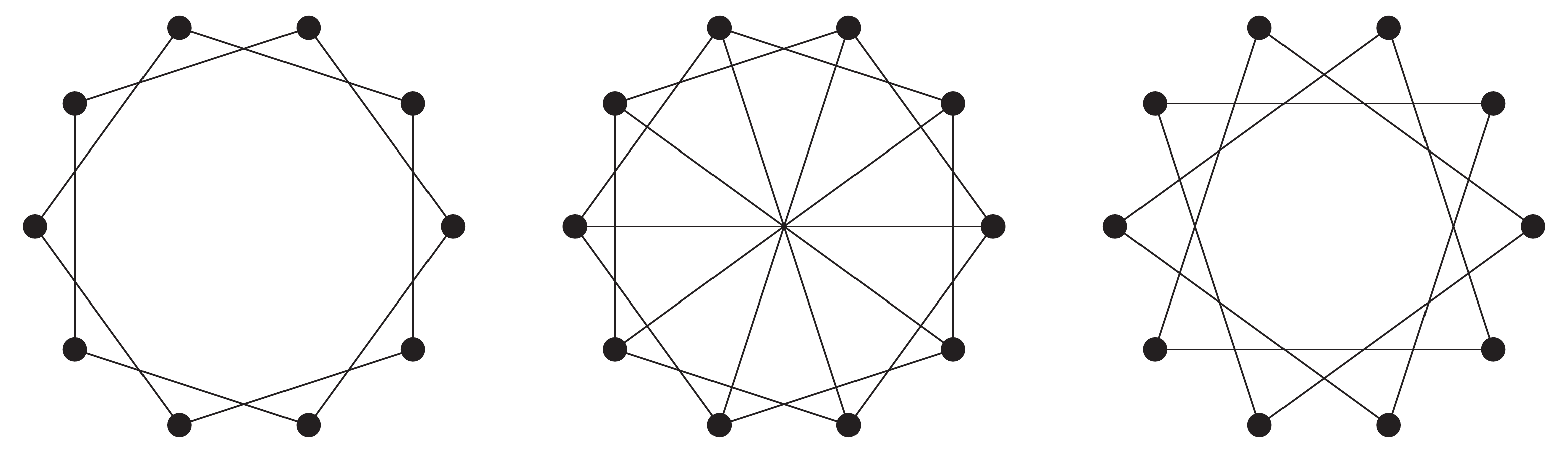}
    \caption{Examples of undirected non-weighted Cayley graphs: the weight function evaluates $1$ on a subset $S \subseteq G$. In this case, $G$ is the cyclic group $\Z_{10}$. From left to right, $S=\{ \pm 2 \}$, $S=\{ \pm 2 , \pm 5 \}$, and $S=\{ \pm 3 \}$. The left graph is not connected, as $\{ \pm 2 \}$ is not a set of generator of $\Z_{10}$. The middle and right graphs are connected, as $\{ \pm 2 , \pm 5 \}$ and $\{ \pm 3 \}$ both generate $\Z_{10}$.}
    \label{fig:simple_cayley_graphs}
\end{figure}

Given a weighted Cayley graph $\Gamma(G, \omega)$ and a $k \in \N$, we define the \emph{binomial Cayley graph} $\Gamma_\Bin(G, \omega, k)$ as $\Gamma(G, \omega_k)$, where $\omega_k$ is defined through the (generalised) binomial coefficient
\begin{equation*}\label{equation.omega_k}
\omega_k(g) = \binom{\omega(g)}{k} = \frac{\omega(g) \cdot \left(\omega(g) - 1\right) \cdots \left( \omega(g) - k + 1 \right)}{k!}.
\end{equation*}
In particular, for $k=1$ we recover the original Cayley graph $\Gamma_\Bin(G, \omega, 1) = \Gamma(G, \omega)$. For $k=0$, $\Gamma_\Bin(G, \omega, 0)$ is the complete graph with self-loops (all edges have equal weight $1$). The graph becomes sparser as $k$ increases, up until no edges are left when $k$ exceeds the maximal value of $\omega$ (see Figure \ref{fig.weighted_cayley_graphs}).

\begin{figure}[htb]
    \centering
    \includegraphics[scale=0.38]{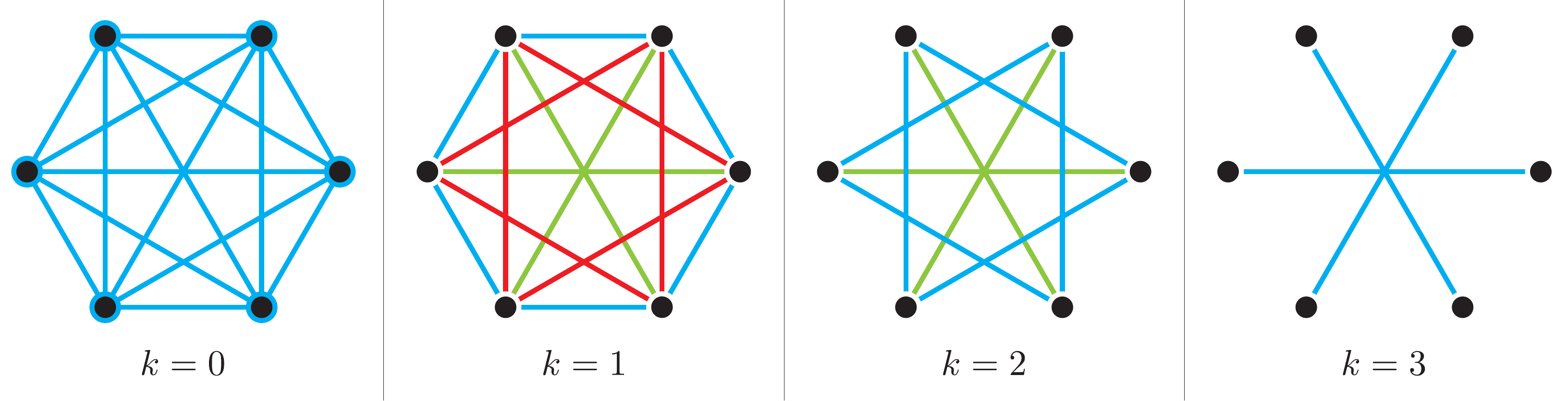}
    \caption{Examples of binomial Cayley graphs obtained from the weighted Cayley graph in Figure \ref{fig:weighted_cayley_graph_1}. Colour encircling nodes represents self-loops. On each edge, the change in colour from left to right follows the binomial coefficient.}
    \label{fig.weighted_cayley_graphs}
\end{figure}

Despite the generality of this construction, in this paper we assume that:
\begin{enumerate}
\item $\omega$ takes values in $\N = \{0, 1, \dots\}$, so that $\A(G, \omega)$ has non-negative integer entries. In particular, $\omega$ is real-valued, or equivalently invariant under complex conjugation.
\item For all $g \in G$, $\omega(g^{-1}) = \omega(g)$, that is, $\omega$ is \emph{invariant under inversion}. This implies that $\A(G, \omega)$ is symmetric and that $\Gamma(G, \omega)$ is an undirected graph.
\item For all $g, h \in G$, $\omega(h^{-1}gh) = \omega(g)$, that is, $\omega$ is a \emph{class function}, or equivalently $\omega$ is invariant under group conjugation.
\end{enumerate}
Under these conditions, we refer to $\Gamma(G, \omega)$ as a weighted \emph{normal} Cayley graph. For instance, the graph in Figure \ref{fig:weighted_cayley_graph_1} is normal. It is worth mentioning that hypothesis (3) implies that the action of $G$ by left multiplication on the nodes of $\Gamma(G, \omega)$ induces graph automorphisms as well.\\

Given a weighted normal Cayley graph $\Gamma(G, \omega)$ and a $k \in \N$, the binomial Cayley graph $\Gamma_\Bin(G, \omega, k)$ is itself a weighted normal Cayley graph. We underline that, by hypothesis (1), the generalised binomial $\binom{\omega(g)}{k}$ is in fact a standard binomial. In particular, $\binom{\omega(g)}{k} = 0$ if $\omega(g) < k$. We also denote by $\A_\Bin(G, \omega, k)$ the adjacency matrix of the binomial Cayley graph $\Gamma_\Bin(G, \omega, k)$. Similarly, we denote by $d_\Bin(G, \omega, k)$ the weighted degree of each of the nodes of $\Gamma_\Bin(G, \omega, k)$. \\

We state here an elegant theorem, fundamental to our discussion, describing the spectra of weighted normal Cayley graphs in terms of the irreducible characters of the group $G$. The theorem descends as a corollary from a more involved result, where the hypothesis of normality is dropped, and which appeared for the first time in \cite[Theorem 3.1]{Babai1979} (see also \cite[Theorem 3]{Diaconis1981}). For completion, in Section \ref{sec.eigenvectors_binomial} we discuss eigenvectors of weighted normal Cayley graphs, for which we refer to  \cite[Section 1]{Rockmore2002} and \cite[Section 4]{Roichman1999}. \\

We recall the standard inner product on the space $\mathscr{F}(G, \C)$ of $\C$-valued functions on $G$. Given $\alpha, \beta \colon G \to \C$, we define
\begin{equation*}\label{equation.inner_product}
\langle \alpha, \beta \rangle = \frac{1}{|G|} \sum_{g \in G} \alpha(g) \overline{\beta(g)}.
\end{equation*}

\begin{theorem}\label{theorem.spectra_weighted_normal_Cayley_graphs}
Assume that $\Gamma(G, \omega)$ is a weighted normal Cayley graph. Then, for every irreducible character $\chi$ of $G$, the value
\begin{equation*}\label{equation.weighted_normal_Cayley_eigenvalue}
\lambda_\chi = \frac{1}{\chi(\id)} \sum_{g \in G} \omega(g) \chi(g) = \frac{|G|}{\chi(\id)} \langle \omega, \overline{\chi}
\rangle
\end{equation*}
is an eigenvalue of $\A(G, \omega)$. Its contribute in multiplicity is $\chi(\id)^2$.
\end{theorem}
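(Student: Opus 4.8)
The plan is to identify $\A(G,\omega)$ with the operator by which a single \emph{central} element of the group algebra $\C[G]$ acts in the regular representation, and then to read off the spectrum from Schur's lemma together with the Wedderburn decomposition of $\C[G]$.

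First I would record the matrix identity $\A(G,\omega) = R(a)$, where $a := \sum_{s\in G}\omega(s)\,s\in\C[G]$ and $R$ denotes the right regular representation ($R(s)e_g = e_{gs^{-1}}$), extended linearly to $\C[G]$. Indeed the $(g,h)$-entry of $R(a)$ is $\sum_{s}\omega(s)\,[\,h = gs\,] = \omega(g^{-1}h)$, and since $\omega$ is a class function (hypothesis (3)) and $g(g^{-1}h)g^{-1} = hg^{-1}$, this equals $\omega(hg^{-1})$, which is precisely $\A(G,\omega)_{g,h}$. Using hypothesis (3) once more, $a$ is central in $\C[G]$: $t\,a\,t^{-1} = \sum_s\omega(s)\,tst^{-1} = \sum_s\omega(t^{-1}st)\,s = a$ for every $t\in G$.

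Now I would decompose the regular representation into its isotypic components, $\C[G]\cong\bigoplus_\chi V_\chi^{\oplus\chi(\id)}$, the sum over the irreducible characters $\chi$ of $G$, with $V_\chi$ the corresponding simple module of dimension $\dim V_\chi = \chi(\id)$. Since $a$ is central, Schur's lemma forces $R(a)$ to act on each copy of $V_\chi$ as a scalar $\lambda_\chi$; taking the trace on one such copy yields $\chi(\id)\,\lambda_\chi = \sum_{s\in G}\omega(s)\chi(s)$, i.e. $\lambda_\chi = \tfrac1{\chi(\id)}\sum_{s}\omega(s)\chi(s) = \tfrac{|G|}{\chi(\id)}\langle\omega,\overline\chi\rangle$. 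Hence $\A(G,\omega)$ is diagonalisable with $\lambda_\chi$ an eigenvalue of multiplicity (number of copies of $V_\chi$)$\,\times\,\dim V_\chi = \chi(\id)^2$; if two characters happen to produce the same value the multiplicities simply add. That $\A(G,\omega)$ is diagonalisable over $\R$ is in fact automatic from hypotheses (1)--(2), which make it a real symmetric matrix; independently, $\sum_s\omega(s)\chi(s)$ is seen to be real by the substitution $s\mapsto s^{-1}$ together with $\omega(s^{-1})=\omega(s)$.

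The only genuinely delicate point is the bookkeeping in the last step: it is crucial that the ambient representation is the \emph{regular} one, so that each simple module occurs with multiplicity exactly its own dimension $\chi(\id)$; this is what promotes the bare multiplicity $\chi(\id)$ of $\lambda_\chi$ on a single copy of $V_\chi$ to the stated $\chi(\id)^2$. Everything else is routine. Alternatively, the statement can be deduced as the normal-case specialisation of the spectral description of arbitrary (not necessarily normal) weighted Cayley graphs in \cite[Theorem 3.1]{Babai1979}, the computation above being essentially the content of that specialisation.
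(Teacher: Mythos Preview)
Your proof is correct. Note, however, that the paper does not give its own proof of this theorem: it is stated as a known result, attributed to \cite[Theorem~3.1]{Babai1979} (with \cite[Theorem~3]{Diaconis1981} as an alternative reference), and used as a black box thereafter. Your argument via the central element $a=\sum_s\omega(s)s$ in $\C[G]$, Schur's lemma, and the isotypic decomposition of the regular representation is the standard one and is essentially what those references (particularly Diaconis--Shahshahani) do in the normal case; you have also correctly identified that the class-function hypothesis is what makes $a$ central and hence makes Schur's lemma apply directly. One small remark: in Section~\ref{sec.eigenvectors_binomial} the paper works with the left regular representation on $\mathscr{F}(G,\C)$ rather than your right regular representation on $\C[G]$, but the two are equivalent and your computation of the $(g,h)$-entry goes through either way.
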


\begin{remark}\label{remark.multiplicity_eigenvalue}
Different characters $\chi$ may induce the same eigenvalue $\lambda_\chi$. More explicitly, given a real number $\lambda$, the multiplicity of $\lambda$ as an eigenvalue of $\A(G, \omega)$ is $\sum_{\chi \colon \lambda_\chi = \lambda} \chi(\id)^2$.
\end{remark}

Having introduced the binomial construction of $\omega_k$, a natural question to investigate is whether there exists a formula that relates the spectrum of $\Gamma_\Bin(G, \omega, k)$ to that of $\Gamma(G, \omega)$. We present this in Section \ref{sec.ugly_formula_general_eigenvalues_binomial}, and already point out that this expression does not seem to be easily tractable in general. This supports the specialised analysis of two families of binomial Cayley graphs, which we expose in Sections \ref{section.binomial_Cayley_graphs_on_Sm} and \ref{section.binomial_Cayley_graphs_on_Zm^n}.

\section{Binomial Cayley graphs on symmetric groups}\label{section.binomial_Cayley_graphs_on_Sm}

Let $S_m$ be the symmetric group on $m$ elements $\{1, \dots, m\}$ and consider the weight function $\FF \colon S_m \to \N$, which gives the number of fixed points of a permutation (see Figure \ref{fig:sym_cayley_graph_1}). This induces, for every $k \in \N$, the weight function $\FF_k \colon S_m \to \N$, defined as
\begin{equation*}\label{equation.omega_k_Sm}
\FF_k(\sigma) = \binom{\FF(\sigma)}{k}.
\end{equation*}

As the weight function $\FF$ takes values in $\N$, is invariant under inversion and is a class function, we obtain a family of associated weighted normal binomial Cayley graphs $\Gamma_\Bin(S_m, \FF, k)$, for $k \in \N$ (see Figure \ref{fig:sym_cayley_graphs}).

\begin{figure}[htb]
    \centering
    \includegraphics[scale=0.45]{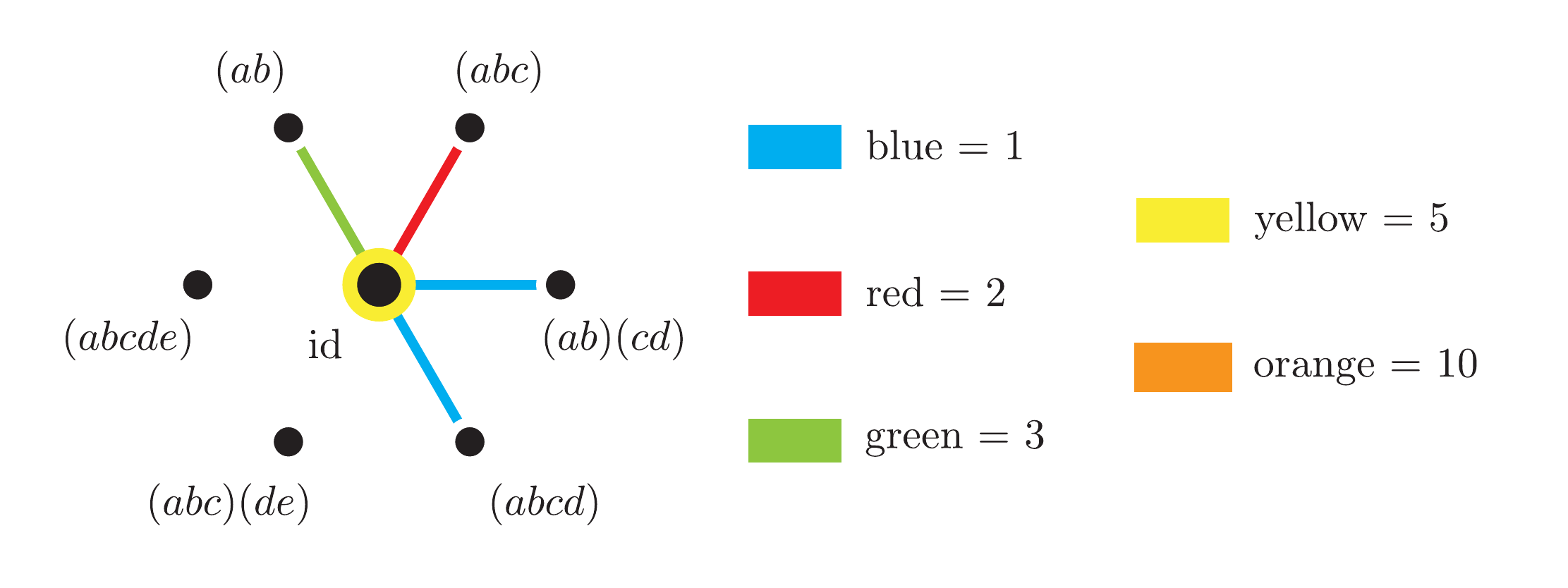}
    \caption{The neighbourhood of the identity of the weighted Cayley graph on the symmetric group $S_5$, induced by the weight function $\FF$. Colour encircling nodes represents self-loops.}
    \label{fig:sym_cayley_graph_1}
\end{figure}

\begin{figure}[htb]
    \centering
    \includegraphics[scale=0.45]{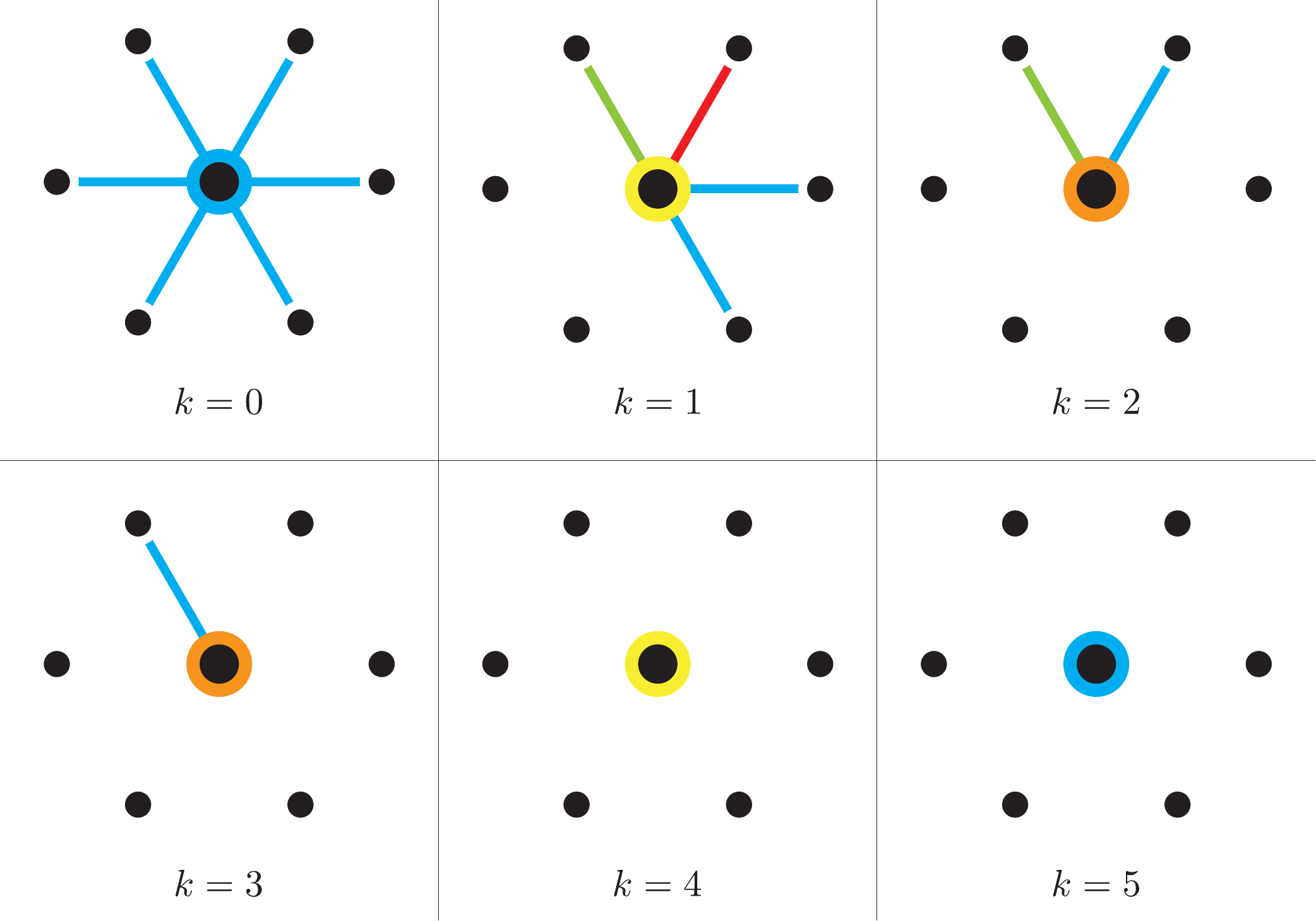}
    \caption{The neighbourhood of the identity of the binomial Cayley graphs on $S_5$, induced by the weight function $\FF$. Colour encircling nodes represents self-loops.}
    \label{fig:sym_cayley_graphs}
\end{figure}

\subsection{Young diagrams, characters of symmetric groups and Crop}\label{section.characters_Sm}

In this section, we recall some facts on the characters of the symmetric group $S_m$ and define $\Crop(\mu, k)$ for a partition $\mu$ of $m$. For a discussion on Young diagrams and characters of symmetric groups, we refer the reader to \cite{Roichman1999}.\\

A \emph{partition} $\mu$ of $m$, denoted by $\mu \vdash m$, is an $\ell$-uple $(\mu_1, \dots, \mu_\ell)$ formed by positive integers satisfying $\mu_1 \ge \dots \ge \mu_\ell$ and $\mu_1 + \dots \mu_\ell = m$. Given a {partition} $\mu \vdash m$, we can consider its {associated} \emph{Young diagram}, namely
\begin{equation*}\label{equation.Young_diagram}
\YD(\mu) = \Set{(i, j) \in \Z \times \Z \ |\ 1 \le i \le \ell \text{ and } 1 \le j \le \mu_i}.
\end{equation*}
We can think of $\YD(\mu)$ as a shape made by $\ell$ left-justified rows of squares of length $\mu_1, \dots, \mu_\ell$, respectively. We refer to the partition $\mu = (\mu_1, \dots, \mu_\ell)$ as the \emph{shape} of the Young diagram.\\

The \emph{boundary} of a Young diagram is the set of squares $(i, j)$ for which $(i+1, j+1)$ does not belong to the Young diagram. A \emph{rim hook} $\tau$ is a connected part of the boundary of a Young diagram which can be removed to leave either a proper Young diagram, or the empty Young diagram (see Figure \ref{fig:young}). The \emph{length} $\len(\tau)$ of a rim hook is the number of squares included in the rim hook. The \emph{height} $\hgt(\tau)$ of a rim hook is one less of the number of rows involved in the rim hook (so that $0 \le \hgt(\tau) \le \ell - 1$ holds for $\mu \vdash m$, $\mu = (\mu_1, \dots, \mu_\ell)$).

\begin{figure}[htb]
    \centering
    \includegraphics[scale=0.2]{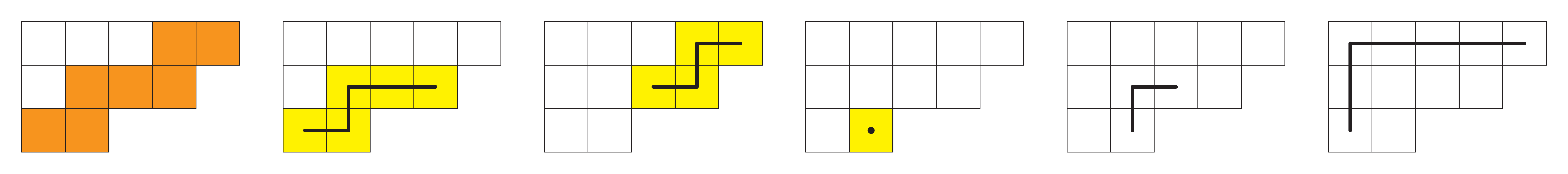}
    \caption{The Young diagram associated with the partition $\mu=(5, 4, 2)$ of $m=11$. From the left: the boundary of the diagram, three rim hooks (of length $5, 4,$ and $1$, and heights $1, 1,$ and $0$, respectively), and two examples of non-rim hooks.}
    \label{fig:young}
\end{figure}

Each partition $\mu$ induces a distinct irreducible character $\chi^\mu$. Moreover, this association is a bijection between the set of partitions of $m$ and the irreducible characters of $S_m$. The value of $\chi^\mu$ on a permutation $\sigma \in S_m$ can be computed using the recursive version of the Murnaghan-Nakayama rule \cite[Section 7.17]{Stanley1999}:
\begin{theorem}\label{theorem.MNrule}
Assume that $\sigma \in S_m$ is the disjoint product $\sigma = \gamma\pi$ of a cycle $\gamma$ permuting $d$ elements and a permutation $\pi$ fixing the same $d$ elements. Then,
\begin{equation*}\label{equation.murnaghan-nakayama}
\chi^\mu(\sigma) = \sum_{\substack{\tau \text{ rim hook of } \YD(\mu) \\ \text{ with } \len(\tau) = d}} (-1)^{\hgt(\tau)} \chi^{\mu \setminus \tau} (\pi).
\end{equation*}
Here $\mu \setminus \tau$ denotes the Young diagram obtained by removing $\tau$ from $\mu$.
\end{theorem}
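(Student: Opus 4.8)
The plan is to route through the theory of symmetric functions and the Frobenius characteristic map, which converts the identity into a statement about multiplying a Schur function by a power-sum symmetric function. I would work in the ring $\Lambda$ of symmetric functions with the Hall inner product $\langle\cdot,\cdot\rangle$, for which the Schur functions $\{s_\lambda\}$ form an orthonormal basis and the power sums satisfy $\langle p_\rho, p_{\rho'}\rangle = z_\rho\,\delta_{\rho\rho'}$. The starting point is the classical Frobenius formula $p_\rho = \sum_{\lambda\vdash m}\chi^\lambda(\rho)\,s_\lambda$, equivalently $\chi^\lambda(\rho)=\langle p_\rho, s_\lambda\rangle$, where $\chi^\lambda(\rho)$ denotes the value of $\chi^\lambda$ on the class of cycle type $\rho$; I would take this, together with the basic character theory of $S_m$, as known.

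First I would translate the hypothesis. Since $\sigma=\gamma\pi$ with $\gamma$ a $d$-cycle on a $d$-element set $T$ and $\pi$ fixing $T$ pointwise, the cycle type of $\sigma$ is $\rho=(d)\cup\rho'$, where $\rho'$ is the cycle type of $\pi$ on the remaining $m-d$ points. Hence $p_\rho=p_d\cdot p_{\rho'}$ and, by the Frobenius formula,
\[
\chi^\mu(\sigma)\;=\;\langle p_d\, p_{\rho'},\, s_\mu\rangle\;=\;\langle p_{\rho'},\, p_d^{\perp} s_\mu\rangle,
\]
where $p_d^{\perp}$ is the adjoint of multiplication by $p_d$. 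The crux is then the lemma, whose case $d=1$ is the branching rule: $p_d^{\perp} s_\mu=\sum_{\tau}(-1)^{\hgt(\tau)} s_{\mu\setminus\tau}$, the sum being over rim hooks $\tau$ of $\YD(\mu)$ with $\len(\tau)=d$. Granting this, I would finish by writing $\chi^\mu(\sigma)=\sum_\tau(-1)^{\hgt(\tau)}\langle p_{\rho'}, s_{\mu\setminus\tau}\rangle=\sum_\tau(-1)^{\hgt(\tau)}\chi^{\mu\setminus\tau}(\rho')$, and noting that $\chi^{\mu\setminus\tau}(\rho')=\chi^{\mu\setminus\tau}(\pi)$ since $\pi$, restricted to its non-fixed points, has cycle type $\rho'$; this is exactly the stated formula.

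To prove the lemma I would use the Jacobi--Trudi identity $s_\mu=\det\big(h_{\mu_i-i+j}\big)_{1\le i,j\le\ell}$ together with the fact that $p_d^{\perp}$ acts on complete homogeneous symmetric functions by $p_d^{\perp} h_n=h_{n-d}$ (with $h_n=0$ for $n<0$). This last fact follows by differentiating $\sum_n h_n t^n=\exp\big(\sum_r p_r t^r/r\big)$ and using the description of $p_d^{\perp}$ as the derivation $d\,\partial/\partial p_d$ in power-sum coordinates. Since $p_d^{\perp}$ is a derivation, applying it to the Jacobi--Trudi determinant yields, by multilinearity and the Leibniz rule, a sum over rows $i$ of the determinant obtained by replacing the $i$-th row $h_{\mu_i-i+j}$ by $h_{\mu_i-i-d+j}$; a sign- and row-reordering analysis then identifies the surviving terms with rim-hook removals of length $d$, the sign $(-1)^{\hgt(\tau)}$ emerging from the permutation needed to restore the rows to Jacobi--Trudi order.

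\emph{This determinantal bookkeeping --- matching the shifted-row determinants with border strips and pinning down the height sign --- is the main obstacle.} Alternatively, one can prove the lemma via the Pieri rule applied to the expansion $p_d=\sum_{r=0}^{d-1}(-1)^r s_{(d-r,1^r)}$ and a telescoping of the resulting Littlewood--Richardson coefficients, which trades the determinant manipulation for a (still delicate) cancellation argument; either route isolates the same combinatorial content, after which the symmetric-function translation above does the rest.
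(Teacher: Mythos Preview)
The paper does not prove this theorem at all: it is stated as the classical recursive Murnaghan--Nakayama rule and simply cited from \cite[Section 7.17]{Stanley1999}. So there is no ``paper's own proof'' to compare against.

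Your sketch is the standard symmetric-function proof, and in fact is essentially the argument given in the reference the paper cites: pass through the Frobenius characteristic to turn $\chi^\mu(\sigma)$ into $\langle p_d\,p_{\rho'}, s_\mu\rangle$, peel off $p_d$ via the adjoint $p_d^{\perp}$, and then identify $p_d^{\perp}s_\mu$ with a signed sum over border-strip removals using Jacobi--Trudi. The steps you outline are correct, including the description of $p_d^{\perp}$ as $d\,\partial/\partial p_d$ and its action $p_d^{\perp}h_n=h_{n-d}$. You are also right that the only genuinely delicate point is the row-reordering/sign analysis that matches the shifted Jacobi--Trudi determinants with rim hooks of length $d$ and produces the factor $(-1)^{\hgt(\tau)}$; once that is done cleanly, the rest is formal. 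Your alternative via $p_d=\sum_{r=0}^{d-1}(-1)^r s_{(d-r,1^r)}$ and Pieri is also a legitimate route. In short: nothing is missing conceptually, and since the paper treats the result as a black box, supplying this argument goes beyond what the paper itself does.
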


\begin{definition}
    Given $\mu \vdash m$, and for an integer $k$ satisfying $0 \le k \le m$, we define $\Crop(\mu, k)$ to be the number of ways in which the Young diagram of $\mu$ can be reduced to a single-row Young diagram of length $m-k$ through the progressive removal of $k$ rim hooks of length $1$.
\end{definition}
Notice that the single-row Young diagram of length $m-k$ is the Young diagram of the trivial partition $(m-k)$ of $m-k$.

\begin{example}
    If $m=5$ and $\mu = (2, 2, 1)$, then:
    \begin{enumerate}[label = \roman*)]
        \item $\Crop(\mu, k) = 0$ for $k=0, 1, 2$,
        \item $\Crop(\mu, k) = 2$ for $k=3$,
        \item $\Crop(\mu, k) = 5$ for $k = 4, 5$.
    \end{enumerate}
\end{example}

\begin{lemma}\label{lemma.crop_0}
$\Crop(\mu, k) = 0$ if and only if all the rows of $\YD(\mu)$ have length lower than $m-k$, or equivalently the first row of $\YD(\mu)$ has length lower than $m-k$. Moreover, $\Crop(\mu, k) \le \Crop(\mu, m) = \chi^\mu(\id)$.
\end{lemma}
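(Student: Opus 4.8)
The plan is to establish the three claims separately, in increasing order of difficulty.

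First I would prove the characterization of when $\Crop(\mu,k)=0$. The equivalence of "all rows have length $<m-k$" and "the first row has length $<m-k$" is immediate, since $\mu_1 \ge \mu_i$ for all $i$. For the main equivalence, I would argue by tracking the first row during the cropping process. Removing a rim hook of length $1$ from $\YD(\mu)$ deletes a single corner box; such a removal can decrease $\mu_1$ by at most $1$, and only when the removed corner lies in the first row. Hence after removing $k$ rim hooks of length $1$, the length of the first row has decreased by at most $k$. So if $\mu_1 < m-k$ at the start, we can never reach a first row of length $m-k$, giving $\Crop(\mu,k)=0$. Conversely, if $\mu_1 \ge m-k$, I would exhibit at least one valid cropping sequence: since $\mu \vdash m$, the total number of boxes outside the first row is $m-\mu_1 \le k$, and I can first strip all $m-\mu_1$ of those boxes one at a time (always removing a removable corner not in the first row — such a corner exists as long as there is a second row), reaching the single-row diagram $(\mu_1)$, and then remove the remaining $k-(m-\mu_1) = \mu_1-(m-k) \ge 0$ boxes from the end of the first row, arriving at $(m-k)$. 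Each single-box removal is a valid rim hook removal leaving a proper (or empty) Young diagram, so this sequence is counted by $\Crop(\mu,k)$, whence $\Crop(\mu,k)\ge 1$.

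Next, the inequality $\Crop(\mu,k)\le\Crop(\mu,m)$. Every length-$k$ cropping sequence from $\mu$ to $(m-k)$ can be extended to a length-$m$ cropping sequence from $\mu$ to the empty diagram by appending the unique sequence of $m-k$ single-box removals that strips the row $(m-k)$ from right to left. This gives an injection from the set counted by $\Crop(\mu,k)$ into the set counted by $\Crop(\mu,m)$, so the inequality follows. (Injectivity is clear: the original sequence is recovered as the first $k$ steps.)

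Finally, $\Crop(\mu,m)=\chi^\mu(\id)$. Here I would apply the Murnaghan--Nakayama rule (Theorem \ref{theorem.MNrule}) iteratively to compute $\chi^\mu(\id)$, using that $\id \in S_m$ is a product of $m$ disjoint $1$-cycles. Peeling off one $1$-cycle at a time, each application of the rule expands $\chi^\nu(\id_{|\nu|})$ as a signed sum over rim hooks of length $1$ — but a rim hook of length $1$ is a single box, always of height $0$, so every sign is $+1$ and the sum is simply over all removable corners. Unwinding the full recursion, $\chi^\mu(\id)$ equals the number of ways to remove all $m$ boxes of $\YD(\mu)$ one at a time, each step leaving a valid Young diagram — i.e. the number of standard Young tableaux of shape $\mu$. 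The subtlety to address is that the Definition of $\Crop(\mu,m)$ insists the intermediate shapes pass through the single-row diagram of length $0$, namely the empty diagram; but when $k=m$ the "single-row Young diagram of length $m-k=0$" is exactly the empty diagram, so no constraint beyond "remove all $m$ boxes one at a time, staying a Young diagram" is imposed, and the two counts coincide. I expect this last identification — matching the bookkeeping in the definition of $\Crop$ with the MN recursion and with standard Young tableaux — to be the main point requiring care, though it is conceptually routine once the rim-hook-of-length-$1$ observation (height $0$, sign $+1$) is in place.
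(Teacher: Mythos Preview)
Your proposal is correct and follows essentially the same route as the paper: an explicit cropping sequence when $\mu_1 \ge m-k$, the unique extension to the empty diagram for the inequality $\Crop(\mu,k)\le\Crop(\mu,m)$, and iterated Murnaghan--Nakayama (length-$1$ rim hooks have height $0$) for $\Crop(\mu,m)=\chi^\mu(\id)$. One small slip to fix: in the direction $\mu_1 < m-k \Rightarrow \Crop(\mu,k)=0$, the bound ``the first row has decreased by at most $k$'' points the wrong way and does not yield the conclusion; what you actually need is the trivial observation that row lengths are monotone non-increasing under single-box removal, so the first row stays strictly below $m-k$ throughout and can never equal $m-k$.
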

\begin{proof}
For the first statement, we notice that if $\mu_1 \ge m-k$, there is at least one way to reduce $\YD(\mu)$ to a single-row Young diagram of length $m-k$ (for instance, by always removing the right-most square of the {lowest} row available). Conversely, a single-row Young diagram of length $m-k$ is not reachable if $\mu_1 < m-k$. For the second statement, $\Crop(\mu, k) \le \Crop(\mu, m)$ as every way of reducing $\mu$ to a single-row Young diagram of length $m-k$ extends uniquely to a different way of reducing $\mu$ to the empty Young diagram. Finally, $\Crop(\mu, m) = \chi^\mu(\id)$ follows directly from the iterative application of the Murnaghan-Nakayama rule (Theorem \ref{theorem.MNrule}).
\end{proof}

We also recall the notion of \emph{Young tableau} (see Figure \ref{fig:young_tableau}). A Young tableau is a filling of a Young diagram of shape $\mu \vdash m$ by the numbers $\{ 1, \dots, m \}$, each appearing exactly once. We say that a Young tableau is \emph{standard} if all rows and all columns present numbers in increasing order.

\begin{figure}[htb]
    \centering
    \includegraphics[scale=0.3]{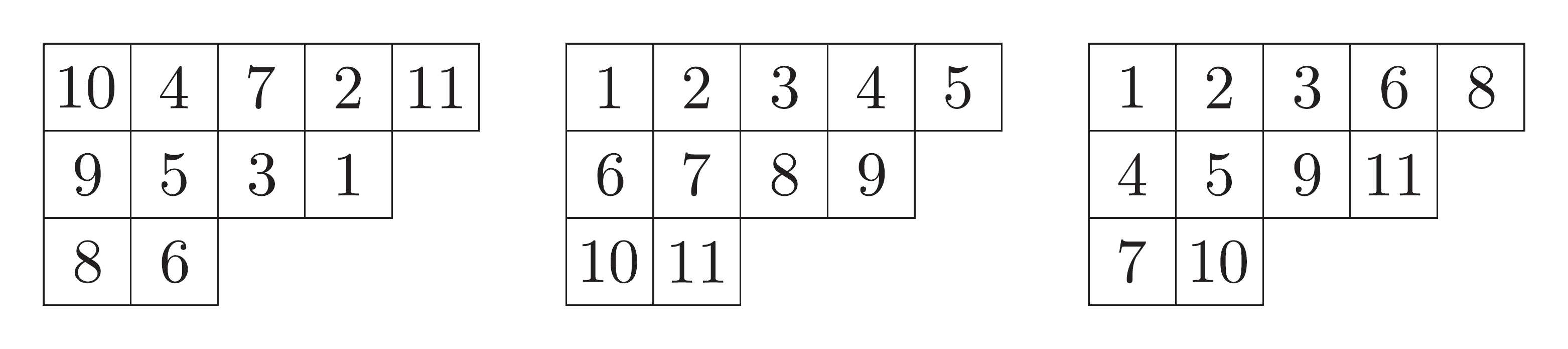}
    \caption{Examples of Young tableaux for the partition $\mu=(5, 4, 2)$ of $m=11$. On the left, a non-standard Young tableau. In the centre and on the right, two standard Young tableaux.}
    \label{fig:young_tableau}
\end{figure}

A consequence of the Murnaghan-Nakayama rule (Theorem \ref{theorem.MNrule}) is the following well-known fact on the number of standard Young tableaux (see for example \cite[Fact 7.6, ii]{Macdonald1995}).
\begin{proposition}\label{proposition.number_standard_young_tableaux}
    Let $\mu \vdash m$ and consider its associated character $\chi^\mu$. Then, the number of standard Young tableaux with shape $\mu$ is $\chi^\mu(\id)$.
\end{proposition}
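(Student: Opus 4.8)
The plan is to read the statement off the iterated Murnaghan--Nakayama rule (Theorem~\ref{theorem.MNrule}) together with Lemma~\ref{lemma.crop_0}. Evaluating $\chi^\mu$ at $\id \in S_m$, we write $\id$ as the disjoint product of $m$ one-cycles. A single pass of Theorem~\ref{theorem.MNrule} then rewrites $\chi^\mu(\id)$ as a sum over rim hooks $\tau$ of $\YD(\mu)$ with $\len(\tau)=1$; each such $\tau$ is a single removable corner square, lies in one row, hence $\hgt(\tau)=0$ and the sign $(-1)^{\hgt(\tau)}$ is $1$. Iterating $m$ times yields $\chi^\mu(\id)=\Crop(\mu,m)$, the number of ways to delete, one square at a time, a sequence of $m$ length-$1$ rim hooks that reduces $\YD(\mu)$ to the empty diagram; this is precisely the last assertion of Lemma~\ref{lemma.crop_0}. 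So it remains to show that $\Crop(\mu,m)$ equals the number of standard Young tableaux of shape $\mu$.

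For this I would set up a bijection. Given a removal sequence $\YD(\mu)=\lambda^{(m)}\supset\lambda^{(m-1)}\supset\cdots\supset\lambda^{(0)}=\emptyset$, where $\lambda^{(j-1)}$ is $\lambda^{(j)}$ with one corner cell $c_j$ deleted, define a filling $T$ of $\YD(\mu)$ by writing the entry $j$ in cell $c_j$; every value $1,\dots,m$ is used exactly once. The key point is that for each $j$ the set of cells of $T$ with entry $\le j$ is exactly $\lambda^{(j)}$, a Young diagram, and a filling of $\YD(\mu)$ by $\{1,\dots,m\}$ is standard if and only if each such initial segment is a Young subdiagram (equivalently, no cell carries a larger entry than the cell immediately to its right or immediately below it). Hence $T$ is a standard Young tableau, the construction is reversible --- from a standard $T$ one recovers $\lambda^{(j)}=\{c:T(c)\le j\}$ and deletes the largest-entry corner at each step --- and the two maps are mutually inverse. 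This gives $\Crop(\mu,m)=\#\,\mathrm{SYT}(\mu)$, and combining with the previous paragraph proves the proposition.

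The only nontrivial step is the equivalence ``filling standard $\iff$ every initial segment of entries forms a Young diagram'', which I expect to be the main (though elementary) obstacle: one must argue both directions and, along the way, check that a length-$1$ rim hook of $\YD(\mu)$ is exactly a cell $(i,j)$ with $(i+1,j)\notin\YD(\mu)$ and $(i,j+1)\notin\YD(\mu)$ --- matching the boundary-removal definition of rim hook --- so that each intermediate shape $\lambda^{(j)}$ is a genuine partition. Everything else is bookkeeping, and one could alternatively shortcut the first paragraph by simply invoking the identity $\Crop(\mu,m)=\chi^\mu(\id)$ already recorded in Lemma~\ref{lemma.crop_0}.
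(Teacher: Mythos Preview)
Your proposal is correct and is exactly the argument the paper has in mind: the paper does not give its own proof of this proposition but merely records it as a well-known consequence of the Murnaghan--Nakayama rule (citing Macdonald), having already noted in Lemma~\ref{lemma.crop_0} that iterated application of Theorem~\ref{theorem.MNrule} gives $\chi^\mu(\id)=\Crop(\mu,m)$. Your bijection between removal sequences and standard Young tableaux is the standard way to complete the argument, so there is nothing to compare --- you have simply supplied the details the paper leaves implicit.
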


\begin{remark}\label{remark.Crop_in_literature}
$\Crop(\mu, k)$ coincides in fact with known quantities in the literature. It equals both:
\begin{enumerate}[label = \roman*)]
    \item the number of standard Young tableaux of skew-shape $\mu/(m-k)$ \cite{Sagan1990}, {and}
    \item the Kostka number $K_{\mu, (m-k,1, \dots, 1)}$, where $1$ appears $k$ times \cite[Chapter I, Section 6]{Macdonald1995}.
\end{enumerate}
\end{remark}

\subsection{Spectra of binomial Cayley graphs on symmetric groups}\label{section.spectra_binomial_cayley_graphs_Sm}

In this section we provide a description of the spectrum of the binomial Cayley graph $\Gamma_\Bin(S_m, \FF, k)$.

\begin{lemma}\label{lemma.change_the_binomial}
For every character $\chi$ of $S_m$ and every $f$ such that $k \le f \le m$,
\begin{equation*}\label{equation.swap_binomials_Sm}
\sum_{\substack{\sigma \in S_m \\ \sigma \textrm{ fixes exactly } f \\ \textrm{ elements in } \{1, \dots, m\} }} \chi(\sigma) = 
\sum_{\substack{\sigma \in S_{m - k} \\ \sigma \textrm{ fixes exactly } f-k \\ \textrm{ elements in } \{1, \dots, m-k\} }} \chi(\sigma) \cdot \ \xof{\scb{1.2}{\(\binom{m}{k}\)}}{\scb{1.2}{\(\binom{f}{k}\)}} .
\end{equation*}
Here, $S_{m-k}$ is seen as the subgroup of $S_m$ fixing the $k$ points $\{m-k+1, \dots, m\}$.
\end{lemma}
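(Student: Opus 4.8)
The plan is to prove the identity by a double-counting argument on pairs $(\sigma, I)$, where $\sigma \in S_m$ has exactly $f$ fixed points and $I$ is a $k$-element subset of the fixed-point set of $\sigma$. The only structural inputs needed are that $\chi$ is a class function and that conjugate permutations have equal cycle type; irreducibility of $\chi$ plays no role.

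First I would count these pairs by fixing $\sigma$. A permutation $\sigma$ with $\FF(\sigma) = f$ admits exactly $\binom{f}{k}$ subsets $I$ of size $k$ among its fixed points, so summing $\chi(\sigma)$ over all admissible pairs yields $\binom{f}{k}$ times the left-hand side of the claimed identity.

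Next I would count the same pairs by fixing $I$ first. There are $\binom{m}{k}$ choices of a $k$-subset $I \subseteq \{1, \dots, m\}$, and for each the admissible permutations are exactly those fixing $I$ pointwise and having precisely $f - k$ further fixed points in the complement $\{1, \dots, m\} \setminus I$. The crucial observation is that $\sum_\sigma \chi(\sigma)$ over this set is independent of $I$: choosing any $\pi \in S_m$ with $\pi(I) = \{m-k+1, \dots, m\}$, the conjugation map $\sigma \mapsto \pi \sigma \pi^{-1}$ is a bijection from this set onto $\{\sigma \in S_{m-k} : \sigma \text{ fixes exactly } f-k \text{ elements of } \{1,\dots,m-k\}\}$, and it preserves $\chi$ since $\chi$ is a class function. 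Hence the total count equals $\binom{m}{k}$ times the sum on the right-hand side.

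Equating the two counts gives $\binom{f}{k}\cdot(\text{LHS}) = \binom{m}{k}\cdot(\text{RHS sum})$; dividing by $\binom{f}{k}$, which is nonzero because $k \le f$, produces the statement with the factor $\binom{m}{k}/\binom{f}{k}$ exactly as displayed. There is no serious obstacle beyond making sure the conjugation step is clearly identified as what licenses replacing an arbitrary $I$ by the distinguished set $\{m-k+1,\dots,m\}$. An alternative, slightly more computational route would group both sides by the cycle type $c$ of the non-fixed part: for each derangement cycle type $c$ on $m-f$ points there are $\binom{m}{f}N(c)$ contributing permutations on the left and $\binom{m-k}{f-k}N(c)$ on the right, where $N(c)$ is the number of permutations of a fixed $(m-f)$-set with cycle type $c$, and one finishes with the subset-of-a-subset identity $\binom{m}{f}\binom{f}{k} = \binom{m}{k}\binom{m-k}{f-k}$.
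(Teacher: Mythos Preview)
Your proof is correct. Your main double-counting argument is a genuinely different and somewhat cleaner route than the paper's. The paper proceeds by grouping both sides according to the cycle type of $\sigma$: for a fixed type $(d_{11},\dots,d_{rt_r},1,\dots,1)$ with $f$ ones it writes down the multinomial count of such permutations in $S_m$ and in $S_{m-k}$ explicitly, takes the ratio, and observes that this ratio simplifies to $\binom{m}{k}/\binom{f}{k}$ independently of the type. This is essentially the ``alternative route'' you sketch at the end. Your primary argument instead packages the same combinatorics into a weighted double count of pairs $(\sigma,I)$, with the conjugation step replacing the explicit multinomial computation; this is shorter and makes the role of the class-function hypothesis more transparent, while the paper's version has the minor advantage of exhibiting the exact counts on each side.
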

\begin{proof}
The character $\chi$ is conjugation-invariant. Therefore, for a {given} type of permutation fixing $f$ points in $\{1, \dots, m\}$, the proof reduces to counting how many permutations of {that} type exist in $S_m$, and how many of these belong to $S_{m-k}$. Consider the type
\begin{equation*}\label{equation.type_of_sigma}
(d_{11}, \dots, d_{1t_1},\dots, d_{r1}, \dots, d_{r t_r}, \underbrace{1, \dots, 1}_{f \textrm{ times}}).
\end{equation*}
Here, all the $d_{ij}$ are strictly greater than $1$. Moreover, $d_{ij} = d_i$ for all $j=1, \dots , t_i$, for all $i=1, \dots, r$, and different values of $i$ index different values of $d_i$. The sum of the values $d_{ij}$ is $m-f$ and $1$ is repeated $f$ times. The number of permutations of this type in $S_m$ is
\begin{equation*}
\binom{m}{d_{11}, \dots, d_{rt_r}, \underbrace{1, \dots, 1}_{f \textrm{ times}}} \cdot \frac{(d_{11} - 1)! \cdots (d_{rt_r} - 1)!}{t_1! \cdots t_r! \cdot f!}.
\end{equation*}
Similarly, the number of permutations of this type belonging to $S_{m-k}$ is
\begin{equation*}
\binom{m - k}{d_{11}, \dots, d_{rt_r}, \underbrace{1, \dots, 1}_{f-k \textrm{ times}}} \cdot \frac{(d_{11} - 1)! \cdots (d_{rt_r} - 1)!}{t_1! \cdots t_r! \cdot (f - k)!}.
\end{equation*}
The ratio of these numbers is
\begin{equation*}
\frac{m! \cdot (f-k)!}{f! \cdot (m-k)!} = \xof{\scb{1.2}{\(\binom{m}{k}\)}}{\scb{1.2}{\(\binom{f}{k}\)}} .
\end{equation*}
As this value is independent of the considered permutation type (as soon as it fixes exactly $f$ points in $\{1, \dots, m\}$), the claim follows.
\end{proof}

\begin{lemma}\label{lemma.sum_characters_Sm-k_crop}
Let $\mu \vdash m$ and $k$ such that $0 \le k \le m$. Then,
\begin{equation*}
\sum_{\sigma \in S_{m-k}} \chi^\mu(\sigma) = (m-k)! \cdot \Crop(\mu, k).
\end{equation*}
Here, $S_{m-k}$ is seen as the subgroup of $S_m$ fixing the $k$ points $\{m-k+1, \dots, m\}$.
\end{lemma}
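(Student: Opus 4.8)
The plan is to rewrite the left-hand side as $(m-k)!$ times a character inner product over $S_{m-k}$, expand $\chi^\mu$ restricted to $S_{m-k}$ by iterating the branching rule, and then observe that only the chains of removals terminating at the trivial partition contribute — and that these are counted exactly by $\Crop(\mu,k)$.

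First I would record the elementary identity that, for any class function $\alpha$ on $S_{m-k}$,
\[
\sum_{\sigma \in S_{m-k}} \alpha(\sigma) = (m-k)!\, \langle \alpha, \chi^{(m-k)} \rangle,
\]
where $\chi^{(m-k)}$ is the trivial character of $S_{m-k}$ (identically $1$); so it suffices to compute $\langle \chi^\mu\big|_{S_{m-k}}, \chi^{(m-k)} \rangle$. Next I would extract the branching rule from the Murnaghan--Nakayama rule (Theorem \ref{theorem.MNrule}) in the case $d=1$: for $\sigma \in S_{m-1}$, regarded as the element of $S_m$ fixing $m$, write $\sigma$ as the disjoint product of the $1$-cycle $(m)$ and $\sigma$ itself; since the rim hooks of length $1$ are precisely the single corner boxes, each of height $0$, Theorem \ref{theorem.MNrule} gives $\chi^\nu(\sigma) = \sum_{\nu^-} \chi^{\nu^-}(\sigma)$, the sum over all $\nu^- \vdash m-1$ obtained from $\nu$ by deleting one corner box of $\YD(\nu)$.

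Iterating this $k$ times along the tower of point-stabilisers $S_m \supset S_{m-1} \supset \dots \supset S_{m-k}$ (peeling off the fixed points $m, m-1, \dots, m-k+1$ in turn) then yields, for every $\sigma \in S_{m-k}$,
\[
\chi^\mu(\sigma) = \sum_{\mu = \nu_0 \supset \nu_1 \supset \dots \supset \nu_k} \chi^{\nu_k}(\sigma),
\]
the sum ranging over all chains in which $\YD(\nu_i)$ is obtained from $\YD(\nu_{i-1})$ by removing a single corner box, so $\nu_i \vdash m-i$. Summing over $\sigma \in S_{m-k}$ and invoking orthonormality of the irreducible characters of $S_{m-k}$, each chain contributes $\sum_{\sigma \in S_{m-k}} \chi^{\nu_k}(\sigma) = (m-k)!\, \langle \chi^{\nu_k}, \chi^{(m-k)} \rangle$, which equals $(m-k)!$ when $\nu_k = (m-k)$ and $0$ otherwise. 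Hence the left-hand side is $(m-k)!$ times the number of chains $\mu = \nu_0 \supset \dots \supset \nu_k = (m-k)$ produced by successively deleting corner boxes; and this number is precisely $\Crop(\mu,k)$, since deleting a corner box is exactly removing a rim hook of length $1$ and the target $(m-k)$ is the single-row diagram of length $m-k$.

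The character-theoretic ingredients (orthonormality of irreducible characters and the averaging identity above) are routine. The only point requiring care — the main, if modest, obstacle — is the reduction of the branching rule to Theorem \ref{theorem.MNrule}: one must check that the $d=1$ instance is a legitimate application (a $1$-cycle being the identity on its single point), that its output is exactly the branching sum over removed corners, and that iterating it is compatible with passing down the tower of successive point-stabiliser subgroups. Once this bookkeeping is in place, matching the resulting count of corner-deletion chains with the definition of $\Crop(\mu,k)$ is immediate.
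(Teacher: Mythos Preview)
Your proposal is correct and follows essentially the same route as the paper: iterate the $d=1$ case of the Murnaghan--Nakayama rule $k$ times to write $\chi^\mu|_{S_{m-k}}$ as a sum of $\chi^{\nu_k}$ over corner-deletion chains, then use that $\sum_{\sigma\in S_{m-k}}\chi^{\nu_k}(\sigma)$ vanishes unless $\nu_k=(m-k)$, in which case it equals $(m-k)!$. Your framing via the inner product $\langle\,\cdot\,,\chi^{(m-k)}\rangle$ and the explicit tower of point-stabilisers is slightly more verbose than the paper's, but the argument is the same.
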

\begin{proof}
We compute $\chi^\mu(\sigma)$ by iterative application of Theorem \ref{theorem.MNrule}. To do so, we apply the Murnagham-Nakayama rule $k$ times, removing one single square from $\YD(\mu)$ at each step. Assume we end up with a partition $\nu$ of $m-k$. If this partition is not the trivial partition $(m-k)$, corresponding to a single-row Young diagram of length $m-k$, then $\sum_{\sigma \in S_{m-k}} \chi^{\nu}(\sigma) = 0$. This follows from the general fact that $\sum_{g \in G} \chi(g) = 0$ for every finite group $G$ and every non-trivial irreducible character $\chi$ of $G$. Therefore, we only need to consider the successive removals of $k$ squares that transform $\YD(\mu)$ into a single-row Young diagram of length $m-k$. By definition, there are $\Crop(\mu, k)$ ways to perform this operation. Finally, if $\nu$ is the trivial partition $(m-k)$ of $m-k$, then $\sum_{\sigma \in S_{m-k}} \chi^{\nu}(\sigma) = (m-k)!$, which proves the claim.
\end{proof}

\begin{theorem}\label{theorem.eigenvalues_binomial_Cayley_graph_Sm}
Let $\mu \vdash m$ and $\chi^\mu$ the associated irreducible character of $S_m$. Then:
\begin{enumerate}
\item The value
\begin{equation*}\label{equation.binomial_Cayley_Sm_graph_eigenvalue}
\lambda_\mu = \frac{\binom{m}{k} \cdot (m-k)! \cdot \Crop(\mu, k)}{\chi^\mu(\id)} = 
\xof[0.2ex]{\scb{.9}{\(\dfrac{\Crop(\mu, k)}{k!}\)\hspace{1pt}}}{\scb{.9}{\hspace{0.5pt}\(\dfrac{\Crop(\mu, m)}{m!}\)}}
\end{equation*}

is an eigenvalue of $\A_\Bin(S_m, \FF, k)$.
\item Its contribute in multiplicity is $\chi^\mu(\id)^2$.
\item $\lambda_\mu$ is an integer number.
\item $\lambda_\mu$ satisfies
\begin{equation}\label{equation.bound_lambda_mu_Sm}
0 \le \lambda_\mu \le \binom{m}{k} \cdot (m-k)!
\end{equation}
\end{enumerate}
\end{theorem}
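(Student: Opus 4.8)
The plan is to specialise Theorem \ref{theorem.spectra_weighted_normal_Cayley_graphs} to the weighted normal Cayley graph $\Gamma_\Bin(S_m, \FF, k) = \Gamma(S_m, \FF_k)$ and evaluate the resulting character sum using Lemmas \ref{lemma.change_the_binomial} and \ref{lemma.sum_characters_Sm-k_crop}. Parts (1) and (2) come out together: Theorem \ref{theorem.spectra_weighted_normal_Cayley_graphs} says directly that
\[
\lambda_\mu = \frac{1}{\chi^\mu(\id)} \sum_{\sigma \in S_m} \binom{\FF(\sigma)}{k}\, \chi^\mu(\sigma)
\]
is an eigenvalue of $\A_\Bin(S_m, \FF, k)$ with multiplicity contribution $\chi^\mu(\id)^2$, which is (2) with no further work. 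For (1), I would group the sum by the number $f = \FF(\sigma)$ of fixed points; since $\binom{f}{k} = 0$ for $f < k$, only $k \le f \le m$ survives. Lemma \ref{lemma.change_the_binomial} rewrites the inner sum over permutations in $S_m$ with exactly $f$ fixed points as $\binom{m}{k}\binom{f}{k}^{-1}$ times the sum over permutations in $S_{m-k}$ with exactly $f-k$ fixed points; multiplying back by the outer $\binom{f}{k}$ cancels it, and reindexing $f \mapsto f-k$ over $0, \dots, m-k$ collapses everything to $\binom{m}{k} \sum_{\sigma \in S_{m-k}} \chi^\mu(\sigma)$. Lemma \ref{lemma.sum_characters_Sm-k_crop} evaluates this as $\binom{m}{k}(m-k)!\,\Crop(\mu,k)$, which is the first displayed expression for $\lambda_\mu$; the second is bookkeeping, using $\binom{m}{k}(m-k)! = m!/k!$ and $\chi^\mu(\id) = \Crop(\mu,m)$ from Lemma \ref{lemma.crop_0}.

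For (3), I would note that $\lambda_\mu$ is rational --- each of $\binom{m}{k}$, $(m-k)!$, $\Crop(\mu,k)$, $\chi^\mu(\id)$ is a non-negative or positive integer --- and that it is an algebraic integer, being an eigenvalue of the matrix $\A_\Bin(S_m, \FF, k)$, whose entries $\binom{\FF(\cdot)}{k}$ are ordinary binomial coefficients of natural numbers by hypothesis (1), hence non-negative integers, so the characteristic polynomial is monic with integer coefficients. A rational algebraic integer lies in $\Z$. For (4), the bound $\lambda_\mu \ge 0$ is immediate since $\Crop(\mu,k) \ge 0$ and the other factors are positive; the upper bound follows from Lemma \ref{lemma.crop_0}, which gives $\Crop(\mu,k) \le \Crop(\mu,m) = \chi^\mu(\id)$, so substituting into $\lambda_\mu = \binom{m}{k}(m-k)!\,\Crop(\mu,k)/\chi^\mu(\id)$ and cancelling $\chi^\mu(\id)$ yields $\lambda_\mu \le \binom{m}{k}(m-k)!$. (Equivalently, on a regular graph with non-negative weights every eigenvalue is at most the weighted degree $d_\Bin(S_m, \FF, k) = \sum_\sigma \binom{\FF(\sigma)}{k}$, which the trivial-character case of the computation above identifies with $\binom{m}{k}(m-k)!$.)

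There is no serious obstacle, as the substance has been isolated into the two preparatory lemmas. The one point requiring care is the index manipulation in (1): one must check that the cancellation of $\binom{f}{k}$ is legitimate for each $f$ in the summation range --- it is, because Lemma \ref{lemma.change_the_binomial} is stated exactly for $k \le f \le m$ --- and that reindexing $f \mapsto f-k$ sweeps out all of $S_{m-k}$. The only mildly non-obvious choice is to prove (3) via rational algebraic integers rather than through a direct divisibility argument relating $\Crop(\mu,k)$ and $\chi^\mu(\id)$.
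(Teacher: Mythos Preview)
Your proposal is correct and matches the paper's proof essentially step for step: the paper likewise invokes Theorem~\ref{theorem.spectra_weighted_normal_Cayley_graphs}, groups by $f=\FF(\sigma)$, applies Lemmas~\ref{lemma.change_the_binomial} and~\ref{lemma.sum_characters_Sm-k_crop} in that order, derives the second expression via Lemma~\ref{lemma.crop_0}, and handles (3) by the rational-algebraic-integer argument and (4) via $\Crop(\mu,k)\le\chi^\mu(\id)$. Your parenthetical Laplacian/degree route for the upper bound is exactly what the paper records separately in Remark~\ref{remark.laplacian_Sm}.
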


\begin{remark}\label{remark.inequalities_saturated_Sm}
The latter inequality in Equation \eqref{equation.bound_lambda_mu_Sm} is always saturated. {To see} this, choose the trivial partition $\mu = (m)$, for which $\Crop(\mu, k) = 1 = \chi^\mu(\id)$. On the other hand, the former inequality in Equation \eqref{equation.bound_lambda_mu_Sm} is saturated if and only if $k < m - 1$. To see this, choose the partition $\mu = (1, \dots, 1)$, for which $\Crop(\mu, k) = 0$ unless $k = m-1$ or $k=m$, and conversely $\Crop(\mu, m-1) = \Crop(\mu, m) > 0$ for any partition $\mu$.
\end{remark}

\begin{proof}
By Theorem \ref{theorem.spectra_weighted_normal_Cayley_graphs}, Lemma \ref{lemma.change_the_binomial}, and Lemma \ref{lemma.sum_characters_Sm-k_crop}, we know that the eigenvalue associated with $\mu$ is
\begin{equation*}
\begin{split}
\lambda_\mu &= \frac{1}{\chi^\mu(\id)} \sum_{\sigma \in S_m} \binom{\FF(\sigma)}{k} \cdot \chi^\mu(\sigma) \\
&= \frac{1}{\chi^\mu(\id)} \sum_{f=k}^m \sum_{\substack{\sigma \in S_m \\ \sigma \textrm{ fixes exactly } f \\ \textrm{ elements in } \{1, \dots, m\} }} 
\binom{f}{k} \cdot \chi^\mu(\sigma) \\
&= \frac{1}{\chi^\mu(\id)} \sum_{f=k}^m \sum_{\substack{\sigma \in S_{m-k} \\ \sigma \textrm{ fixes exactly } f-k \\ \textrm{ elements in } \{1, \dots, m-k\} }} \binom{m}{k} \cdot \chi^\mu(\sigma) \\
&= \frac{\binom{m}{k}}{\chi^\mu(\id)} \sum_{\sigma \in S_{m-k}} \chi^\mu(\sigma) \\
&= \frac{\binom{m}{k} \cdot (m-k)! \cdot \Crop(\mu, k)}{\chi^\mu(\id)}.
\end{split}
\end{equation*}
This proves the first equality in (1), while the second equality follows from Lemma \ref{lemma.crop_0}. (2) follows from Theorem \ref{theorem.spectra_weighted_normal_Cayley_graphs}. For (3), it follows from (1) that $\lambda_\mu \in \Q$. Moreover, as $\A_\Bin(S_m, \FF, k)$ has integer entries, $\lambda_\mu$ is an algebraic integer and therefore it belongs to $\Z$. Finally, we prove (4). From (1), it follows that $\lambda_\mu \ge 0$. From lemma \ref{lemma.crop_0}, $\Crop(\mu, k) \le \chi^\mu(\id)$, which implies the other side of the inequality.
\end{proof}

\begin{remark}\label{remark.laplacian_Sm}
An alternative way to prove that $\lambda_\mu \le \binom{m}{k} \cdot (m-k)!$ is through the Laplacian of $\Gamma_\Bin(S_m, \FF, k)$, that is, the matrix
\begin{equation*}
\LL_\Bin(S_m, \FF, k) = d_\Bin(S_m, \FF, k) \cdot \II - \A_\Bin(S_m, \FF, k).
\end{equation*}
As we are assuming that $\omega$ takes non-negative values, $\LL_\Bin(S_m, \FF, k)$ is positive semi-definite and therefore every eigenvalue $\lambda_\mu$ of $\A_\Bin(S_m, \FF, k)$ satisfies $\lambda_\mu \le d_\Bin(S_m, \FF, k)$. On the other hand, following the same argument of Lemma \ref{lemma.change_the_binomial},
\begin{equation*}
\begin{split}
d_\Bin(S_m, \FF, k) &= \sum_{\sigma \in S_m} \binom{\FF(\sigma)}{k} \\
&= \sum_{f=k}^m \binom{f}{k} \cdot \big| \{\sigma \in S_m \text{ fixing exactly $f$ elements in $S_m$}\} \big| \\
&= \sum_{f=k}^m \binom{m}{k} \cdot \big| \{\sigma \in S_{m-k} \text{ fixing exactly $f-k$ elements in $S_{m-k}$}\} \big| \\
&= \binom{m}{k} \cdot (m-k)!
\end{split}
\end{equation*}
which proves the desired inequality.
\end{remark}

\begin{corollary}\label{corollary.rank_binomial_Cayley_graphs_Sm}
The kernel of $\A_\Bin(S_m, \FF, k)$ has dimension
\begin{equation*}\label{equation.dimension_kernel_Sm}
\sum_{\substack{\mu \vdash m \\ \mu_1 < m-k}} \chi^\mu(\id)^2.
\end{equation*}
Equivalently, the rank of $\A_\Bin(S_m, \FF, k)$ is
\begin{equation*}\label{equation.rank_Sm}
\sum_{\substack{\mu \vdash m \\ \mu_1 \ge m-k}} \chi^\mu(\id)^2.
\end{equation*}
\end{corollary}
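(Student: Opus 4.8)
The plan is to read the statement off directly from Theorem~\ref{theorem.eigenvalues_binomial_Cayley_graph_Sm}. That theorem asserts that for every partition $\mu \vdash m$ the number
\[
\lambda_\mu = \frac{\binom{m}{k}\cdot(m-k)!\cdot\Crop(\mu,k)}{\chi^\mu(\id)}
\]
is an eigenvalue of $\A_\Bin(S_m,\FF,k)$ contributing $\chi^\mu(\id)^2$ to its multiplicity. Since the partitions of $m$ index all irreducible characters of $S_m$ and $\sum_{\mu\vdash m}\chi^\mu(\id)^2 = |S_m| = m!$ equals the size of the matrix, the pairs $(\lambda_\mu,\chi^\mu(\id)^2)$ account for the full spectrum with multiplicities. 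In particular, the multiplicity of $0$ as an eigenvalue — which is $\dim\ker\A_\Bin(S_m,\FF,k)$ — equals $\sum_{\mu:\ \lambda_\mu=0}\chi^\mu(\id)^2$ (cf.\ Remark~\ref{remark.multiplicity_eigenvalue}).

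It then remains to determine for which $\mu$ we have $\lambda_\mu = 0$. Since $0 \le k \le m$, the factor $\binom{m}{k}\cdot(m-k)!$ is a positive integer, so $\lambda_\mu = 0$ if and only if $\Crop(\mu,k)=0$. By the first statement of Lemma~\ref{lemma.crop_0}, this holds precisely when the first row of $\YD(\mu)$ has length lower than $m-k$, i.e.\ $\mu_1 < m-k$. Substituting, the dimension of the kernel is $\sum_{\mu\vdash m,\ \mu_1<m-k}\chi^\mu(\id)^2$, as claimed.

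For the equivalent formulation of the rank, one subtracts the kernel dimension from $m!$: using rank–nullity together with $\sum_{\mu\vdash m}\chi^\mu(\id)^2 = m!$ gives
\[
\mathrm{rank}\,\A_\Bin(S_m,\FF,k) \;=\; m! - \sum_{\substack{\mu\vdash m\\ \mu_1<m-k}}\chi^\mu(\id)^2 \;=\; \sum_{\substack{\mu\vdash m\\ \mu_1\ge m-k}}\chi^\mu(\id)^2.
\]
I do not anticipate a genuine obstacle here, as the whole content sits in Theorem~\ref{theorem.eigenvalues_binomial_Cayley_graph_Sm} and Lemma~\ref{lemma.crop_0}; the only point worth stating carefully is the completeness of the spectral list, which is exactly why the identity $\sum_{\mu\vdash m}\chi^\mu(\id)^2 = m!$ is invoked.
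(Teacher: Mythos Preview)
Your argument is correct and follows exactly the route the paper takes: the paper's proof is the single line ``directly follows from Theorem~\ref{theorem.eigenvalues_binomial_Cayley_graph_Sm} and Lemma~\ref{lemma.crop_0}'', and you have simply spelled out the details, including the completeness of the spectral list via $\sum_{\mu\vdash m}\chi^\mu(\id)^2=m!$.
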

\begin{proof}
The proof directly follows from Theorem \ref{theorem.eigenvalues_binomial_Cayley_graph_Sm} and Lemma \ref{lemma.crop_0}.
\end{proof}

We now link Corollary \ref{corollary.rank_binomial_Cayley_graphs_Sm} with combinatorial quantities related to increasing sub-sequences and the celebrated RSK correspondence \cite{Robinson1938,Schensted1961,Knuth1970} (although, technically, we are only using the RS version of the RSK correspondence). The RSK correspondence provides a bijection between permutations $\sigma \in S_m$ and pairs $(P, Q)$ of standard Young tableaux of the same shape on $m$. Given a permutation $\sigma \in S_m$, we say that $\sigma$ admits a $k$-\emph{increasing sub-sequence} if there exist $k$ indices $i_1 < \dots < i_k$ in $\{1, \dots, m\}$ such that $\sigma(i_1) < \dots < \sigma(i_k)$. We denote by $\is(\sigma)$ the length of the longest increasing sub-sequence in $\sigma$. The following result (see \cite[Theorem 3]{Schensted1961}) establishes a relation between the longest increasing sub-sequence in $\sigma$ and its associated standard Young tableaux:

\begin{theorem}\label{theorem.RSK_is}
    Consider $\sigma \in S_m$ and let $(P, Q)$ be the standard Young tableaux associated with $\sigma$ via the RSK correspondence. Denote by $\mu = (\mu_1, \dots, \mu_\ell)$ the shape of $P$ and $Q$. Then $\is(\sigma) = \mu_1$.
\end{theorem}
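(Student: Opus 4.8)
The plan is to reprove Schensted's theorem by tracking the first row of the insertion tableau through the row-insertion process that defines the RS(K) correspondence. Recall that $P$ is built by successively row-inserting $\sigma(1), \sigma(2), \dots, \sigma(m)$ into the empty tableau, and that $Q$ records, at each step, the cell that is newly created; in particular $P$ and $Q$ always have the same shape, so it suffices to prove that the first row of $P$ has length $\is(\sigma)$. Write $P^{(i)}$ for the (standard) tableau obtained after inserting $\sigma(1), \dots, \sigma(i)$, so that $P^{(0)}$ is empty and $P^{(m)} = P$, and let $a^{(i)}_j$ denote the entry in cell $(1, j)$ of $P^{(i)}$, when such a cell exists.

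The heart of the argument is the following claim: for every $i$ and $j$, the cell $(1,j)$ belongs to $P^{(i)}$ if and only if $\sigma(1)\sigma(2)\cdots\sigma(i)$ contains an increasing sub-sequence of length $j$, and in that case
\begin{equation*}
a^{(i)}_j = \min\bigl\{\, \sigma(i_j) \ :\ i_1 < \dots < i_j \le i,\ \sigma(i_1) < \dots < \sigma(i_j) \,\bigr\},
\end{equation*}
i.e.\ $a^{(i)}_j$ is the least possible last term of a length-$j$ increasing sub-sequence drawn from the first $i$ letters of $\sigma$. I would prove the claim by induction on $i$, the case $i=0$ (or $i=1$) being immediate. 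For the inductive step, set $x = \sigma(i+1)$ and examine the effect of inserting $x$ on the first row: if $x$ exceeds every entry of the first row of $P^{(i)}$ it is appended, creating a new cell; otherwise it bumps the entry $a^{(i)}_{j_0}$ in the leftmost cell $(1,j_0)$ with $a^{(i)}_{j_0} > x$. In either case the only first-row cell that changes is $(1,j_0)$, whose new value becomes $x$. One then checks the claim cell by cell. For $j > j_0$ nothing changes and, using that rows of a standard tableau strictly increase, $a^{(i)}_{j-1} > x$ rules out any new length-$j$ increasing sub-sequence ending at $x$. For $j < j_0$ nothing changes and $a^{(i)}_j \le x$ shows that admitting $x$ as a new candidate cannot lower the minimum. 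For $j = j_0$ we use that $a^{(i)}_{j_0-1} < x$ (here the distinctness of the values of $\sigma$ enters, together with $j_0$ being the \emph{first} index with entry exceeding $x$), so a length-$(j_0-1)$ increasing sub-sequence ending at $a^{(i)}_{j_0-1}$ extends by $x$ to a length-$j_0$ one ending at $x$; conversely any length-$j_0$ increasing sub-sequence of $\sigma(1)\cdots\sigma(i+1)$ either avoids $x$, hence ends at a value $\ge a^{(i)}_{j_0} > x$ by the inductive hypothesis, or ends at $x$, so the new minimum is exactly $x = a^{(i+1)}_{j_0}$. The appended case is the instance $j_0 = (\text{length of the first row of }P^{(i)}) + 1$ of the same computation, noting that then $\sigma(1)\cdots\sigma(i)$ has no increasing sub-sequence of length $j_0$ (otherwise the first row of $P^{(i)}$ would already be longer), so the cell is genuinely new. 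This establishes the claim.

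Applying the claim with $i = m$: the first row of $P$ has length equal to the largest $j$ for which $\sigma$ admits an increasing sub-sequence of length $j$, that is, $\mu_1 = \is(\sigma)$, and since $Q$ has the same shape, the conclusion follows. The most delicate point is the inductive step, and within it the verification that the minimum is preserved: one must carefully separate the sub-sequences that use the newly inserted letter from those that do not, handle the boundary index $j_0 = 1$ and the appended case, and make sure the strict monotonicity of the rows and the distinctness of $\sigma(1), \dots, \sigma(m)$ are invoked exactly where they are needed. (Alternatively, this is the first instalment of Greene's theorem, which identifies $\mu_1 + \dots + \mu_j$ with the largest total size of a union of $j$ increasing sub-sequences; but the first-row argument above is self-contained and suffices here.)
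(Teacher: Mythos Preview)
Your argument is correct and is essentially Schensted's original proof: the invariant that the $j$-th first-row entry equals the minimal possible last term of a length-$j$ increasing sub-sequence is exactly what makes the induction go through, and you have checked the three cases $j<j_0$, $j=j_0$, $j>j_0$ (including the boundary $j_0=1$ and the appended case) with the right use of strict row monotonicity and distinctness of the letters.

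The paper, however, does not prove this statement at all: it is quoted as a classical result with a reference to \cite[Theorem~3]{Schensted1961} and then used as a black box to pass from Corollary~\ref{corollary.rank_binomial_Cayley_graphs_Sm} to Corollary~\ref{corollary.increasing_sequences}. So your proposal is not so much a different route as a self-contained substitute for the citation. What your write-up buys is that a reader does not have to leave the paper to see why $\mu_1=\is(\sigma)$; what the paper's choice buys is brevity, since the result is standard and plays only an auxiliary role. Your parenthetical remark about Greene's theorem is apt: it is the natural generalisation, but the first-row invariant is all that is needed here.
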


Considering all the possible pairs of standard Young tableaux $(P, Q)$ of the same shape, it follows from Theorem \ref{theorem.RSK_is} and Proposition \ref{proposition.number_standard_young_tableaux} that:

\begin{corollary}\label{corollary.result_cited_on_increasing_sequences}
For every $t$ {such that} $0 \le t \le m$,
\begin{equation*}
\big| \Set{ \sigma \in S_m | \is(\sigma) = t } \big| = \sum_{\substack{\mu \vdash m \\ \mu_1 = t}} \chi^\mu(\id)^2.
\end{equation*}
\end{corollary}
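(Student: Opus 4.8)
The plan is to deduce Corollary~\ref{corollary.result_cited_on_increasing_sequences} directly from the RSK correspondence together with Theorem~\ref{theorem.RSK_is} and Proposition~\ref{proposition.number_standard_young_tableaux}, essentially by counting the fibers of a bijection. I would begin by recalling that the RS(K) correspondence is a bijection between $S_m$ and the set of pairs $(P, Q)$ of standard Young tableaux of the same shape $\mu \vdash m$. Hence the set $S_m$ decomposes as a disjoint union over partitions $\mu \vdash m$ of the sets of permutations whose associated pair has common shape $\mu$, and by Proposition~\ref{proposition.number_standard_young_tableaux} the number of permutations mapping to shape $\mu$ is exactly the number of such pairs, namely $\chi^\mu(\id)^2$ (since $\chi^\mu(\id)$ counts standard Young tableaux of shape $\mu$, and $P$ and $Q$ are chosen independently).

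The second ingredient is Theorem~\ref{theorem.RSK_is}, which tells us that for a permutation $\sigma$ whose RSK shape is $\mu = (\mu_1, \dots, \mu_\ell)$, the length $\is(\sigma)$ of the longest increasing subsequence equals the length $\mu_1$ of the first row. Consequently, the condition $\is(\sigma) = t$ on $\sigma$ translates exactly into the condition $\mu_1 = t$ on its RSK shape. I would then intersect the decomposition of the previous paragraph with this condition: the set $\{\sigma \in S_m : \is(\sigma) = t\}$ is the disjoint union, over partitions $\mu \vdash m$ with $\mu_1 = t$, of the sets of permutations with RSK shape $\mu$.

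Putting these together, I would simply sum the cardinalities:
\begin{equation*}
\big| \Set{ \sigma \in S_m | \is(\sigma) = t } \big| = \sum_{\substack{\mu \vdash m \\ \mu_1 = t}} \big| \Set{ \sigma \in S_m | \text{RSK shape of } \sigma \text{ is } \mu} \big| = \sum_{\substack{\mu \vdash m \\ \mu_1 = t}} \chi^\mu(\id)^2,
\end{equation*}
which is precisely the claimed identity. Note that the edge cases $t = 0$ (only $m = 0$, vacuous) and $t = m$ (only $\mu = (m)$, giving $1 = 1^2$) are automatically covered.

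I do not anticipate any genuine obstacle here: the statement is a bookkeeping consequence of facts already assembled in the excerpt, and the only point requiring a line of care is the factorization $\chi^\mu(\id)^2 = \chi^\mu(\id) \cdot \chi^\mu(\id)$ for the count of pairs $(P,Q)$, which is immediate from Proposition~\ref{proposition.number_standard_young_tableaux} once one observes that $P$ and $Q$ range independently over standard Young tableaux of shape $\mu$. If anything deserves emphasis, it is merely that the union over $\mu$ with $\mu_1 = t$ is genuinely disjoint (distinct shapes give distinct RSK outputs, hence distinct preimages), which is part of the bijectivity of RSK.
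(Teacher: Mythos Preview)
Your proposal is correct and follows essentially the same approach as the paper: the paper states the corollary as an immediate consequence of Theorem~\ref{theorem.RSK_is} and Proposition~\ref{proposition.number_standard_young_tableaux} by counting pairs $(P,Q)$ of standard Young tableaux of the same shape, which is exactly the argument you have spelled out in detail.
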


Finally, from Corollary \ref{corollary.rank_binomial_Cayley_graphs_Sm} and Corollary \ref{corollary.result_cited_on_increasing_sequences}:
\begin{corollary}\label{corollary.increasing_sequences}
The dimension of the kernel of $\A_\Bin(S_m, \FF, k)$ coincides with the number of permutations in $S_m$ with no increasing sub-sequences of length $m-k$. Equivalently, the rank of the matrix $\A_\Bin(S_m, \FF, k)$ coincides with the number of permutations in $S_m$ admitting an $(m-k)$-increasing sub-sequence.
\end{corollary}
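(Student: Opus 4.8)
The plan is to deduce the corollary directly by combining Corollary~\ref{corollary.rank_binomial_Cayley_graphs_Sm}, which expresses $\dim \ker \A_\Bin(S_m, \FF, k)$ (and the rank) as a sum of squared character degrees, with Corollary~\ref{corollary.result_cited_on_increasing_sequences}, which counts permutations by the length of their longest increasing sub-sequence in terms of the very same quantities. No new idea is required; the proof is a reindexing of sums.

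First I would unwind the condition ``no increasing sub-sequence of length $m-k$''. By the definition of $\is(\sigma)$, a permutation $\sigma \in S_m$ fails to admit an $(m-k)$-increasing sub-sequence precisely when $\is(\sigma) \le m-k-1$, that is, when $\is(\sigma) < m-k$. Splitting $S_m$ according to the value of $\is$ and invoking Corollary~\ref{corollary.result_cited_on_increasing_sequences} for each value, we get
\[
\big| \{\, \sigma \in S_m : \is(\sigma) < m-k \,\} \big| = \sum_{t=0}^{m-k-1} \big| \{\, \sigma \in S_m : \is(\sigma) = t \,\} \big| = \sum_{t=0}^{m-k-1} \sum_{\substack{\mu \vdash m \\ \mu_1 = t}} \chi^\mu(\id)^2 = \sum_{\substack{\mu \vdash m \\ \mu_1 < m-k}} \chi^\mu(\id)^2,
\]
where the last equality holds because every partition $\mu \vdash m$ contributes to exactly one inner sum, the one indexed by $t = \mu_1$. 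By Corollary~\ref{corollary.rank_binomial_Cayley_graphs_Sm} the right-hand side equals $\dim \ker \A_\Bin(S_m, \FF, k)$, which gives the first assertion.

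For the statement about the rank I would pass to complements inside $S_m$. A permutation admits an $(m-k)$-increasing sub-sequence if and only if $\is(\sigma) \ge m-k$, equivalently $\mu_1 \ge m-k$ for its RSK shape $\mu$. Summing Corollary~\ref{corollary.result_cited_on_increasing_sequences} over $t \ge m-k$ --- or, equivalently, subtracting the previous display from the classical identity $\sum_{\mu \vdash m} \chi^\mu(\id)^2 = m!$ (which itself follows by summing Corollary~\ref{corollary.result_cited_on_increasing_sequences} over all $t$) --- yields
\[
\big| \{\, \sigma \in S_m : \sigma \text{ admits an $(m-k)$-increasing sub-sequence} \,\} \big| = \sum_{\substack{\mu \vdash m \\ \mu_1 \ge m-k}} \chi^\mu(\id)^2,
\]
which is exactly the rank of $\A_\Bin(S_m, \FF, k)$ as given in Corollary~\ref{corollary.rank_binomial_Cayley_graphs_Sm}.

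I do not expect any genuine obstacle: the mathematical content is supplied entirely by the earlier corollaries, and the only points deserving a moment's care are the translation between ``no $(m-k)$-increasing sub-sequence'' and $\is(\sigma) < m-k$, and the degenerate regime $k \ge m-1$, where $m-k \le 1$, the first sum above is empty, the kernel is trivial, and the rank is $m!$ --- consistent with the fact that every permutation contains an increasing sub-sequence of length $1 \ge m-k$.
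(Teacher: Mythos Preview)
Your proposal is correct and follows exactly the approach of the paper, which simply cites Corollaries~\ref{corollary.rank_binomial_Cayley_graphs_Sm} and~\ref{corollary.result_cited_on_increasing_sequences} without further comment; your write-up merely makes the reindexing explicit. The one cosmetic point is that in the regime $k=m-1$ the outer sum $\sum_{t=0}^{m-k-1}$ is not literally empty but has the single term $t=0$, which vanishes because no partition of $m\ge 1$ has $\mu_1=0$; this does not affect the argument.
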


\subsection{Binomial weight functions of symmetric groups in terms of representations}\label{section.binomial_weight_function_Sm_representation}
In this section, we express the weight function $\FF_k(\sigma) = \binom{\FF(\sigma)}{k}$ in terms of a specific representation of $S_m$.\\

Let $V$ be the vector space over $\C$ with basis $\{e_1, \dots, e_m\}$. $S_m$ acts on $V$ in a natural way: $\sigma \cdot e_i = e_{\sigma(i)}$. A basis for its $k$-th tensor power $V^{\otimes k}$ is $\{e_{i_1} \otimes \cdots \otimes e_{i_k}\}$, for all possible choices of indices $1 \le i_1, \dots, i_k \le m$. The action of $S_m$ extends to $V^{\otimes k}$ by acting on all indices: $\sigma \cdot e_{i_1} \otimes \cdots \otimes e_{i_k} = e_{\sigma(i_1)} \otimes \cdots \otimes e_{\sigma(i_k)}$. Let $W$ be the subspace of $V^{\otimes k}$ generated by the basis elements corresponding to choices of $i_1, \dots, i_k$ all distinct. Let us denote by $\BB$ this basis of $W$. The subspace $W$ has dimension $\binom{m}{k} \cdot k!$, and is in fact an $S_m$-sub-module of $V^{\otimes k}$.\\

Let $\chi_k$ be the character associated with this representation. Since $\sigma \in S_m$ permutes the elements of $\BB$, $\chi_k(\sigma)$ coincides with the number of elements of $\BB$ that are fixed by $\sigma$, that is,
\begin{equation*}
\chi_k(\sigma) = \binom{\FF(\sigma)}{k} \cdot k! = \FF_k(\sigma) \cdot k!
\end{equation*}
As this holds for any $\sigma \in S_m$,
\begin{equation*}
\FF_k = \frac{1}{k!} \cdot \chi_k.
\end{equation*}
From this description, we can deduce in an alternative way that every eigenvalue $\lambda_\mu$ of $\A_\Bin(S_m, \FF, k)$ is non-negative. Indeed, from Theorem \ref{theorem.spectra_weighted_normal_Cayley_graphs},
\begin{equation*}
\lambda_\mu = \frac{m!}{\chi^\mu(\id)} \langle \FF_k, \overline{\chi^\mu} \rangle = \frac{m!}{\chi^\mu(\id) \cdot k!} \langle \chi_k, \overline{\chi^\mu} \rangle \ge 0,
\end{equation*}
as the inner product of two characters is always non-negative. Furthermore, with this interpretation, we can also deduce that $\lambda_\mu$ is an integer, independently of Theorem \ref{theorem.eigenvalues_binomial_Cayley_graph_Sm}. Indeed, $\lambda_\mu \in \Q$ as the inner product of two characters is always an integer number, and the rest follows as before.

\section{Binomial Cayley graphs on powers of cyclic groups}\label{section.binomial_Cayley_graphs_on_Zm^n}
Let $\Z_m$ be the cyclic group of $m$ elements $\{0, \dots, m-1\}$. For a positive integer $n$, we consider its $n$-th power $(\Z_m)^n$ and the weight function $\ZZ \colon (\Z_m)^n \to \N$, {which gives} number of zero coordinates of an $n$-dimensional vector (see Figure \ref{fig:cyclic_cayley_graph_1}). This induces, for every $k \in \N$, the weight function {$\ZZ_k \colon (\Z_m)^n \to \N$ defined as}
\begin{equation*}\label{eq:cyclic_group_weight}
{\ZZ_k(\x) = \binom{\ZZ(\x)}{k}.}
\end{equation*}
Clearly, $\ZZ(-\x) = \ZZ(\x)$ and, as $(\Z_m)^n$ is abelian, $\ZZ$ is a class function. Therefore, we obtain a family of associated weighted normal binomial Cayley graphs $\Gamma_\Bin((\Z_m)^n, \ZZ, k)$ (see Figure \ref{fig:cyclic_cayley_graphs}).

\begin{figure}[htb]
    \centering
    \includegraphics[scale=0.45]{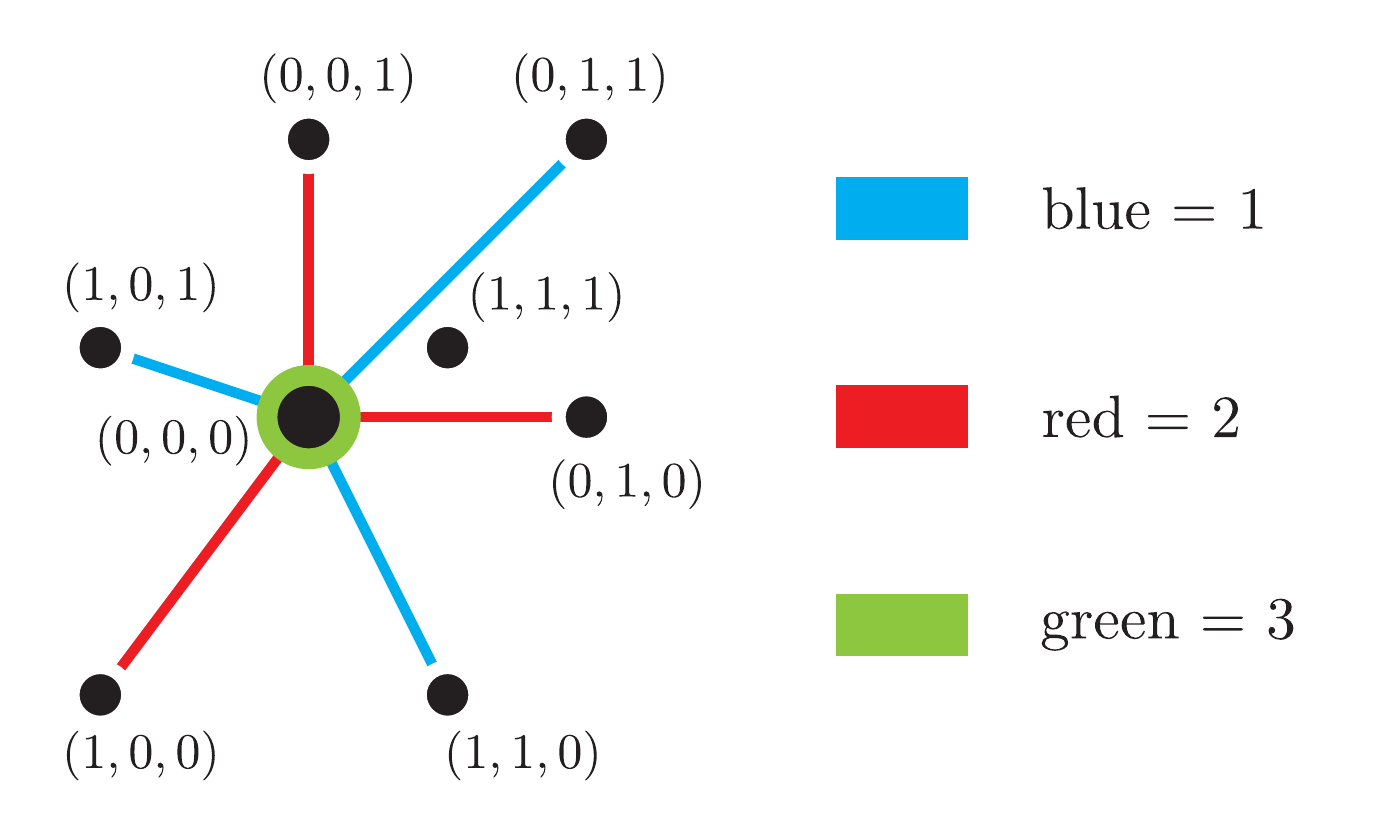}
    \caption{The weighted Cayley graph on the hypercube group $(\Z_2)^3$, induced by the weight function $\ZZ$. Colour encircling nodes represents self-loops.}
    \label{fig:cyclic_cayley_graph_1}
\end{figure}

\begin{figure}[htb]
    \centering
    \includegraphics[scale=0.45]{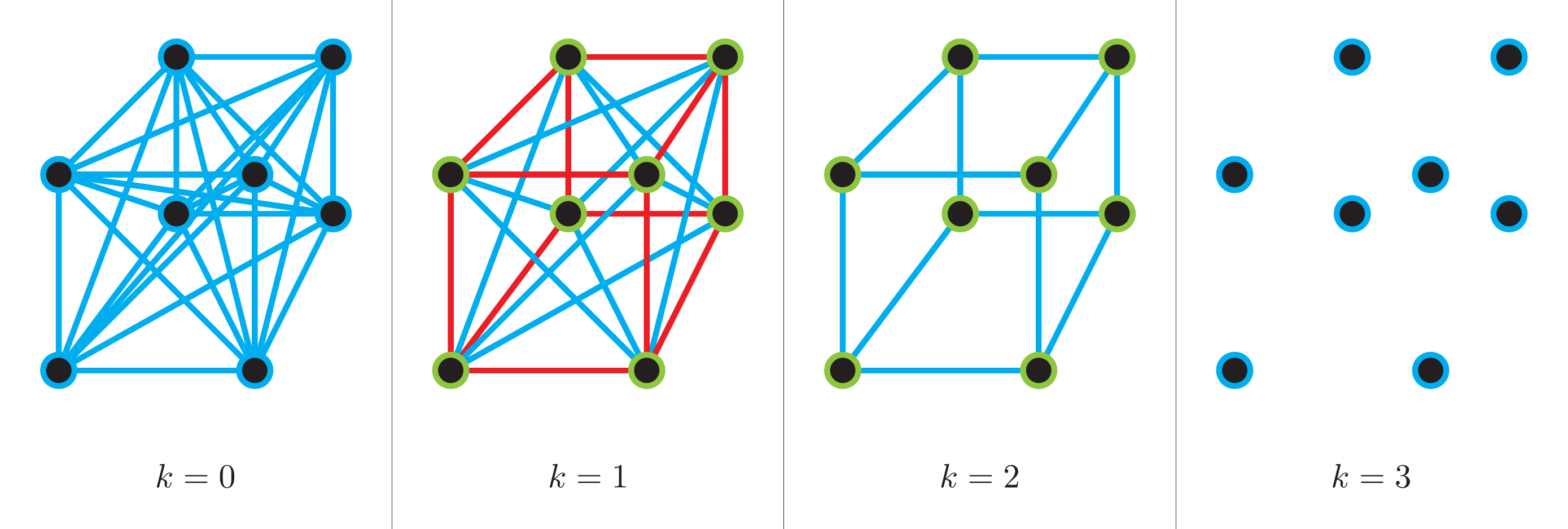}
    \caption{The binomial Cayley graphs on $(\Z_2)^3$, induced by the weight function $\ZZ$. Colour encircling nodes represents self-loops.}
    \label{fig:cyclic_cayley_graphs}
\end{figure}

\begin{remark}
    The discussion carried out in this chapter does not strictly require the group $G$ to be a power of a cyclic group. All consequent results can be extended to groups $G = G_1 \times \dots \times G_n$, where the factors $G_i$ are groups of the same order $m$. For clarity of notation and exposition, we set $G=(\Z_m)^n$.
\end{remark}

\subsection{Characters of powers of cyclic groups}\label{section.characters_Zm^n}
{Given} an abelian group $G$, its \emph{dual group} $G^\dag = \Hom(G, \C^*)$ consists of the irreducible characters of $G$. {Let us} fix a primitive $m$-th root of unity $\zeta \in \C$. For the cyclic group $\Z_m$, this choice induces an isomorphism of groups between $\Z_m$ and its dual group $(\Z_m)^\dag = \Hom(\Z_m, \C^*)$, mapping $y \in \Z_m$ to the irreducible character $\chi^y$, defined for $x \in \Z_m$ as
\begin{equation*}\label{equation.irr_chars_Zm}
\chi^y(x) = \zeta^{y \cdot x}.
\end{equation*}

\begin{lemma}\label{lemma.sum_chi_only_one_Zm^n}
Let $y \in \Z_m$ and {denote by} $\chi^y$ its associated character. Then,
\begin{equation*}
\sum_{x \in \Z_m \setminus \{0\}} \chi^y(x) =
\begin{cases}
m-1 & \text{if $y=0$,} \\
-1 & \text{if $y\neq0$.}
\end{cases}
\end{equation*}
\end{lemma}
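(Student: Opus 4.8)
The plan is to reduce the claim to the standard orthogonality relation for irreducible characters of a finite (abelian) group, and then separate out the contribution of the identity element $0 \in \Z_m$. Concretely, I would first compute the full sum over all of $\Z_m$, namely $\sum_{x \in \Z_m} \chi^y(x)$, and then use that $\sum_{x \in \Z_m \setminus \{0\}} \chi^y(x) = \left( \sum_{x \in \Z_m} \chi^y(x) \right) - \chi^y(0)$, noting $\chi^y(0) = \zeta^{y \cdot 0} = 1$.

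For the full sum there are two equivalent routes. The first is the general fact already invoked in the proof of Lemma \ref{lemma.sum_characters_Sm-k_crop}: for any finite group $G$ and any irreducible character $\chi$, $\sum_{g \in G} \chi(g) = |G|$ if $\chi$ is trivial and $0$ otherwise. Here $\chi^y$ is the trivial character of $\Z_m$ precisely when $y = 0$, so $\sum_{x \in \Z_m} \chi^y(x)$ equals $m$ if $y = 0$ and $0$ if $y \neq 0$. The second, entirely self-contained route is a direct geometric-series computation: $\sum_{x=0}^{m-1} \zeta^{yx} = \sum_{x=0}^{m-1} (\zeta^y)^x$, which equals $m$ when $\zeta^y = 1$ (i.e. $y = 0$, since $\zeta$ is primitive), and equals $\dfrac{(\zeta^y)^m - 1}{\zeta^y - 1} = \dfrac{(\zeta^m)^y - 1}{\zeta^y - 1} = 0$ when $y \neq 0$, again using that $\zeta$ is a primitive $m$-th root of unity so $\zeta^m = 1$ and $\zeta^y \neq 1$.

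Combining the two observations: when $y = 0$, $\sum_{x \in \Z_m \setminus \{0\}} \chi^y(x) = m - 1$; when $y \neq 0$, it equals $0 - 1 = -1$. This is exactly the asserted case distinction, so the lemma follows.

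There is essentially no obstacle here; the only point requiring any care is making sure the hypothesis that $\zeta$ is a \emph{primitive} $m$-th root of unity is used correctly (it is what guarantees $\zeta^y \neq 1$ for $1 \le y \le m-1$, equivalently that $\chi^y$ is non-trivial for $y \neq 0$ in $\Z_m$). I would present the geometric-series version for self-containedness, with a one-line remark that it is just the character orthogonality relation in disguise.
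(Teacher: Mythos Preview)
Your proposal is correct and matches the paper's own proof, which simply states that the result ``follows from properties of roots of unity or from the theory of irreducible characters.'' You have spelled out both of these routes in detail, so your argument is exactly what the paper has in mind.
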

\begin{proof}
This follows from properties of roots of unity or from the theory of irreducible characters.
\end{proof}

As irreducible characters of product of groups are products of irreducible characters of their factors, the isomorphism above extends to an isomorphism between $(\Z_m)^n$ and $((\Z_m)^n)^\dag \simeq ((\Z_m)^\dag)^n$, sending $\y \in (\Z_m)^n$ to the irreducible character $\chi^\y$, defined for $\x \in (\Z_m)^n$ as
\begin{equation*}\label{equation.irr_chars_Zm^n}
\chi^\y(\x) = \prod_{i=1}^{n} \zeta^{y_i \cdot x_i} = \zeta^{\sum_{i=1}^n y_i \cdot x_i} = \zeta^{\y \cdot \x}.
\end{equation*}

\subsection{Spectra of binomial Cayley graphs on powers of cyclic groups}\label{section.spectra_binomial_cayley_graphs_Zm^n}

In this section, we provide a description of the spectrum of the binomial Cayley graph $\Gamma_\Bin((\Z_m)^n, \ZZ, k)$.
\begin{lemma}\label{lemma.generating_functions}
Let $M, N, k$ be non-negative integers such that $M+k \ge N$. Then,
\begin{equation}\label{equation.binomial_generating_equation}
\sum_{p=0}^M (-1)^p \cdot \binom{N}{p} \cdot \binom{k + M - p}{M-p} = \binom{k - N + M}{M}.
\end{equation}
\end{lemma}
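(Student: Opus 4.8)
The plan is to prove \eqref{equation.binomial_generating_equation} by generating functions, reading both sides as the $x^M$-coefficient of a single power-series identity. The starting point is the pair of elementary expansions $(1-x)^N = \sum_{p=0}^{N} (-1)^p \binom{N}{p} x^p$ (binomial theorem) and $(1-x)^{-(k+1)} = \sum_{j \ge 0} \binom{k+j}{j} x^j$ (negative binomial series), both valid as formal power series. Taking their Cauchy product, the coefficient of $x^M$ is exactly $\sum_{p=0}^{M} (-1)^p \binom{N}{p} \binom{k+M-p}{M-p}$, the left-hand side of \eqref{equation.binomial_generating_equation}; here one uses that $\binom{N}{p} = 0$ for $p > N$, so extending the summation range from $\min(M,N)$ to $M$ is harmless.

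On the other hand the product of the two series is $(1-x)^{N}\,(1-x)^{-(k+1)} = (1-x)^{N-k-1}$, so it remains to identify the $x^M$-coefficient of $(1-x)^{N-k-1}$ with $\binom{k-N+M}{M}$, and this is where a short case split enters. If $N \le k$, then $(1-x)^{N-k-1} = (1-x)^{-(k+1-N)}$ with $k+1-N \ge 1$, so by the negative binomial series again its $x^M$-coefficient is $\binom{(k-N)+M}{M}$, as required (and note $k-N+M \ge 0$ automatically in this range). If instead $N \ge k+1$, then $(1-x)^{N-k-1}$ is a polynomial of degree $N-k-1$; the hypothesis $M+k \ge N$ gives $M \ge N-k > N-k-1$, so its $x^M$-coefficient is $0$, while on the other side $\binom{k-N+M}{M}$ has non-negative top $k-N+M = M-(N-k)$, strictly smaller than $M$ because $N-k \ge 1$, hence it too equals $0$. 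Combining the two evaluations of the $x^M$-coefficient yields the identity.

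There is no serious obstacle here: the only point requiring care is that the closed form $\binom{k-N+M}{M}$ is intended as an ordinary binomial coefficient, which forces the split according to the sign of $N-k-1$, and the hypothesis $M+k \ge N$ is precisely what makes the polynomial case ($N \ge k+1$) collapse to the trivial identity $0 = 0$. For completeness I would also note an equally short alternative by induction on $N$: the base case $N=0$ is the tautology $\binom{k+M}{M} = \binom{k+M}{M}$, and the inductive step follows from Pascal's rule $\binom{N+1}{p} = \binom{N}{p} + \binom{N}{p-1}$ applied on the left together with a single application of Pascal's rule on the right, $\binom{k-N+M-1}{M} = \binom{k-N+M}{M} - \binom{k-N+M-1}{M-1}$ (the hypothesis $M+k \ge N+1$ guaranteeing that the two instances of the inductive hypothesis used, for the parameters $(N,M)$ and $(N,M-1)$, are applicable).
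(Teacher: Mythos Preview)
Your proof is correct and follows essentially the same generating-function argument as the paper: identify the left-hand side as the $x^M$-coefficient of $(1-x)^N(1-x)^{-(k+1)}=(1-x)^{N-k-1}$, then split according to the sign of $N-k-1$ and use the hypothesis $M+k\ge N$ to see both sides vanish in the polynomial case. The brief inductive alternative you sketch is a nice addition not present in the paper, and it checks out (with the trivial case $M=0$ handled separately).
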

\begin{proof}
This proof relies on generating functions. By defining
\begin{equation*}
a_p = (-1)^p \cdot \binom{N}{p}, \quad b_p = \binom{k + p}{p},
\end{equation*}
we see that the left-hand side of Equation \eqref{equation.binomial_generating_equation} is the Cauchy product of the sequences $a_p$ and $b_p$. In other words, if we set
\begin{equation*}
A(X) = \sum_{p\ge0} a_p X^p, \quad B(X) = \sum_{p\ge0} b_p X^p,
\end{equation*}
then the left-hand side is the $M$-th coefficient of $C(X) = A(X) \cdot B(X)$. The functions $A(X)$ and $B(X)$ are well-known expansions of
\begin{equation*}
A(X) = (1-X)^N, \quad B(X) = (1-X)^{-(k+1)}.
\end{equation*}
Thus, $C(X) = (1-X)^{N-k-1}$. We distinguish two cases:
\begin{enumerate}[label = \roman*)]
    \item Assume first that $N-k-1 \ge 0$. Then, the $M$-th term in the expansion of $C(X)$ is $(-1)^{M} \cdot \binom{N-k-1}{M} = 0$, as $N-k-1 < N-k \le M$ by hypothesis. On the other hand, $M + k - N < M + k - N + 1 \le M$, so that the right-hand side of Equation \eqref{equation.binomial_generating_equation} is $0$ as well.
    \item Suppose instead that $N-k-1 < 0$. Then, the $M$-th term of the expansion of $C(X) = (1-x)^{-(k-N+1)}$ is $\binom{k - N + M}{M}$, and the proof is concluded. \qedhere
\end{enumerate}
\end{proof}

\begin{lemma}\label{lemma.chi_y_sum_x_m-1_-1}
Let $\y \in (\Z_m)^n$ and consider its associated irreducible character $\chi^\y$ of $(\Z_m)^n$. Also, let $t=\ZZ(\y)$ and $z$ such that $0 \le z \le n$. Then,
\begin{equation*}
\sum_{\substack{\x \in (\Z_m)^n \\ \ZZ(\x) = z}} \chi^\y(\x) = \sum_{\ell=0}^{n-z} (m-1)^\ell \cdot (-1)^{n-z-\ell} \cdot \binom{t}{\ell} \cdot \binom{n-t}{n-z-\ell}.
\end{equation*}
\end{lemma}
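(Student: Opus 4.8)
The plan is to compute the left-hand side directly by summing the explicit formula $\chi^\y(\x) = \zeta^{\y\cdot\x}$ over all $\x$ with exactly $z$ zero coordinates, organising the sum according to how the support of $\x$ (the set of nonzero coordinates) interacts with the support of $\y$. Write $\calN=\{1,\dots,n\}$, and let $Y=\{i : y_i\neq 0\}$, so $|Y|=n-t$ and $|\calN\setminus Y|=t$. A vector $\x$ with $\ZZ(\x)=z$ is determined by choosing its support $S\subseteq\calN$ with $|S|=n-z$, and then choosing nonzero values $x_i\in\Z_m\setminus\{0\}$ for $i\in S$. Since $\chi^\y(\x)=\prod_{i\in S}\zeta^{y_i x_i}$ and the coordinates are independent, the sum factors over $i\in S$: for $i\in S\setminus Y$ (where $y_i=0$) each factor $\sum_{x_i\neq 0}\zeta^{0}=m-1$ contributes, and for $i\in S\cap Y$ each factor is $\sum_{x_i\neq 0}\zeta^{y_i x_i}=-1$ by Lemma~\ref{lemma.sum_chi_only_one_Zm^n} (using that $y_i\neq 0$).

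Next I would collect terms by the parameter $\ell := |S\setminus Y| = |S|-|S\cap Y|$, i.e.\ the number of support coordinates of $\x$ lying outside the support of $\y$. For a fixed value of $\ell$, the number of ways to choose $S$ is $\binom{t}{\ell}$ (choosing the $\ell$ elements of $S$ inside $\calN\setminus Y$, which has $t$ elements) times $\binom{n-t}{(n-z)-\ell}$ (choosing the remaining $|S\cap Y|=(n-z)-\ell$ elements of $S$ inside $Y$, which has $n-t$ elements), and each such $S$ contributes $(m-1)^\ell\cdot(-1)^{(n-z)-\ell}$ from the factorisation above. Summing over the admissible range $0\le\ell\le n-z$ (with the binomials vanishing automatically outside the range where the choices are possible) yields exactly
\begin{equation*}
\sum_{\ell=0}^{n-z}(m-1)^\ell\cdot(-1)^{n-z-\ell}\cdot\binom{t}{\ell}\cdot\binom{n-t}{n-z-\ell},
\end{equation*}
which is the claimed expression.

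There is no serious obstacle here; the only point requiring a little care is the bookkeeping of the two independent choices defining $S$ and the verification that the binomial coefficients $\binom{t}{\ell}$ and $\binom{n-t}{n-z-\ell}$ correctly encode the count (and correctly vanish, e.g.\ when $\ell>t$ or $(n-z)-\ell>n-t$), so that the sum can be written over the full range $0\le\ell\le n-z$ without extra constraints. Lemma~\ref{lemma.generating_functions} is not needed for this particular lemma; it will presumably be used in a subsequent step to simplify the resulting alternating sum when computing the eigenvalues of $\A_\Bin((\Z_m)^n,\ZZ,k)$.
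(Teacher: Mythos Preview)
Your proposal is correct and follows essentially the same approach as the paper: both proofs split the sum over $\x$ according to the set of nonzero coordinates of $\x$ (your $S$, the complement of the paper's $Z$), factor using Lemma~\ref{lemma.sum_chi_only_one_Zm^n}, and collect terms by the parameter $\ell$ counting how many of these nonzero coordinates fall where $\y$ vanishes. The only difference is cosmetic (support versus zero set), and your remark that Lemma~\ref{lemma.generating_functions} is reserved for the subsequent eigenvalue computation is also correct.
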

\begin{proof}
We see the character $\chi^\y$ as the product of characters $\chi^{y_i}$ on $\Z_m$, for $i=1, \dots, n$. Then, by Lemma \ref{lemma.sum_chi_only_one_Zm^n},
\begin{equation*}
\begin{split}
\sum_{\substack{\x \in (\Z_m)^n \\ \ZZ(\x) = z}} \chi^\y(\x) &= \sum_{\substack{Z \subseteq \{1, \dots, n\} \\ |Z| = z} }\sum_{\substack{\x \in (\Z_m)^n \ \text{s.t.} \\ \text{the set of zeros of $\x$ is $Z$}}} \chi^\y(\x) \\
&= \sum_{\ell=0}^{n-z} \sum_{\substack{Z \subseteq \{1, \dots, n\}, \ |Z| = z \ \text{s.t.} \\ \text{the complement of $Z$ contains} \\ \text{$\ell$ elements of the $t$ zeros of $\y$}}} \quad \sum_{\substack{\x \in (\Z_m)^n \ \text{s.t.} \\ \text{the set of zeros of $\x$ is $Z$}}} \chi^\y(\x) \\
&=\sum_{\ell=0}^{n-z} \sum_{\substack{Z \subseteq \{1, \dots, n\}, \ |Z| = z \ \text{s.t.} \\ \text{the complement of $Z$ contains} \\ \text{$\ell$ elements of the $t$ zeros of $\y$}}} (m-1)^\ell \cdot (-1)^{n-z-\ell} \\
&=\sum_{\ell=0}^{n-z} (m-1)^\ell \cdot (-1)^{n-z-\ell} \cdot {\footnotesize \left| \ \left\{
\begin{array}{c}
Z \subseteq \{1, \dots, n\}, \ |Z| = z \ \text{s.t.} \\
\text{the complement of $Z$ contains} \\
\text{$\ell$ elements of the $t$ zeros of $\y$}
\end{array}
\right\} \ \right|}.
\end{split}
\end{equation*}
Now, a choice for $Z$ as described above is equivalent to a choice of its complement. This is a set of $n-z$ elements: $\ell$ of these belong to the subset $\ZZ(\y)$, of cardinality $t$; the remaining $n - z - \ell$ belong to the complement of $\ZZ(\y)$, of cardinality $n - t$. The number of such choices is $\binom{t}{\ell} \cdot \binom{n-t}{n-z-\ell}$, and the claim is proven.
\end{proof}

\begin{theorem}\label{theorem.eigenvalues_binomial_Cayley_graph_Zm^n}
Let $\y$ be an element of $(\Z_m)^n$ and {denote by} $\chi^\y$ its associated irreducible character of $(\Z_m)^n$. Then:
\begin{enumerate}
\item The value
\begin{equation*}\label{equation.binomial_Cayley_Zm^n_graph_eigenvalue}
\lambda_\y = m^{n-k} \cdot \binom{\ZZ(\y)}{n-k}
\end{equation*}
is an eigenvalue of $\A_\Bin((\Z_m)^n, \ZZ, k)$. \item Its contribute in multiplicity is $1$.
\item $\lambda_\y$ is an integer number.
\item $\lambda_\y$ satisfies
\begin{equation}\label{equation.bound_lambda_mu_Zm^n}
0 \le \lambda_\y \le m^{n-k} \cdot \binom{n}{k}.
\end{equation}
\end{enumerate}
\end{theorem}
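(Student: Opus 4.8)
The plan is to apply Theorem~\ref{theorem.spectra_weighted_normal_Cayley_graphs} to $G=(\Z_m)^n$ with weight function $\ZZ_k$, and then to evaluate the resulting character sum using Lemma~\ref{lemma.chi_y_sum_x_m-1_-1} and Lemma~\ref{lemma.generating_functions}. Since $(\Z_m)^n$ is abelian, $\chi^\y(\id)=1$ for every irreducible character, so Theorem~\ref{theorem.spectra_weighted_normal_Cayley_graphs} immediately gives statement (2) (the multiplicity contribution is $\chi^\y(\id)^2=1$) and identifies the eigenvalue as
\begin{equation*}
\lambda_\y = \sum_{\x\in(\Z_m)^n}\binom{\ZZ(\x)}{k}\,\chi^\y(\x) = \sum_{z=0}^n\binom{z}{k}\sum_{\substack{\x\in(\Z_m)^n\\ \ZZ(\x)=z}}\chi^\y(\x).
\end{equation*}
Setting $t=\ZZ(\y)$ and substituting Lemma~\ref{lemma.chi_y_sum_x_m-1_-1} for the inner sum turns this into a double sum over $z$ and an auxiliary index $\ell$.

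The next step is purely combinatorial bookkeeping: after the substitution $j=n-z-\ell$ the expression becomes $\lambda_\y=\sum_{\ell\ge0}(m-1)^\ell\binom{t}{\ell}\sum_{j\ge0}(-1)^j\binom{n-t}{j}\binom{n-\ell-j}{k}$. I would then rewrite $\binom{n-\ell-j}{k}=\binom{k+M-j}{M-j}$ with $M=n-\ell-k$, so the inner $j$-sum is exactly the left-hand side of Lemma~\ref{lemma.generating_functions} with $N=n-t$; the hypothesis $M+k\ge N$ reduces to $\ell\le t$, which is the only range where $\binom{t}{\ell}\ne0$. Lemma~\ref{lemma.generating_functions} then collapses the inner sum to $\binom{t-\ell}{n-\ell-k}$, leaving $\lambda_\y=\sum_\ell(m-1)^\ell\binom{t}{\ell}\binom{t-\ell}{n-\ell-k}$.

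To finish I would use the identity $\binom{t}{\ell}\binom{t-\ell}{n-\ell-k}=\binom{t}{n-k}\binom{n-k}{\ell}$ (both sides equal $t!/(\ell!\,(n-k-\ell)!\,(t-n+k)!)$) and pull $\binom{t}{n-k}$ out of the sum; the binomial theorem gives $\sum_\ell\binom{n-k}{\ell}(m-1)^\ell=m^{n-k}$, hence $\lambda_\y=m^{n-k}\binom{\ZZ(\y)}{n-k}$, which is (1). Part (3) is then immediate, since this closed form is visibly an integer (alternatively, $\lambda_\y\in\Q$ and is an algebraic integer because $\A_\Bin((\Z_m)^n,\ZZ,k)$ has integer entries). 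For (4), non-negativity is clear, and $\binom{\ZZ(\y)}{n-k}\le\binom{n}{n-k}=\binom{n}{k}$ because $\ZZ(\y)\le n$; as in Remark~\ref{remark.laplacian_Sm} the upper bound can also be read off the weighted degree $d_\Bin((\Z_m)^n,\ZZ,k)=m^{n-k}\binom{n}{k}$ (computed by the same manipulation) together with positive semidefiniteness of the Laplacian.

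I expect the only genuine obstacle to be the middle paragraph: coercing the double binomial sum into the precise shape of Lemma~\ref{lemma.generating_functions} and checking that the indices falling outside that lemma's hypotheses contribute zero anyway. A shorter route, which avoids Lemma~\ref{lemma.chi_y_sum_x_m-1_-1} entirely, is to expand $\binom{\ZZ(\x)}{k}=\sum_{|S|=k}\prod_{i\in S}[x_i=0]$, factor the sum $\sum_\x\chi^\y(\x)\prod_{i\in S}[x_i=0]$ coordinatewise using $\chi^\y(\x)=\prod_i\zeta^{y_i x_i}$, and observe that such a set $S$ contributes $m^{n-k}$ precisely when it contains all $n-\ZZ(\y)$ nonzero coordinates of $\y$ and $0$ otherwise; counting these $S$ gives $\binom{\ZZ(\y)}{n-k}$ at once. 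Either way the four claims follow.
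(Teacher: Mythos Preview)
Your primary argument is essentially identical to the paper's: the same invocation of Theorem~\ref{theorem.spectra_weighted_normal_Cayley_graphs}, the same substitution of Lemma~\ref{lemma.chi_y_sum_x_m-1_-1}, the same change of index $p=n-z-\ell$ to feed Lemma~\ref{lemma.generating_functions} with $M=n-\ell-k$, $N=n-t$, the same observation that $\ell>t$ is harmless because $\binom{t}{\ell}=0$, and the same final identity $\binom{t}{\ell}\binom{t-\ell}{n-\ell-k}=\binom{t}{n-k}\binom{n-k}{\ell}$ followed by the binomial theorem. Parts (2)--(4) are handled the same way too.

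The alternative you sketch in your last paragraph, however, is \emph{not} in the paper and is genuinely simpler. Writing $\binom{\ZZ(\x)}{k}=\sum_{|S|=k}\prod_{i\in S}[x_i=0]$ and factoring $\sum_{\x}\chi^{\y}(\x)\prod_{i\in S}[x_i=0]$ coordinate by coordinate bypasses both Lemma~\ref{lemma.chi_y_sum_x_m-1_-1} and Lemma~\ref{lemma.generating_functions} entirely: each $S$ contributes $m^{n-k}$ exactly when $S$ contains the support of $\y$, and counting such $S$ gives $\binom{\ZZ(\y)}{k-(n-\ZZ(\y))}=\binom{\ZZ(\y)}{n-k}$ directly. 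The paper's route has the merit of exercising the two preparatory lemmas it proves, but your shortcut is the cleaner proof of this particular theorem.
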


\begin{remark}\label{remark.inequalities_saturated_Zm^n}
The latter inequality in Equation \eqref{equation.bound_lambda_mu_Zm^n} is always saturated. {To see} this, choose $\y = \mathbf{0}$. On the other hand, the former inequality in Equation \eqref{equation.bound_lambda_mu_Zm^n} is saturated if and only if $k < n$. To see this, choose $\y = \textbf{1}$, for which $\binom{\ZZ(\y)}{n-k} = 0$ if $k < n$, and conversely $\lambda_\y = 1$ for all $\y$ if $k=n$.
\end{remark}

\begin{proof}
The theorem is stated in the same form as of Theorem \ref{theorem.eigenvalues_binomial_Cayley_graph_Sm}, although (3) and (4) trivially follow from (1) in this case. Again, (2) follows from Theorem \ref{theorem.spectra_weighted_normal_Cayley_graphs}, and the fact that all irreducible characters $\chi$ of abelian groups have degree $\chi(\id)$ equal to $1$. It remains to prove (1). Following Theorem \ref{theorem.spectra_weighted_normal_Cayley_graphs} and Lemma \ref{lemma.chi_y_sum_x_m-1_-1}, writing $t=\ZZ(\y)$, we know the eigenvalue associated with $\y$ is
\begin{equation*}
\begin{split}
\lambda_\y &= \sum_{\x \in (\Z_m)^n} \binom{\ZZ(\x)}{k} \cdot \chi^\y(\x) \\
&= \sum_{z=k}^n \ \binom{z}{k} \cdot \sum_{\substack{\x \in (\Z_m)^n \\ \ZZ(\x) = z}}
\chi^\y(\x) \\
&= \sum_{z=k}^n \binom{z}{k} \cdot \sum_{\ell=0}^{n-z} (m-1)^\ell \cdot (-1)^{n-z-\ell} \cdot \binom{t}{\ell} \cdot \binom{n-t}{n-z-\ell} \\
&= \sum_{\ell=0}^{n-k} (m-1)^\ell \cdot \binom{t}{\ell} \cdot \sum_{z=k}^{n-\ell} (-1)^{n-z-\ell} \cdot \binom{n-t}{n-z-\ell} \cdot \binom{z}{k}.
\end{split}
\end{equation*}
For a fixed $\ell$, we set $p=n-z-\ell$, $M=n-\ell-k$, $N=n-t$, and $k=k$. If $M+k \ge N$, that is, $\ell \le t$, we can apply Lemma \ref{lemma.generating_functions} and rewrite the inner sum as $\binom{t-\ell}{n-\ell-k}$. If instead $M+k < N$, that is, $t < \ell$, the inner sum is multiplied by $\binom{t}{\ell}=0$, so that we can rewrite is as $\binom{t-\ell}{n-\ell-k}$ also in this case. We can then continue the chain of equalities as
\begin{equation*}
\begin{split}
\lambda_\y &= \sum_{\ell=0}^{n-k} (m-1)^\ell \cdot \binom{t}{\ell} \cdot \binom{t-\ell}{n-\ell-k}\\
&= \sum_{\ell=0}^{n-k} (m-1)^\ell \cdot \binom{n-k}{\ell} \cdot \binom{t}{n-k} \\
&= m^{n-k} \cdot \binom{t}{n-k}.
\end{split}
\end{equation*}
As $t=\ZZ(\y)$, the thesis follows.
\end{proof}

\begin{remark}
    {Any time} a binomial with negative entries appears in the argument of the above proof, it is effectively multiplied by a factor $0$. We thus avoid specifying the definition of binomial coefficients with negative entries.
\end{remark}

\begin{remark}\label{remark.laplacian_Zm^n}
As one may expect, {we} can prove the inequality $\lambda_\y \le m^{n-k} \cdot \binom{n}{k}$, also in this context, without relying on the explicit form of the eigenvalues $\lambda_\y$. As done in Remark \ref{remark.laplacian_Sm}, we consider the Laplacian matrix
\begin{equation*}
\LL_\Bin((\Z_m)^n, \ZZ, k) = d_\Bin((\Z_m)^n, \ZZ, k) \cdot \II - \A_\Bin((\Z_m)^n, \ZZ, k).
\end{equation*}
An eigenvalue $\lambda_\y$ of $\A_\Bin((\Z_m)^n, \ZZ, k)$ satisfies $\lambda_\y \le d_\Bin((\Z_m)^n, \ZZ, k)$, as the Laplacian matrix is positive semi-definite (since we assume that $\omega$ takes non-negative values). Moreover, setting $r=n-z$,
\begin{equation*}
\begin{split}
d_\Bin((\Z_m)^n, \ZZ, k) &= \sum_{\x \in (\Z_m)^n} \binom{\ZZ(\x)}{k} \\
&= \sum_{z=k}^n \binom{z}{k} \cdot \big| \{ \x \in (\Z_m)^n \text{ such that } \ZZ(\x) = z \} \big| \\
&= \sum_{z=k}^n \binom{z}{k} \cdot \binom{n}{z} \cdot (m-1)^{n-z} \\
&= \sum_{r=0}^{n-k} \binom{n-r}{k} \cdot \binom{n}{r} \cdot (m-1)^r \\
&= \sum_{r=0}^{n-k} \binom{n}{k} \cdot \binom{n-k}{r} \cdot (m-1)^r \\
&= m^{n-k} \cdot \binom{n}{k}.
\end{split}
\end{equation*}
\end{remark}

\begin{corollary}\label{corollary.rank_binomial_Cayley_graphs_Zm^n}
The spectrum of $\A_\Bin((\Z_m)^n, \ZZ, k)$ is formed by:
\begin{enumerate}[label = \roman*)]
\item The eigenvalue $0$, with multiplicity $\sum_{t<n-k} \binom{n}{t} \cdot (m-1)^{n-t}$.
\item For each $t \in \{n-k, n-k+1, \dots, n\}$, the eigenvalue $m^{n-k} \cdot \binom{t}{n-k}$, with multiplicity $\binom{n}{t} \cdot (m-1)^{n-t}$.
\end{enumerate}
In particular, the kernel of $\A_\Bin((\Z_m)^n, \ZZ, k)$ has dimension
\begin{equation*}\label{equation.dimension_kernel_Zm^n}
\sum_{t<n-k}\binom{n}{t} \cdot (m-1)^{n-t}.
\end{equation*}
Equivalently, the rank of $\A_\Bin((\Z_m)^n, \ZZ, k)$ is
\begin{equation*}\label{equation.rank_Zm^n}
\sum_{s \le k} \binom{n}{s} \cdot (m-1)^s.
\end{equation*}
\end{corollary}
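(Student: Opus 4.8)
The plan is to read the entire spectrum directly off Theorem~\ref{theorem.eigenvalues_binomial_Cayley_graph_Zm^n}, organising the irreducible characters of $(\Z_m)^n$ according to their number of zero coordinates. Since $(\Z_m)^n$ is abelian, every irreducible character has degree $1$, so by Theorem~\ref{theorem.spectra_weighted_normal_Cayley_graphs} each of the $m^n = |(\Z_m)^n|$ characters $\chi^\y$ contributes a single eigenvalue $\lambda_\y$ to the spectrum of $\A_\Bin((\Z_m)^n, \ZZ, k)$, and, because $\sum_\chi \chi(\id)^2 = m^n$ equals the size of the matrix, these $m^n$ values (with multiplicity) exhaust the spectrum.

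First I would note that, by Theorem~\ref{theorem.eigenvalues_binomial_Cayley_graph_Zm^n}(1), the eigenvalue $\lambda_\y = m^{n-k}\binom{\ZZ(\y)}{n-k}$ depends on $\y$ only through $t := \ZZ(\y)$. The number of $\y \in (\Z_m)^n$ with exactly $t$ zero coordinates is $\binom{n}{t}(m-1)^{n-t}$: choose which $t$ of the $n$ coordinates vanish, and fill the remaining $n-t$ with nonzero entries. Hence for each $t \in \{0, \dots, n\}$ the value $m^{n-k}\binom{t}{n-k}$ is produced by exactly $\binom{n}{t}(m-1)^{n-t}$ of the characters.

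Next I would check distinctness. Since $\binom{t}{n-k} = 0$ precisely when $t < n-k$, all characters $\chi^\y$ with $\ZZ(\y) < n-k$ contribute the eigenvalue $0$, giving a kernel of dimension $\sum_{t<n-k}\binom{n}{t}(m-1)^{n-t}$. For $t \ge n-k$ the numbers $\binom{t}{n-k}$ are strictly increasing in $t$ (starting from $\binom{n-k}{n-k}=1$), so the nonzero values $m^{n-k}\binom{t}{n-k}$, $t = n-k, \dots, n$, are pairwise distinct; no merging of multiplicities (cf. Remark~\ref{remark.multiplicity_eigenvalue}) occurs among them, and each has multiplicity exactly $\binom{n}{t}(m-1)^{n-t}$. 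This yields (i) and (ii).

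Finally, for the rank I would subtract the kernel dimension from $m^n = \sum_{t=0}^{n}\binom{n}{t}(m-1)^{n-t}$, the latter being the binomial expansion of $(1+(m-1))^n$; this gives $\sum_{t \ge n-k}\binom{n}{t}(m-1)^{n-t}$, and reindexing with $s = n-t$ and $\binom{n}{n-s}=\binom{n}{s}$ turns it into $\sum_{s \le k}\binom{n}{s}(m-1)^s$. There is no genuine obstacle here; the only point needing a moment's care is confirming that $\sum_{t=0}^n\binom{n}{t}(m-1)^{n-t} = m^n$, which guarantees the list of eigenvalues is complete and the stated multiplicities are exact rather than merely lower bounds.
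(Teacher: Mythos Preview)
Your proof is correct and follows essentially the same route as the paper: count the $\y \in (\Z_m)^n$ with $\ZZ(\y)=t$ as $\binom{n}{t}(m-1)^{n-t}$ and then invoke Theorem~\ref{theorem.eigenvalues_binomial_Cayley_graph_Zm^n}. The paper's proof is two sentences and leaves implicit the details you spell out (distinctness of the nonzero eigenvalues for $t\ge n-k$, completeness via $\sum_t \binom{n}{t}(m-1)^{n-t}=m^n$, and the reindexing $s=n-t$ for the rank), but the underlying argument is the same.
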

\begin{proof}
We remark that the number of elements $\y \in (\Z_m)^n$ for which $\ZZ(\y) = t$ is $\binom{n}{t}\cdot (m-1)^{n-t}$. Indeed, the set of $t$ zeros of $\y$ can be chosen in $\binom{n}{t}$ ways and the other $n-t$ coordinates can take $m-1$ values each. Then the result follows from Theorem \ref{theorem.eigenvalues_binomial_Cayley_graph_Zm^n}.
\end{proof}

\subsection{Binomial weight functions of powers of cyclic groups in terms of representations}\label{section.binomial_weight_function_Zm^n_representation}
As done in Section \ref{section.binomial_weight_function_Sm_representation}, we express the weight function $\ZZ_k(\sigma) = \binom{\ZZ(\sigma)}{k}$ in terms of a specific representation of $(\Z_m)^n$.\\

Let $V$ be the vector space over $\C$ with basis $\{e_0, \dots, e_{m-1}\}$. We read the indices $\{0, \dots, m-1\}$ as elements of $\Z_m$. The group $\Z_m$ acts on $V$ by $x \cdot e_j$ = $e_{j+x}$. We consider the $n$-th direct power of $V$, $V^n$, with basis $\{e_{i,j}\}$, for the indices $i \in \{1, \dots, n\}$, $j \in \{0, \dots, m-1\}$. The action of $\Z_m$ on $V$ extends to an action of $(\Z_m)^n$ on $V^n$: $\x \cdot e_{i,j} = e_{i,j+x_i}$. We now take the $k$-th tensor power of $V^n$, $(V^n)^{\otimes k}$, with basis
$\{e_{i_1, j_1} \otimes \cdots \otimes e_{i_k, j_k}\}$, for all possible indices $i_1, \dots, i_k \in \{1, \dots, n\}$, and $j_1, \dots, j_k \in \{0, \dots, m\}$. The action of $(\Z_m)^n$ on $V^n$ extends to an action of the same $(\Z_m)^n$ on $(V^n)^{\otimes k}$, by
\begin{equation*}
\x \cdot e_{i_1, j_1} \otimes \cdots \otimes e_{i_k, j_k} = e_{i_1, j_1 + x_{i_1}} \otimes \cdots \otimes e_{i_k, j_k + x_{i_k}}.
\end{equation*}
Let $W$ be the sub-space of $(V^n)^{\otimes k}$ with basis $\BB$ given by the elements $e_{i_1, j_1} \otimes \cdots \otimes e_{i_k, j_k}$ for which $1 \le i_1 < \cdots < i_k \le n$. $W$ has dimension $m^{k} \cdot \binom{n}{k}$ and is in fact a $(\Z_m)^n$-sub-module of $(V^n)^{\otimes k}$.\\

Let $\chi_k$ be the character associated with this representation. Since $\x \in (\Z_m)^n$ permutes the elements of $\BB$, $\chi_k(\x)$ coincides with the number of elements of $\BB$ that are fixed by $\x$, that is,
\begin{equation*}
\chi_k(\x) = \binom{\ZZ(\x)}{k} \cdot m^k = \ZZ_k(\x) \cdot m^k.
\end{equation*}
As this holds for any $\x \in (\Z_m)^n$,
\begin{equation*}
\ZZ_k = \frac{1}{m^k} \cdot \chi_k.
\end{equation*}
From this description and with the same argument used in Section \ref{section.binomial_weight_function_Sm_representation}, we can deduce that every eigenvalue $\lambda_\y$ of $\A_\Bin((\Z_m)^n, \ZZ, k)$ is a non-negative integer number, independently of Theorem \ref{theorem.eigenvalues_binomial_Cayley_graph_Zm^n}.

\section{Degeneracies in particle-box systems}\label{section.degeneracies}

Let us elaborate on the game presented in 
Section \ref{section.introduction}. Consider a set of $n$ particles $\calN = \{0, \dots, n-1\}$ and let $\calM = \{0,\dots, m-1\}$ be a set of $m$ boxes. Denote by $\Sigma \subseteq \{f\colon \calN \to \calM\}$ the so-called set of admissible functions (or arrangements). Let $\Delta$ be the space of probability measures on $\Sigma$,
\begin{equation*}
    \Delta = \Set{ p \in \mathscr{F}(\Sigma, \R) | p(f) \ge 0 \text{ for all } f \in \Sigma, \text{ and } \sum_{f \in \Sigma} p(f) = 1 },
\end{equation*}
where $\mathscr{F}(\Sigma, \R)$ is the space of functions from $\Sigma$ to $\R$. The set of $n$ particles is recursively placed in the $m$ boxes, in a position $(f(1), \dots, f(n)) \in \calM^n$, where each arrangement $f$ is chosen with probability $p(f)$ on every step. Let $k \le n$ be fixed and consider $k$ distinct ordered particles $i_1 < \dots < i_k$ in $\{1, \dots, n\}$ such that
\begin{equation*}
    f(i_1, \dots, i_k) = (f(i_1), \dots, f(i_k)) \in \M^k.
\end{equation*}
We denote by $\ii$ a choice of such $i_1, \dots, i_k$ and write $\ii \in \binom{\mathcal{N}}{k}$. We denote by $\jj = (j_1, \dots, j_k)$ an element in $\calM^k$. Notice that $\jj$ does not need to be increasing. We can always restrict ourselves to considering only admissible sequences $\jj$, i.e. {sequences for which} there exists $f \in \Sigma$ with $f(\ii) = \jj$ for some $\ii$. A multi-index $\ii \in \binom{\calN}{k}$ identifies a marginal probability distribution $p_{(\ii, \cdot)}$ {defined} as
\begin{equation}\label{eq:restriction_k}
    p_{(\ii, \jj)} = \sum_{\substack{f \in \Sigma \\f(\ii) = \jj}} p(f).
\end{equation}
This denotes the probability that each particle $i_1, \dots, i_k$ is placed in box $j_1, \dots, j_k$, respectively and simultaneously, as the game unfolds. We refer to the $\ii$-indexed family of marginal probability measures satisfying Equation \eqref{eq:restriction_k} as the \emph{$k$-th restriction of $p$} and denote it by $p_{|k}$. Altogether, the tuple $(k,n,m,\Sigma, p)$ defines a \emph{particle-box system}.\\

Natural choices of $\Sigma$ include
\begin{equation*}
    \Sigma_{\text{all}} = \{ \ f\colon \calN \to \calM \ \},
\end{equation*}
and if $n = m$
\begin{equation*}
    \Sigma_{\text{bij}} = \{ \ f\colon \calN \to \calM, \ f \text{ bijective} \ \} = \{ \text{ permutations of } \calM \ \}.
\end{equation*}

Let us consider $M^k$ to be the matrix representation of the set of equations given by Equation \eqref{eq:restriction_k}. This is a matrix whose elements are indexed by a pair $(\ii, \jj)$ and a map $f \in \Sigma $, such that
\begin{equation}\label{eq:matrix_M}
    M^k_{({\ii},{\jj}), f} = \begin{cases}
    1 &\text{if }  f(i_1, \dots, i_k) = (j_1, \dots, j_k),\\    
    0 &\text{otherwise.}
\end{cases}
\end{equation}

Assume we have full knowledge over a given $k$-restriction $p_{|k}$, i.e. we are only allowed to observe the movement of any $k$ particles. Our goal is to find the underlying probability distributions $p$ satisfying
\begin{equation}\label{eq:degeneracy}
    M^k p = p_{|k}, \quad p \in \Delta.
\end{equation}
Solutions to this problem depend explicitly on the kernel of $M^k$. Indeed, given an original $p^* \in \Delta$ such that $M^k p^* = p_{|k}$, the set
\begin{equation*}\label{eq:set_of_solutions}
(p^* +  \ker{M^k}) \cap \Delta
\end{equation*}
contains all suitable solutions $p$. Uniqueness of $p$ guarantees that the original distribution $p^*$ can be retrieved from the restriction $p_{|k}$ of the particle-box system (allowing the player to win). This is indeed possible in some cases.

\begin{example}\label{ex.identity}
    Let $k = 1$, $n = m$ and $\Sigma = \Sigma_{\mathrm{all}}$ or $\Sigma_{\mathrm{bij}}$. Assume that $p_{|1}$ is given by $p_{ij} = \delta_{ij}$ for any $i,j \in \calM$. Then the system is uniquely determined by $p = \delta_{\id}$, the atomic distribution sitting on the identity $\id \colon \calM \to \calM$ (see Remark \ref{rmk.degeneracy_122} for further details).
\end{example}

If Equation \eqref{eq:degeneracy} does not have a unique solution, we are unable to recover the original probability $p^*$. This gives rise to a so-called degeneracy in the particle-box system.

\begin{definition}
We define the \emph{degeneracy} of $p^*$ to be the topological dimension of the topological space $(p^* + \ker M^k) \cap \Delta$.
\end{definition}

As $(p^* + \ker M^k) \cap \Delta$ is the intersection of an affine subspace of $\mathscr{F}(G, \R)$ and a simplex, its topological dimension is well defined. Notice that the degeneracy of $p^*$ is equal to $0$ if and only if $p^*$ is the unique element in $(p^* + \ker M^k) \cap \Delta$. For a general $\Sigma$, we state the following result regarding the degeneracy of $p^*$.

\begin{theorem}\label{thm.degeneracy-k-hom}
    The degeneracy of $p^*$ is at most $\dim(\ker M^k)$. If $p^*$ is either in the interior or on the codimension-$1$ boundary of $\Delta$, then the degeneracy of $p^*$ is equal to $\dim(\ker M^k)$.
\end{theorem}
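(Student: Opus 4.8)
The plan is to work with the decomposition $(p^* + \ker M^k)\cap\Delta$ as the intersection of the affine subspace $V = p^* + \ker M^k$ with the simplex $\Delta \subseteq \mathscr{F}(\Sigma,\R)$. For the upper bound, I would note that this intersection is contained in $V$, which is an affine space of dimension $\dim(\ker M^k)$; since topological dimension is monotone under inclusion for subsets of $\R^N$, the degeneracy of $p^*$ is at most $\dim(\ker M^k)$. This part is immediate and requires no work beyond invoking monotonicity of covering dimension.

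For the equality when $p^*$ lies in the interior of $\Delta$, the strategy is to produce a small open (in $V$) neighbourhood of $p^*$ that is entirely contained in $\Delta$. Writing $U$ for the interior of $\Delta$ (relative to the hyperplane $\sum_f p(f) = 1$), the point $p^*$ has $p^*(f) > 0$ for all $f \in \Sigma$; choosing $\varepsilon < \min_f p^*(f)$ and intersecting the $\varepsilon$-ball around $p^*$ with $V$ gives a relatively open subset of $V$ on which all coordinates stay positive, hence a subset of $\Delta$. A nonempty open subset of a $d$-dimensional affine space has topological dimension $d$, so the degeneracy is at least $\dim(\ker M^k)$, and combined with the upper bound it is exactly that.

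For the codimension-$1$ boundary case, the idea is the same but carried out inside the face $F$ of $\Delta$ on which $p^*$ sits. If $p^*$ is on the codimension-$1$ boundary, exactly one coordinate, say $p^*(f_0)$, vanishes and all others are strictly positive; the relevant face $F$ is the simplex $\{p \in \Delta : p(f_0) = 0\}$, of dimension $|\Sigma| - 2$. Here I would need the observation that $\ker M^k$ is contained in the hyperplane $\{p : \sum_f p(f) = 0\}$, because every row of $M^k$ summed appropriately forces this — more precisely, for a fixed admissible $\ii$, summing Equation \eqref{eq:matrix_M} over all $\jj$ gives $\sum_{\jj} M^k_{(\ii,\jj),f} = 1$ for every $f$, so the all-ones functional annihilates $\ker M^k$. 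Hence $V = p^* + \ker M^k$ lies in the affine hyperplane $\sum_f p(f) = 1$, and near $p^*$ the only active constraint cutting $\Delta$ down is $p(f_0) \ge 0$. One then shows that $V$ is not contained in the hyperplane $\{p(f_0) = 0\}$: if it were, $V$ would lie inside $F$ and the argument of the interior case applied within $F$ would already give the bound; otherwise the linear functional $p \mapsto p(f_0)$ is nonconstant on $V$, and a relatively open half-ball of $V$ around $p^*$ on the side $p(f_0) > 0$ lies in the interior of $\Delta$, again of full dimension $\dim(\ker M^k)$.

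The main obstacle I anticipate is the bookkeeping in the boundary case: one must be careful that moving off the face $F$ in the direction $p(f_0) > 0$ genuinely stays inside $\Delta$, which uses that all the \emph{other} coordinates of $p^*$ are strictly positive (this is exactly what ``codimension-$1$ boundary'' encodes) so that small perturbations keep them positive, and that $\ker M^k$ does contain a vector with nonzero $f_0$-component whenever $V \not\subseteq \{p(f_0)=0\}$. The statement is essentially a soft topological-dimension argument once these linear-algebraic facts about $\ker M^k \subseteq \{\sum_f p(f) = 0\}$ and the local geometry of $\Delta$ are pinned down; no hard computation is involved, but the two cases (interior, codimension-$1$ boundary) must be handled separately and the degenerate sub-case $V \subseteq F$ acknowledged.
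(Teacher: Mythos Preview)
Your proposal is correct and follows essentially the same route as the paper's proof: the upper bound by inclusion in $p^* + \ker M^k$, the key observation that $\ker M^k \perp \mathbf{1}$ (the paper computes $\mathbf{1}^T M^k = \binom{n}{k}\cdot\mathbf{1}^T$, equivalent to your row-sum argument for a fixed $\ii$), and then a local open-ball or half-ball of $V$ inside $\Delta$ in the interior and codimension-$1$ cases respectively. Your treatment is in fact slightly more careful than the paper's in the boundary case, since you explicitly separate the degenerate sub-case $V \subseteq \{p(f_0)=0\}$, which the paper's phrase ``a half-space of $p^* + \ker M^k$ is contained in $\Delta$'' glosses over.
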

\begin{proof}
    The former statement follows from the fact that $(p^* + \ker M^k) \cap \Delta \subseteq p^* + \ker M^k$. For the latter, we show that the kernel of $M^k$ is parallel to the affine hyperplane containing $\Delta$. Denote by $\mathbf{1}$ the vector of ones.
    Then,
    \begin{equation*}
        (\mathbf{1}^T M^k)_f = \sum_{(\ii, \jj) \in \binom{\calN}{k} \times \M^k} \delta(f(\ii) = \jj) = \sum_{\ii \in \binom{\calN}{k}} 1 = \binom{n}{k}
    \end{equation*}
    is a constant vector. It follows that $\ker M^k$ is orthogonal to $\mathbf{1}$. Finally, if $p^*$ belongs to the interior of $\Delta$, locally around $p^*$, $p^*+\ker M^k$ is contained in $\Delta$. If $p^*$ lies instead on the codimension-$1$ boundary of $\Delta$, locally around $p^*$, a half-space of $p^*+\ker M^k$ is contained in $\Delta$. In both cases, the topological dimension of $(p^* + \ker M^k) \cap \Delta$ coincides with $\dim(\ker M^k)$.
\end{proof}

\begin{remark}\label{rmk.degeneracy_122}
    As already pointed out in Example \ref{ex.identity}, the degeneracy of the system depends on the choice of $p^*$. Assume that $k=1$, $n=m=2$, and $\Sigma=\Sigma_{\mathrm{all}}$. Particles are denoted by the numbers $1$ and $2$, while boxes are denoted by the letters $A$ and $B$. The matrix $M^1$ is
\begin{equation*}
\begin{tabular}{c|c}
        & $\tiny{\begin{matrix} AA & AB& BA &BB \end{matrix}}$\\ \hline \vspace{-.25cm} \\
       $\tiny{\begin{matrix}\vspace{.17cm} 1\mapsto A\\\vspace{.17cm}1\mapsto B\\\vspace{.17cm}2\mapsto A\\\vspace{.17cm}2\mapsto B \\\vspace{-.46cm}\end{matrix}}$&
        ${\begin{pmatrix}
            \ &1\ &&1\ &&0\ &&0\ &\\
            \ &0\ &&0\ &&1\ &&1\ &\\
            \ &1\ &&0\ &&1\ &&0\ &\\
            \ &0\ &&1\ &&0\ &&1\ &\\
        \end{pmatrix}}$
        \end{tabular}
\vspace{.15cm}
\end{equation*}
which has a one-dimensional kernel spanned by the vector $v = (1, -1, -1, 1)$. Adding or subtracting $v$ corresponds to increasing or decreasing the correlation between the variables ``location of particle $1$'' and ``location of particle $2$''. We show the $3$-dimensional simplex $\Delta$ in Figure \ref{fig:tetrahedron}. If an atomic distribution $p^*$ is chosen (say $e_1 = (1, 0, 0, 0)$, corresponding to placing both particles in the first box with probability $1$), $p^* + \ker M^1$ intersects $\Delta$ only at $p^*$. If a distribution $p^*$ in the interior of $\Delta$ is chosen instead (say $\mu = (1/4, 1/4, 1/4, 1/4)$, corresponding to the uniform distribution), then its degeneracy is $1$. Notice that $p^* + \ker M^1$ intersects the boundary of $\Delta$ at two points: $\mu_{14} = \mu + v/2 = (1/2, 0, 0, 1/2)$, corresponding to placing both particles in the same box, chosen with probability $1/2$ (perfect correlation), and $\mu_{23} = \mu - v/2 = (0, 1/2, 1/2, 0)$, corresponding to placing the particles in opposite boxes, chosen with probability $1/2$ (perfect anticorrelation).
\end{remark}

\begin{figure}[htb]
    \centering
    \includegraphics[scale=0.45]{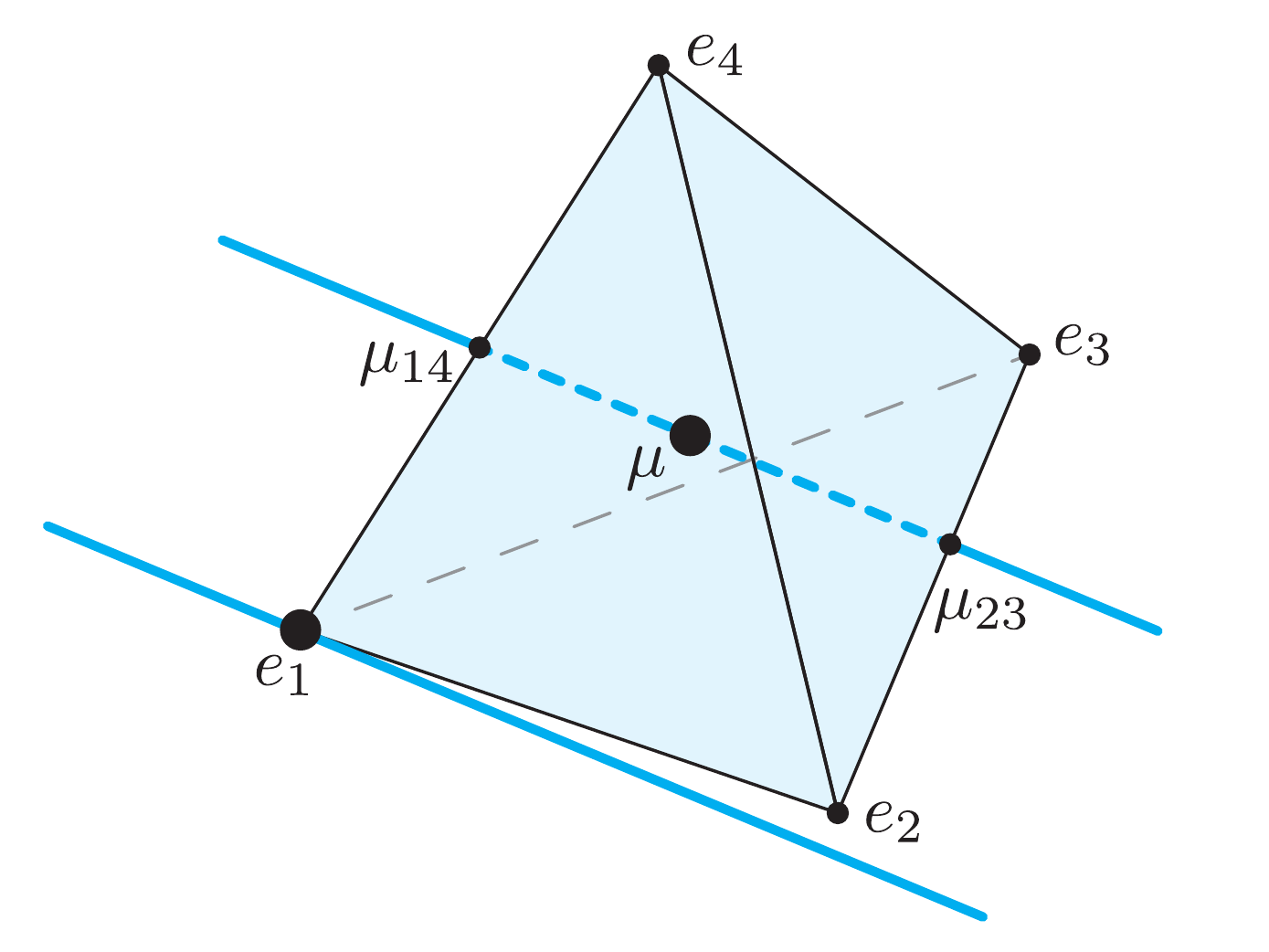}
    \caption{Graphical visualisation of $(p^* + \ker M^k) \cap \Delta$, for $k=1, n=m=2$, $\Sigma=\Sigma_\mathrm{all}$, and $p^* = e_1$ and $p^* = \mu$. In these two cases, the degeneracy of $p^*$ is $0$ and $1$, respectively.}
    \label{fig:tetrahedron}
\end{figure}

As an application, we specialise Theorem \ref{thm.degeneracy-k-hom} to the study of the degeneracy of the particle-box system when $\Sigma = \Sigma_\mathrm{all}$ or $\Sigma_\mathrm{bij}$. It is clear that the kernel of $M^k$ and {the kernel} of the square matrix $A^k = (M^k)^TM^k$ coincide. This simple trick allows us to link this problem to binomial Cayley graphs: a quick computation shows that $A^k$ {is} in fact the adjacency matrix $\mathbf{A}_\Bin((\Z_m)^n, \ZZ, k)$ for $\Sigma_\mathrm{all}$, and $\mathbf{A}_\Bin(S_m, \FF, k)$ for $\Sigma_\mathrm{bij}$.

\subsection{Random maps}\label{sec.random_coalescing_maps}
We begin by considering $\Sigma_{\text{all}}$, allowing any two particles to be placed in the same box. In this setting, we identify the set of states $\calM$ with the cyclic group $\Z_m$. The set of allowed maps $\Sigma_{\text{all}}$ is therefore identified with the $n$-th power of the cyclic group $\Z_m$, i.e. $\Sigma_{\text{all}} \simeq (\Z_m)^n$. \\

It has recently been shown in \cite[Theorem 27]{daCosta2022} that the rank of $M^k$ for each triple $(k, n, m)$ with $1\le k \le n$, denoted by $R^{n,m}_k$, is given by the recursion formula
\begin{equation}\label{eq:recursion}
    R^{n,m}_{k} = R^{n,m}_{k-1} + \binom{n}{k}(m^k - R^{k,m}_{k-1}), \quad R^{n,m}_0 = 1.
\end{equation}

Their proof is based on an inductive argument and the formula follows from explicitly counting the linear restrictions inherited by what the authors refer to as the $(k-1)$-point motion. The initial condition $R^{n,m}_0 = 1$ represents the fact that $p$ is a probability measure (the rank of $M^0 = (1, \dots, 1)$ is $1$). We propose a different approach to answer this question based on the study of the weighted Cayley graph whose adjacency matrix is given by
\begin{equation*}
\begin{split}
    A^k_{fg} &= (M^k)^TM^k\\ & =  \sum_{({\ii}, {\jj}) \in \binom{\calN}{k} \times \calM^k} M^k_{({\ii}, {\jj}), f} \cdot M^k_{({\ii}, {\jj}), g}\\
    & = \left| \ \Set{{\ii} \in \binom{\calN}{k} | f({\ii}) = g ({\ii})} \ \right|\\
     &= \binom{\ZZ( g - f ) }{k},
\end{split}
\end{equation*}
with $f, g \in (\Z_m)^n$. {As mentioned, the} matrix $A^k$ coincides with the adjacency matrix $\A_\Bin((\Z_m)^n, \ZZ, k)$ of the binomial Cayley graph $\Gamma_\Bin((\Z_m)^n, \ZZ, k)$ presented in Section \ref{section.binomial_Cayley_graphs_on_Zm^n}. It follows from Corollary \ref{corollary.rank_binomial_Cayley_graphs_Zm^n} that the rank of $A^k$, and hence the number of restrictions $R^{n,m}_k$, is given by
\begin{equation}\label{eq:R_all}
R^{n,m}_{k} = \sum_{t=0}^k \binom{n}{t} \cdot (m-1)^t,
\end{equation}
which solves the recurrence relation provided in Equation \eqref{eq:recursion}. We provide an example of this setting in Example \ref{ex:compatible-non-observable}. It follows from Theorem \ref{thm.degeneracy-k-hom} and Corollary \ref{corollary.rank_binomial_Cayley_graphs_Zm^n} that:

\begin{corollary}\label{cor.degeneracy_random_maps}
    For $\Sigma=\Sigma_{\mathrm{all}}$, the degeneracy of $p^*$ is at most
    \begin{equation*}
        \sum_{t<n-k} \binom{n}{t} \cdot (m-1)^{n-t}.
    \end{equation*}
    If $p^*$ is either in the interior or on the codimension-$1$ boundary of $\Delta$, then the equality of dimensions holds.
\end{corollary}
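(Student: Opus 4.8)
The plan is to combine the two ingredients already assembled in the excerpt: the general degeneracy bound of Theorem \ref{thm.degeneracy-k-hom}, which reduces everything to computing $\dim(\ker M^k)$, and the spectral computation of Corollary \ref{corollary.rank_binomial_Cayley_graphs_Zm^n}, which gives that dimension explicitly. The only real content is the identification $\ker M^k = \ker A^k$ with $A^k = \A_\Bin((\Z_m)^n, \ZZ, k)$, which has in fact already been carried out in the displayed computation of $A^k_{fg} = \binom{\ZZ(g-f)}{k}$ preceding the statement of this corollary.

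First I would recall that for $\Sigma = \Sigma_{\mathrm{all}}$ we identify $\Sigma_{\mathrm{all}} \simeq (\Z_m)^n$, so that $M^k$ is the matrix of \eqref{eq:matrix_M} and the square matrix $A^k = (M^k)^T M^k$ has entries $\binom{\ZZ(g-f)}{k}$; since $\ker M^k = \ker\big((M^k)^T M^k\big) = \ker A^k$ and $A^k = \A_\Bin((\Z_m)^n, \ZZ, k)$, Corollary \ref{corollary.rank_binomial_Cayley_graphs_Zm^n} gives
\begin{equation*}
\dim(\ker M^k) = \sum_{t < n-k} \binom{n}{t} \cdot (m-1)^{n-t}.
\end{equation*}
Then I would simply invoke Theorem \ref{thm.degeneracy-k-hom}: its first assertion yields that the degeneracy of $p^*$ is at most $\dim(\ker M^k)$, which is the claimed upper bound; its second assertion yields that if $p^*$ lies in the interior or on the codimension-$1$ boundary of $\Delta$, then the degeneracy equals $\dim(\ker M^k)$, i.e.\ the equality of dimensions holds. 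This completes the argument.

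Since the proof is a direct concatenation of two previously established results, I do not anticipate any genuine obstacle; the only point worth stating carefully is why $A^k$ coincides with $\A_\Bin((\Z_m)^n, \ZZ, k)$ rather than merely resembling it, and this is already verified by the chain of equalities $A^k_{fg} = \big|\{\ii \in \binom{\calN}{k} : f(\ii) = g(\ii)\}\big| = \binom{\ZZ(g-f)}{k}$ in the text above. Hence the formal proof reduces to a single sentence citing Theorem \ref{thm.degeneracy-k-hom} and Corollary \ref{corollary.rank_binomial_Cayley_graphs_Zm^n}.
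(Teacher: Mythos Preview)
Your proposal is correct and matches the paper's approach exactly: the paper states the corollary immediately after the sentence ``It follows from Theorem \ref{thm.degeneracy-k-hom} and Corollary \ref{corollary.rank_binomial_Cayley_graphs_Zm^n} that:'' with no further proof, relying on the preceding identification $A^k = (M^k)^T M^k = \A_\Bin((\Z_m)^n, \ZZ, k)$ just as you describe.
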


\begin{remark}
    In this section, the matrix $M^k$ corresponds to the restriction matrix of a particle-box system from $n$ observed particles to $k \leq n$. When considering $\Sigma = \Sigma_{\mathrm{all}}$, the matrix $M^k$ introduced in this section falls within the category of so-called inclusion matrices for rainbow $k$-subsets \cite{Qian2023}. As pointed out by the authors in \cite[Section 1]{Qian2023}, the matrix $M^k$ can be interpreted as the matrix $\mathcal{W}^{m, \dots, m}_{k, n}$ introduced in their paper, where $m$ is repeated $n$ times. Independently of (binomial) Cayley graphs and character theory, their line of research provides spectral results for a more general family of matrices, denoted by $\mathcal{W}^{a_1, \dots, a_n}_{t, k}$, including an analogous result of Corollary \ref{cor.degeneracy_random_maps}. Moreover, the authors also obtain a formula for the singular values of such matrices in \cite[Theorem 3.47]{Qian2023}, which coincide with the singular values of $M^k$. These singular values are the square root of the eigenvalues of $\A_\Bin((\Z_m)^n, \ZZ, k)$, described in Theorem \ref{theorem.eigenvalues_binomial_Cayley_graph_Zm^n}. Finally, \cite[Theorem 3.50]{Qian2023} provides a basis of eigenvectors  of $\A_\Bin((\Z_m)^n, \ZZ, k)$ consisting only of $\{0, \pm 1\}$ entries, as pointed out by the authors in \cite[Remark 5.7]{Qian2023}. We refer the reader to \cite{Qian2023} for more details on the topic of inclusion matrices and rainbow $k$-subsets.
\end{remark}

\subsection{Random bijections} Let $n = m$ and consider $\Sigma_{\text{bij}}$, corresponding to the set of permutations of $m$ elements $S_m$. Denote by $\calM^k_{\neq}$ the subset of $\calM^k$ formed by tuples with all different entries. Following the arguments provided in the beginning of this section, the $k$-th restriction matrix $M^k$ is given by Equation \eqref{eq:matrix_M}, now parameterised by $\sigma, \tau \in S_m$, such that 
\begin{equation*}
\begin{split}
    A^k_{\sigma \tau} &= (M^k)^TM^k\\ 
    & =  \sum_{({\ii}, {\jj}) \in \binom{\calM}{k} \times \calM^k_{\neq}} M^k_{({\ii}, {\jj}), \sigma} \cdot M^k_{({\ii}, {\jj}), \tau}\\
    & = \left| \ \Set{ {\ii} \in \binom{\calM}{k} | \sigma(\ii) = \tau (\ii)} \ \right|\\
     &= \binom{\FF(\sigma^{-1}\tau) }{k} \\
     &= \binom{\FF(\tau\sigma^{-1}) }{k},
\end{split}
\end{equation*}
where $\FF$ denotes the number of elements fixed by a permutation in $S_m$. The last equality follows from the fact that $\FF$ is a class function. The matrix $A^k$ coincides with the adjacency matrix $\A_\Bin(S_m, \FF, k)$ of the binomial Cayley graph $\Gamma_{\Bin}(S_m, \FF, k)$ presented in Section \eqref{section.binomial_Cayley_graphs_on_Sm}. It follows from Corollary \ref{corollary.increasing_sequences} that the rank of $M^k$ coincides with the number of permutations in $S_m$ admitting an $(m-k)$-increasing sub-sequence, proving the result proposed in \cite[Conjecture 30]{daCosta2022}. It follows from Theorem \ref{thm.degeneracy-k-hom} and Corollary \ref{corollary.increasing_sequences} that:

\begin{corollary}\label{cor.degeneracy_random_bijections}
    For $\Sigma=\Sigma_{\mathrm{bij}}$, the degeneracy of $p^*$ is at most the number of permutations in $S_m$ with no increasing sub-sequences of length $m-k$. If $p^*$ is either in the interior or on the codimension-$1$ boundary of $\Delta$, then the equality of dimensions holds.
\end{corollary}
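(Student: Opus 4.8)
The plan is to obtain the statement by composing two ingredients already established: the abstract bound of Theorem \ref{thm.degeneracy-k-hom}, which controls the topological dimension of $(p^* + \ker M^k) \cap \Delta$ purely in terms of $\dim \ker M^k$, and the spectral computation of Section \ref{section.binomial_Cayley_graphs_on_Sm}, which identifies $\dim \ker M^k$ with the claimed combinatorial quantity. So the only thing to verify is that $\dim \ker M^k$ equals the number of permutations in $S_m$ with no $(m-k)$-increasing sub-sequence; everything else is a direct invocation.

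First I would recall the elementary fact that for any real matrix $M^k$ one has $\ker M^k = \ker (M^k)^T M^k = \ker A^k$, so $\dim \ker M^k = \dim \ker A^k$. Next I would justify the identification $A^k = \A_\Bin(S_m, \FF, k)$: expanding $A^k_{\sigma\tau} = \sum_{(\ii,\jj)} M^k_{(\ii,\jj),\sigma}\, M^k_{(\ii,\jj),\tau}$ counts the $k$-subsets $\ii \in \binom{\calM}{k}$ on which $\sigma$ and $\tau$ agree, which equals $\binom{\FF(\sigma^{-1}\tau)}{k}$, and since $\FF$ is a class function this equals $\binom{\FF(\tau\sigma^{-1})}{k}$, the $(\sigma,\tau)$ entry of the adjacency matrix of $\Gamma_\Bin(S_m, \FF, k)$. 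This is exactly the computation displayed immediately before the statement, so in the write-up I would simply cite it. I would also note in passing that restricting the row index $\jj$ to the all-distinct tuples $\calM^k_{\neq}$ does not change $\ker M^k$, since for a bijection $\sigma$ the tuple $\sigma(\ii)$ always has distinct entries and the discarded rows are identically zero.

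With the identification in hand, I would invoke Corollary \ref{corollary.increasing_sequences}: it gives $\dim \ker \A_\Bin(S_m, \FF, k)$ equal to the number of permutations in $S_m$ with no increasing sub-sequence of length $m-k$ (obtained there from Corollary \ref{corollary.rank_binomial_Cayley_graphs_Sm}, namely $\sum_{\mu \vdash m,\ \mu_1 < m-k} \chi^\mu(\id)^2$, repackaged via the RSK correspondence in Corollary \ref{corollary.result_cited_on_increasing_sequences}). Hence $\dim \ker M^k$ equals that permutation count. Finally I would feed this into Theorem \ref{thm.degeneracy-k-hom}: the degeneracy of $p^*$, i.e. the topological dimension of $(p^* + \ker M^k) \cap \Delta$, is at most $\dim \ker M^k$, with equality precisely when $p^*$ lies in the interior or on the codimension-$1$ boundary of $\Delta$. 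Substituting the combinatorial value yields both assertions.

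I do not expect a genuine obstacle here, as the corollary is a straightforward concatenation of previously proved results; the only points requiring a little care are transcribing the hypotheses ``$p^*$ in the interior or on the codimension-$1$ boundary'' verbatim from Theorem \ref{thm.degeneracy-k-hom}, and keeping the indexing conventions for $M^k$ (rows indexed by $(\ii,\jj)$ with $\ii \in \binom{\calM}{k}$, $\jj \in \calM^k_{\neq}$) consistent with those used in the entry computation. Both are routine.
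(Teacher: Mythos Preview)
Your proposal is correct and mirrors the paper's own argument exactly: the paper states the corollary as an immediate consequence of Theorem \ref{thm.degeneracy-k-hom} and Corollary \ref{corollary.increasing_sequences}, after having displayed the identification $A^k = \A_\Bin(S_m,\FF,k)$ and noted $\ker M^k = \ker A^k$ just as you do. There is nothing to add.
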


\section{Final remarks}\label{sec.final}

In this final section, we discuss some remarks on eigenvalues and eigenvectors of binomial Cayley graphs. Moreover, we provide a counterexample to show that a nested-kernel property valid for the binomial Cayley graphs considered in this paper is not valid in general. Finally, we present observable and compatible families of probabilities and an example of compatible non-observable family.

\subsection{Eigenvalues of binomial Cayley graphs: a general formula}\label{sec.ugly_formula_general_eigenvalues_binomial} In this paper, we studied the spectra of specific families of binomial Cayley graphs on symmetric groups and on powers of cyclic groups (Theorems \ref{theorem.eigenvalues_binomial_Cayley_graph_Sm} and \ref{theorem.eigenvalues_binomial_Cayley_graph_Zm^n}). Here, we underline that a general formula for the eigenvalues of binomial Cayley graphs $\Gamma_\Bin(G, \omega, k)$ in terms of the eigenvalues of the original weighted normal Cayley graph $\Gamma(G, \omega)$ exists. However, it does not seem to be easily tractable. This motivates the specialised analyses carried out in Sections \ref{section.binomial_Cayley_graphs_on_Sm} and \ref{section.binomial_Cayley_graphs_on_Zm^n}, and the general structure of the paper. \\

Theorem \ref{theorem.spectra_weighted_normal_Cayley_graphs} expresses the eigenvalues $\lambda_\chi$ of the adjacency matrix of a weighted normal Cayley graphs in terms of the scalar products $\langle \omega, \overline{\chi} \rangle$. This can be rephrased from another point of view. Under the normality assumption, $\omega$ is a class function and can then be written as
\begin{equation*}
    \omega = \sum_\chi a_\chi \chi,
\end{equation*}
with the sum running over the irreducible characters $\chi$ of $G$. As these constitute an orthonormal basis of the space of class functions,
\begin{equation*}
    a_\chi = \langle \omega, \chi \rangle = \frac{\chi(\id)}{|G|} \lambda_{\overline{\chi}}.
\end{equation*}
From this, we see that determining the coefficients $a_\chi$ for a class function $\omega$ is equivalent to determining the spectrum of its associated weighted Cayley graph. We express such coefficients for $\omega_k$ in terms of the coefficients of $\omega$. For $i = 0, \dots, k-1$, we define
\begin{equation*}
    a^{-i}_\chi =
    \begin{cases}
        a_\chi & \text{if } \chi \neq \mathbf{1}, \\
        a_\chi - i & \text{if } \chi = \mathbf{1}.
    \end{cases}
\end{equation*}
Here we denote by $\mathbf{1}$ the trivial character $\mathbf{1}(g) = 1$ for all $g \in G$. We recall that
\begin{equation*}
    \binom{\omega(g)}{k} = \frac{\omega(g) \cdot \left(\omega(g) - 1\right) \cdots \left( \omega(g) - k + 1 \right)}{k!}.
\end{equation*}
We can then use the above defined $a^{-i}_\chi$ to determine the decomposition of the factors $\omega - i$ of the binomial coefficient, that is:
\begin{equation*}
    \omega - i = \sum_{\chi} a^{-i}_\chi \chi.
\end{equation*}
Consequently, the coefficients $\alpha_\chi$ of the class function $\omega_k = \sum_\chi \alpha_\chi \chi$ can be expressed as
\begin{equation*}
    \alpha_\chi = \frac{1}{k!} \sum_{\chi_0, \dots, \chi_{k-1}} a^{-0}_{\chi_0} \cdots a^{-(k-1)}_{\chi_{k - 1}} \cdot \langle \chi_0 \cdots \chi_{k-1} , \chi \rangle.
\end{equation*}
This expression depends on the decomposition of the product of irreducible characters as a sum of irreducible characters.

\subsection{Eigenvectors of weighted normal Cayley graphs}\label{sec.eigenvectors_binomial} Let $\mathscr{F}(G, \C)$ be the space of $\C$-valued functions on the group $G$, constituting the nodes of a weighted Cayley graph $\Gamma(G, \omega)$. The \emph{regular representation} $\rho_\mathrm{reg} \colon G \to \mathrm{GL}(\mathscr{F} (G, \C))$ is defined, for $s, g \in G$ and $f \in \mathscr{F}(G, \C)$, as
\begin{equation*}
    (\rho_\mathrm{reg} (s) f) (g) = f(s^{-1} g).
\end{equation*}
Following \cite[Section 1]{Rockmore2002}, we consider the $\C$-linear operator $\mathcal{A} \colon \mathscr{F}(G, \C) \to \mathscr{F}(G, \C)$,
\begin{equation*}
    \mathcal{A} = \sum_{s \in G} \omega(s) \rho_\mathrm{reg} (s).
\end{equation*}
The $\C$-vector space $\mathscr{F}(G, \C)$ has, as $\C$-basis, the set $\{\delta_g\}_{g \in G}$, where
\begin{equation*}
    \delta_g(h) = 
    \begin{cases}
        1 & \text{if } g = h, \\
        0 & \text{if } g \neq h.
    \end{cases}
\end{equation*}
The action of $\mathcal{A}$ on this basis can be computed as
\begin{equation*}
    \mathcal{A}\delta_h (g) = \sum_{s \in G} \omega(s) \delta_h(s^{-1} g) = \omega(gh^{-1}),
\end{equation*}
from which we deduce
\begin{equation*}
    \mathcal{A} \delta_h = \sum_{g \in G} \omega(gh^{-1}) \delta_g.
\end{equation*}
In other words, the transpose of the adjacency matrix $\mathbf{A}(G, \omega)$ of the Cayley graph $\Gamma(G, \omega)$ represents the linear operator $\mathcal{A}$ with respect to the basis $\{\delta_g\}_{g \in G}$ (the transpose appears as $\mathcal{A}$ acts on functions on the graph nodes). As we assume that $\mathbf{A}(G, \omega)$ is symmetric, $\mathbf{A}(G, \omega)$ represents $\mathcal{A}$. When talking about eigenvectors of a Cayley graph, we refer to the eigenfunctions of the linear operator $\mathcal{A}$. For completeness, we mention that a dual alternative approach to identify eigenvectors can be developed for the group algebra $\C[G]$, where the adjacency matrix represents the multiplication by a certain element of $\C[G]$ (see for instance \cite[Section 4]{Roichman1999}). \\

We recall that $\omega$ takes values in $\N$ (in particular, $\omega(g) = \overline{\omega(g)}$ for all $g \in G$), is invariant under inversion and is a class function. Then, for each irreducible character $\chi$ of $G$, Theorem \ref{theorem.spectra_weighted_normal_Cayley_graphs} identifies $\lambda_\chi = \frac{|G|}{\chi(\id)} \langle \omega, \overline{\chi} \rangle$ as an eigenvalue of $\mathbf{A}(G, \omega)$. Its contribute in multiplicity is $\chi(\id)^2$. Let $\rho_\chi \colon G \to \mathrm{GL}_{\chi(\id)}(\C)$ be a matrix irreducible representation -- unique up to isomorphism of representations -- inducing the character $\chi = \mathrm{Tr}(\rho_\chi)$. For each $i, j$ such that $1 \le i, j \le \chi(\id)$, the matrix element $\rho_{\chi, ij} \colon G \to \C$ is an element of $\mathscr{F}(G, \C)$. It turns out that the $\chi(\id)^2$ complex conjugate functions $\overline{\rho_{\chi, ij}}$ are linearly independent eigenfunctions relative to the eigenvalue $\lambda_\chi$ \cite[Theorem 1.1]{Rockmore2002}. We say that these eigenfunctions constitute the contribute of $\chi$ to the eigenspace $V_{\lambda_\chi}$, as different characters might induce the same eigenvalues. In particular, for a given $\lambda$, a basis for the eigenspace $V_{\lambda}$ is given by
\begin{equation*}
    \bigcup_{\chi \colon \lambda_\chi = \lambda}
    \left\{ \ \overline{\rho_{\chi, ij}} \ \right\}_{1 \le i, j \le \chi(\id)}.
\end{equation*}
This description highlights a remarkable property shared by all the weighted normal Cayley graphs on a group $G$: the eigenfunctions above uniquely depend on the group structure $G$ and not on the weight function $\omega$. It follows that, as $k$ varies, each binomial Cayley graph $\Gamma_{\Bin}(G, \omega, k)$ shares the same eigenvectors with the original weighted normal Cayley graph $\Gamma(G, \omega)$. \\

It is worth noticing that the invariance under inversion of $\omega$ implies that $\lambda_{\overline{\chi}} = \lambda_\chi$, and the fact that $\omega$ is real-valued implies that 
$\lambda_{\overline{\chi}} = \overline{\lambda_\chi}$. Combining the two, we deduce that $\chi$ and $\overline{\chi}$ induce the same real eigenvalue. If $\rho_\chi$ induces the character $\chi$, then we might choose $\rho_{\overline{\chi}} = \overline{\rho_\chi}$. Finally, from the discussion above, we see that the eigenspace $\lambda_\chi$ is invariant under complex conjugation, and therefore admits a basis of real valued-functions on $G$, as we expected.

\subsection{Kernels are generally not nested}\label{sec.nested_kernels} In Section \ref{section.binomial_Cayley_graphs_on_Sm} (respectively Section \ref{section.binomial_Cayley_graphs_on_Zm^n}) we derive the expression for the dimension of the kernel of $\A_\Bin(G, \omega, k)$, for $(G, \omega) = (S_m, \FF)$ (respectively $((\Z_m)^n, \ZZ)$). The kernels of these matrices correspond to the kernels of their respective $M^k$, as described in Section \ref{section.degeneracies}. As $k$ increases from $0$ to $m$ (respectively from $0$ to $n$), the dimension of the kernel of $\A_\Bin(G, \omega, k)$ diminishes. Not only this: the kernels are nested, in the sense that
\begin{equation*}
    \ker(M^{k+1}) \subseteq \ker(M^k),
\end{equation*}
for $0 \le k \le m-1$ (respectively $0 \le k \le n-1$). To see this, from the discussion in Section \ref{sec.eigenvectors_binomial} and Corollary \ref{corollary.rank_binomial_Cayley_graphs_Sm}, the kernel of $M^k$ when $(G, \omega) = (S_m, \FF)$ is
\begin{equation*}
    \ker(M^k) = 
    \bigcup_{\substack{\mu \vdash m \\ \mu_1 < m-k}}
    \left\{ \ \overline{\rho_{\chi^\mu, ij}} \ \right\}_{1 \le i, j \le \chi^\mu(\id)}.
\end{equation*}
When instead $(G, \omega) = ((\Z_m)^n, \ZZ)$,
\begin{equation*}
    \ker(M^k) = \bigcup_{
    \substack{\y \in (\Z_m)^n \\ \ZZ(\y) < n-k}}
    \left\{ \ \overline{\rho_{\chi^\y, ij}} \ \right\}_{1 \le i, j \le \chi^\y(\id)}.
\end{equation*}

Alternatively, the inclusion of the kernels follows from the existence of matrices $P_{k+1}^k$ such that $M^k = P_{k+1}^k M^{k+1}$, for $0 \le k \le m-1$ (respectively, $0 \le k \le n-1$). These matrices are indexed by suitable double pairs of indices $(\ii_k, \jj_k), (\ii_{k+1}, \jj_{k+1})$, where $\ii_k, \jj_k$ have length $k$, and $\ii_{k+1}, \jj_{k+1}$ have length $k+1$. The corresponding entry of the matrix $P_{k+1}^k$ has value $C_k$ if $\ii_k \subseteq \ii_{k+1}$ and $\jj_{k+1}$ coincides with $\jj_k$ in correspondence of $\ii_k$, and $0$ otherwise. The value $C_k$ is $1/(m-k)$ for $(G, \omega) = (S_m, \FF)$, and $1/(n-k)$ for $(G, \omega) = ((\Z_m)^n, \ZZ)$.\\

Of course, the nested property is no longer satisfied when $k$ reaches the maximal value of $\omega$, as in this case $M^{k+1}$ -- as well as the adjacency matrix of the associated binomial Cayley graph -- is the zero matrix. We show with an example that, in general, there exist weighted Cayley graphs for which the inclusion $\ker(\A_\Bin(G, \omega, k+1)) \subseteq \ker(\A_\Bin(G, \omega, k))$ fails before $k$ reaches the maximal value of $\omega$.
\begin{example}
Let $G = \Z_4$ and consider the weight function $\omega$ defined by
\begin{equation*}
    \omega(g) =
    \begin{cases}
        4 & \text{if } g = 0, \\
        7 & \text{if } g = \pm 1, \\
        9 & \text{if } g = 2.
    \end{cases}
\end{equation*}
A computation of the determinant of the adjacency matrices of the binomial Cayley graphs $\Gamma_\Bin(G, \omega, k)$ shows that $\A_\Bin(G, \omega, 1) = \A(G, \omega)$ is non-singular, while $\A_\Bin(G, \omega, 2)$ has non-trivial kernel.
\end{example}

\subsection{Observable and compatible families of probabilities}\label{sec.observable-compatible} Assume that $p_{|k}$ is a family of restricted probability measures coming from a probability $p \in \Delta$, that is $p_{|k} = M^k p$ as described in Section \ref{section.degeneracies}. The family $p_{|k}$ satisfies the following compatibility condition when further restricted. For any $s$, $1 \le s \le k$, we denote by $\ii_s \in \binom{\calN}{s}$ and $\jj_s \in \calM^s$ multi-indices with emphasis on their length $s$. If $\ii_s \subseteq \ii_k$, we also denote by ${\jj_k}_{|\ii_s}$ the sub-multi-index of $\jj_k$ of length $s$ corresponding to the indices $\ii_s$. Then, we have that
\begin{equation}\label{eq:compatibility}
    p_{(\ii_s, \jj_s)} = \sum_{\jj_k \colon {\jj_k}_{|\ii_s} = \jj_s} p_{(\ii_k, \jj_k)}
\end{equation}
is independent of the choice of $\ii_k$ containing $\ii_s$. It is natural to ask if a given family of probabilities satisfying this condition coincides in fact with $M^k p$ for some $p \in \Delta$.
\begin{definition}
    We say that a family of probabilities $\{p_{(\ii_k, \cdot)}\}_{\ii_k}$ is \emph{compatible} if the quantity defined in Equation \eqref{eq:compatibility} is independent of $\ii_k$ containing $\ii_s$, for all $\ii_s \in \binom{\calN}{s}$.
\end{definition}
\begin{definition}
    We say that a family of probabilities $\{p_{(\ii_k, \cdot)}\}_{\ii_k}$ is \emph{observable} if there exists $p \in \Delta$ inducing the family through $M^k$, i.e. $M^k p = p_{|k}$.
\end{definition}
It turns out that requiring the family $\{p_{(\ii_k, \cdot)}\}_{\ii_k}$ to be compatible is necessary for it to be observable but not sufficient, as shown in the following example.

\begin{example}\label{ex:compatible-non-observable}
Consider the case $k=2$, $n = 3$ and $m = 2$ with $\Sigma = \Sigma_{\text{all}}$. We denote the particles by the numbers $1, 2$, and $3$, and the boxes by the letters $A$ and $B$. Let $\{p_{(\ii_2, \cdot)}\}_{\ii_2}$ be the family given by
\begin{equation*}
\begin{split}
        p(\ii_2,(A,A)) = \frac{1}{12}, \quad p(\ii_2,(A,B)) = \frac{11}{30},\\
        p(\ii_2,(B,A)) = \frac{11}{30}, \quad p(\ii_2,(B,B)) = \frac{11}{60},
\end{split}
\end{equation*}
where $\ii_2 \in \{(1,2),(1,3),(2,3)\}$. This is a compatible family since for any $\ii_2$ containing element $i=1, 2, 3$ we have
\begin{equation*}
\begin{split}
    p(i,A) &= \frac{1}{12} + \frac{11}{30},\\
    p(i,B) &= \frac{11}{30} +\frac{11}{60}.
\end{split}
\end{equation*}
Furthermore, $M^2$ maps the signed probability measure (i.e. a function in $\mathscr{F}(\Sigma, \R)$)
\begin{equation*}
p^{*} = \left(-\frac{6}{60}, \frac{11}{60}, \frac{11}{60},\frac{11}{60},\frac{11}{60},\frac{11}{60},\frac{11}{60},0 \right)
\end{equation*}
to the family $\{p(\ii_2, \cdot)\}_{\ii_2}$. Following the discussion of Section \ref{section.degeneracies}, we are interested in the kernel of $M^2$ to show there are no probability measures whose image by $M^2$ is this same family. In this case, $M^2$ is
\begin{equation*}
\begin{tabular}{c|c}
        & $\tiny{\begin{matrix} {\hspace{.05cm} AAA\hspace{.05cm} } &{\hspace{.05cm} AAB\hspace{.05cm} }&{\hspace{.05cm} ABA\hspace{.05cm} }&{\hspace{.05cm} ABB\hspace{.05cm} }&{\hspace{.05cm} BAA\hspace{.05cm} } &{\hspace{.05cm} BAB\hspace{.05cm} }&{\hspace{.05cm} BBA\hspace{.05cm} }&{\hspace{.05cm} BBB\hspace{.05cm} } \end{matrix}}$\\ \hline \vspace{-.25cm} \\
       $\tiny{\begin{matrix}
       \vspace{.174cm} 12\mapsto AA\\
       \vspace{.174cm} 12\mapsto AB\\
       \vspace{.174cm} 12\mapsto BA\\
       \vspace{.174cm} 12\mapsto BB\\
       \vspace{.174cm} 13\mapsto AA\\
       \vspace{.174cm} 13\mapsto AB\\
       \vspace{.174cm} 13\mapsto BA\\
       \vspace{.174cm} 13\mapsto BB\\
       \vspace{.174cm} 23\mapsto AA\\
       \vspace{.174cm} 23\mapsto AB\\
       \vspace{.174cm} 23\mapsto BA\\
       \vspace{.174cm} 23\mapsto BB\\
       \vspace{-.46cm}\end{matrix}}$&
    ${\begin{pmatrix}
    \hspace{.28cm}1\hspace{.28cm} &
    \hspace{.28cm}1\hspace{.28cm} &
    \hspace{.28cm}0\hspace{.28cm} &
    \hspace{.28cm}0\hspace{.28cm} &
    \hspace{.28cm}0\hspace{.28cm} &
    \hspace{.28cm}0\hspace{.28cm} &
    \hspace{.28cm}0\hspace{.28cm} &
    \hspace{.28cm}0\hspace{.28cm}\\

    \hspace{.2cm}0\hspace{.2cm} &
    \hspace{.2cm}0\hspace{.2cm} &
    \hspace{.2cm}1\hspace{.2cm} &
    \hspace{.2cm}1\hspace{.2cm} &
    \hspace{.2cm}0\hspace{.2cm} &
    \hspace{.2cm}0\hspace{.2cm} &
    \hspace{.2cm}0\hspace{.2cm} &
    \hspace{.2cm}0\hspace{.2cm}\\

    \hspace{.2cm}0\hspace{.2cm} &
    \hspace{.2cm}0\hspace{.2cm} &
    \hspace{.2cm}0\hspace{.2cm} &
    \hspace{.2cm}0\hspace{.2cm} &
    \hspace{.2cm}1\hspace{.2cm} &
    \hspace{.2cm}1\hspace{.2cm} &
    \hspace{.2cm}0\hspace{.2cm} &
    \hspace{.2cm}0\hspace{.2cm}\\

    \hspace{.2cm}0\hspace{.2cm} &
    \hspace{.2cm}0\hspace{.2cm} &
    \hspace{.2cm}0\hspace{.2cm} &
    \hspace{.2cm}0\hspace{.2cm} &
    \hspace{.2cm}0\hspace{.2cm} &
    \hspace{.2cm}0\hspace{.2cm} &
    \hspace{.2cm}1\hspace{.2cm} &
    \hspace{.2cm}1\hspace{.2cm}\\

    \hspace{.2cm}1\hspace{.2cm} &
    \hspace{.2cm}0\hspace{.2cm} &
    \hspace{.2cm}1\hspace{.2cm} &
    \hspace{.2cm}0\hspace{.2cm} &
    \hspace{.2cm}0\hspace{.2cm} &
    \hspace{.2cm}0\hspace{.2cm} &
    \hspace{.2cm}0\hspace{.2cm} &
    \hspace{.2cm}0\hspace{.2cm}\\

    \hspace{.2cm}0\hspace{.2cm} &
    \hspace{.2cm}1\hspace{.2cm} &
    \hspace{.2cm}0\hspace{.2cm} &
    \hspace{.2cm}1\hspace{.2cm} &
    \hspace{.2cm}0\hspace{.2cm} &
    \hspace{.2cm}0\hspace{.2cm} &
    \hspace{.2cm}0\hspace{.2cm} &
    \hspace{.2cm}0\hspace{.2cm}\\

    \hspace{.2cm}0\hspace{.2cm} &
    \hspace{.2cm}0\hspace{.2cm} &
    \hspace{.2cm}0\hspace{.2cm} &
    \hspace{.2cm}0\hspace{.2cm} &
    \hspace{.2cm}1\hspace{.2cm} &
    \hspace{.2cm}0\hspace{.2cm} &
    \hspace{.2cm}1\hspace{.2cm} &
    \hspace{.2cm}0\hspace{.2cm}\\

    \hspace{.2cm}0\hspace{.2cm} &
    \hspace{.2cm}0\hspace{.2cm} &
    \hspace{.2cm}0\hspace{.2cm} &
    \hspace{.2cm}0\hspace{.2cm} &
    \hspace{.2cm}0\hspace{.2cm} &
    \hspace{.2cm}1\hspace{.2cm} &
    \hspace{.2cm}0\hspace{.2cm} &
    \hspace{.2cm}1\hspace{.2cm}\\

    \hspace{.2cm}1\hspace{.2cm} &
    \hspace{.2cm}0\hspace{.2cm} &
    \hspace{.2cm}0\hspace{.2cm} &
    \hspace{.2cm}0\hspace{.2cm} &
    \hspace{.2cm}1\hspace{.2cm} &
    \hspace{.2cm}0\hspace{.2cm} &
    \hspace{.2cm}0\hspace{.2cm} &
    \hspace{.2cm}0\hspace{.2cm}\\

    \hspace{.2cm}0\hspace{.2cm} &
    \hspace{.2cm}1\hspace{.2cm} &
    \hspace{.2cm}0\hspace{.2cm} &
    \hspace{.2cm}0\hspace{.2cm} &
    \hspace{.2cm}0\hspace{.2cm} &
    \hspace{.2cm}1\hspace{.2cm} &
    \hspace{.2cm}0\hspace{.2cm} &
    \hspace{.2cm}0\hspace{.2cm}\\

    \hspace{.2cm}0\hspace{.2cm} &
    \hspace{.2cm}0\hspace{.2cm} &
    \hspace{.2cm}1\hspace{.2cm} &
    \hspace{.2cm}0\hspace{.2cm} &
    \hspace{.2cm}0\hspace{.2cm} &
    \hspace{.2cm}0\hspace{.2cm} &
    \hspace{.2cm}1\hspace{.2cm} &
    \hspace{.2cm}0\hspace{.2cm}\\

    \hspace{.2cm}0\hspace{.2cm} &
    \hspace{.2cm}0\hspace{.2cm} &
    \hspace{.2cm}0\hspace{.2cm} &
    \hspace{.2cm}1\hspace{.2cm} &
    \hspace{.2cm}0\hspace{.2cm} &
    \hspace{.2cm}0\hspace{.2cm} &
    \hspace{.2cm}0\hspace{.2cm} &
    \hspace{.2cm}1\hspace{.2cm}
\end{pmatrix}}$
        \end{tabular}
\vspace{.15cm}
\end{equation*}
and its kernel can be explicitly computed as
\begin{equation*}
    \ker(M^2) = \langle (1,-1,-1,1,-1,1,1,-1)\rangle.
\end{equation*}
Notice that the dimension of the kernel can also be obtained with the results provided in Section \ref{sec.random_coalescing_maps} and in particular through Equation \eqref{eq:R_all}, giving $m^n - R^{3,2}_2 = 8 - 7 =1$. Now, the first and last entries of any element in $p^*+ \ker M^2$ are given by 
\begin{equation*}
    (p^* + \ker M^2)_1 = -\frac{6}{60} +\alpha, \qquad  (p^* + \ker M^2)_8
    = - \alpha,
\end{equation*}
for some $\alpha \in \R$, showing it is impossible to have the first and last entry simultaneously non-negative. It follows that 
\begin{equation*}
(p^* +  \ker{M^2} )\cap \Delta = \emptyset,
\end{equation*}
hence the family $\{p(\ii_k, \cdot)\}_{\ii_k}$ is not observable.
\end{example}

\begin{remark}
    If a family of probabilities $\{p_{(\ii_k, \cdot)}\}_{\ii_k}$ is in the image via $M^k$ of a signed probability measure, then it is compatible (but not necessarily observable, as shown in Example \ref{ex:compatible-non-observable}).
\end{remark}

\textbf{Acknowledgements.} BBC is supported by the EPSRC Centre for Doctoral Training in Mathematics of Random Systems: Analysis, Modelling and Simulation (EP/S023925/1). FV is supported by the EPSRC (EP/S021590/1), the EPSRC Centre for Doctoral Training in Geometry and Number Theory (LSGNT), University College London, Imperial College London.

\bibliographystyle{plain}
\bibliography{main}

\begin{thebibliography}{10}

\bibitem{Babai1979}
L\'{a}szl\'{o} Babai.
\newblock Spectra of {C}ayley graphs.
\newblock {\em J. Combin. Theory Ser. B}, 27(2):180--189, 1979.

\bibitem{Baxendale1984}
Peter Baxendale.
\newblock Brownian motions in the diffeomorphism group. {I}.
\newblock {\em Compositio Math.}, 53(1):19--50, 1984.

\bibitem{daCosta2022}
Paulo~Henrique da~Costa, Michael~A. H\"{o}gele, and Paulo~R. Ruffino.
\newblock Stochastic n-point {D}-bifurcations of stochastic {L}\'{e}vy flows
  and their complexity on finite spaces.
\newblock {\em Stoch. Dyn.}, 22(7):Paper No. 2240021, 39, 2022.

\bibitem{Diaconis1981}
Persi Diaconis and Mehrdad Shahshahani.
\newblock Generating a random permutation with random transpositions.
\newblock {\em Z. Wahrsch. Verw. Gebiete}, 57(2):159--179, 1981.

\bibitem{Godsil2001}
Chris Godsil and Gordon Royle.
\newblock {\em Algebraic graph theory}, volume 207 of {\em Graduate Texts in
  Mathematics}.
\newblock Springer-Verlag, New York, 2001.

\bibitem{Knuth1970}
Donald~E. Knuth.
\newblock Permutations, matrices, and generalized {Y}oung tableaux.
\newblock {\em Pacific J. Math.}, 34:709--727, 1970.

\bibitem{Kunita1990}
Hiroshi Kunita.
\newblock {\em Stochastic flows and stochastic differential equations},
  volume~24 of {\em Cambridge Studies in Advanced Mathematics}.
\newblock Cambridge University Press, Cambridge, 1990.

\bibitem{Macdonald1995}
Ian~G. Macdonald.
\newblock {\em Symmetric functions and {H}all polynomials}.
\newblock Oxford Mathematical Monographs. The Clarendon Press, Oxford
  University Press, New York, second edition, 1995.

\bibitem{Qian2023}
Chengyang Qian, Yaokun Wu, and Yanzhen Xiong.
\newblock Inclusion matrices for rainbow subsets.
\newblock {\em Bull. Iran. Math. Soc.}, 50:1--65, 12 2023.

\bibitem{Robinson1938}
G.~de~B. Robinson.
\newblock On the {R}epresentations of the {S}ymmetric {G}roup.
\newblock {\em Amer. J. Math.}, 60(3):745--760, 1938.

\bibitem{Rockmore2002}
Dan Rockmore, Peter Kostelec, Wim Hordijk, and Peter~F. Stadler.
\newblock Fast {F}ourier transform for fitness landscapes.
\newblock {\em Appl. Comput. Harmon. Anal.}, 12(1):57--76, 2002.

\bibitem{Roichman1999}
Yuval Roichman.
\newblock Characters of the symmetric groups: formulas, estimates and
  applications.
\newblock In {\em Emerging applications of number theory ({M}inneapolis, {MN},
  1996)}, volume 109 of {\em IMA Vol. Math. Appl.}, pages 525--545. Springer,
  New York, 1999.

\bibitem{Sagan1990}
Bruce~E. Sagan and Richard~P. Stanley.
\newblock Robinson-{S}chensted algorithms for skew tableaux.
\newblock {\em J. Combin. Theory Ser. A}, 55(2):161--193, 1990.

\bibitem{Schensted1961}
C.~Schensted.
\newblock Longest increasing and decreasing subsequences.
\newblock {\em Canadian J. Math.}, 13:179--191, 1961.

\bibitem{Stanley1999}
Richard~P. Stanley.
\newblock {\em Enumerative combinatorics. {V}ol. 2}, volume~62 of {\em
  Cambridge Studies in Advanced Mathematics}.
\newblock Cambridge University Press, Cambridge, 1999.

\end{thebibliography}

\end{document}